%% file: main.tex
\documentclass[letterpaper, oneside]{amsart}

\usepackage[english]{babel}
\usepackage[utf8]{inputenc}
\usepackage[T1]{fontenc}
\usepackage{fancyhdr}
\usepackage{csquotes}
\usepackage{cite}

\usepackage{amssymb}
\usepackage{amsmath}
\usepackage{nicefrac}
\usepackage{bbm}
\usepackage{amsfonts}
\usepackage{tensor}
\usepackage{amsthm}
\usepackage{mathrsfs} 
\usepackage{mathtools}
\usepackage{extpfeil}
\usepackage{tikz-cd}
\usepackage{bm}
\usepackage{bbold}
\usepackage[mathscr]{euscript}

\usepackage{color}
\usepackage{graphicx}
\usepackage{wrapfig}
\usepackage{subfigure}
\usepackage{placeins}

\usepackage{multicol}
\usepackage{ltxtable}
\usepackage{url}
\usepackage{geometry}
\usepackage{paralist}
\usepackage{verbatim}
\usepackage{hyperref}
\usepackage{glossaries}
\usepackage{float}
\usepackage{nameref}
\usepackage{array}
\usepackage{marginnote}
\usepackage{adjustbox}
\usepackage{blindtext}
\usepackage{todonotes}
\usetikzlibrary{decorations.pathmorphing} 
\usepackage{quiver}

\newcommand{\field}{\mathbb{k}}

\DeclareMathOperator{\map}{Map}

\DeclareMathOperator{\alg}{Alg}
\DeclareMathOperator{\e}{End}
\DeclareMathOperator{\lmod}{LMod}

\DeclareMathOperator{\ch}{Ch}

\DeclareMathOperator{\sym}{\textnormal{Sym}}
\DeclareMathOperator{\an}{\mathscr{S}}

\DeclareMathOperator{\defm}{\textnormal{Def}}

\DeclareMathOperator{\modu}{\textnormal{Mod}}
\DeclareMathOperator{\fmp}{\textnormal{\textbf{FMP}}}
\DeclareMathOperator{\spc}{\textnormal{Spc}}
\newcommand{\obar}{\textnormal{Bar}}
\newcommand{\cobar}{\textnormal{Cobar}}
\newcommand{\ccoalg}{\textnormal{CoAlg}^{\textnormal{conil}}}
\newcommand{\coalg}{\textnormal{CoAlg}}
\newcommand{\lie}{\mathbf{Lie}}
\newcommand{\art}{\alg^{\textnormal{Art}}_{\mathscr{P}}}
\newcommand{\faut}{\widehat{\mathcal{A}\textnormal{ut}}}

\newcommand{\nccoassoc}{\mathbf{coAssoc}^{\textnormal{nc}}}

\newcommand{\nuassoc}{\mathbf{Assoc}^{\textnormal{nu}}}

\newcommand{\dgcomm}{\mathbf{Com}}
\newcommand{\nucomm}{\mathbf{Com}^{\textnormal{nu}}}
\newcommand{\dgcocomm}{\mathbf{coCom}}
\newcommand{\nccocomm}{\mathbf{coCom}^{\textnormal{nc}}}

\renewcommand{\hom}{\textnormal{Hom}}

\DeclareMathOperator{\otimesC}{\otimes_{\mathscr{P}^{\text{!`}}}}

\newcounter{intro}

\newtheorem{theorem}{Theorem}
\newtheorem{Itheorem}[intro]{Theorem}
\newtheorem{lemma}[theorem]{Lemma}
\newtheorem{proposition}[theorem]{Proposition}
\newtheorem{corollary}[theorem]{Corollary}

\theoremstyle{definition}
\newtheorem{definition}[theorem]{Definition}

\newtheorem{example}[theorem]{Example}

\theoremstyle{remark}
\newtheorem{remark}[theorem]{Remark}
\newtheorem*{remark*}{Remark}

\newcommand{\displaypar}{%
  \par\addvspace{\abovedisplayskip}%
}
\title{The center is the formal internal automorphism group}
\author[S. Farr]{Sonja M. Farr}
\address{Max Planck Institute for Mathematics, Bonn, Germany}
\email{farr@mpim-bonn.mpg.de}

\begin{document}
\begin{abstract}
We prove that the deformation functor associated to an algebra over a sufficiently nice Koszul operad can be lifted to an operadic formal moduli problem over the Koszul dual operad. We further show that this operadic formal moduli problem corresponds to the derived center of the algebra under the operadic Lurie-Pridham correspondence. As an application, we prove that the classical Gerstenhaber algebra structure on polyvector fields of a commutative algebra agrees with the derived center of the algebra as an abelian Poisson algebra.
\end{abstract} 
\maketitle
\tableofcontents
\section{Introduction} \label{introduction}
\input{introduction}
\section{Background} \label{background}
\input{background}
\section{The formal automorphism group} \label{the_formal_automorphism_group}
\input{the_formal_automorphism_group}
\appendix
\section{Compatibility of Lie additivity and Dunn additivity}\label{appendixA}
\input{appendix}
\bibliographystyle{alpha}
\inputencoding{utf8}    
\bibliography{main}
\end{document}

%% file: introduction.tex
It is well-known that the Hochschild cochain complex of an associative algebra $A$ controls its deformation theory. More precisely, the shifted truncated Hochschild cochain complex $C^{\geq 1}(A,A)[1]$ is a dg Lie algebra which under the Lurie-Pridham correspondence yields the formal neighborhood of $A$ in the moduli space of associative algebras.\displaypar
The Hochschild cochain complex carries additional algebraic structure in the form of a homotopy commutative cup product, making $C^{\ast}(A,A)$ into an $\mathbb{E}_2$-algebra in the $\infty$-category of cochain complexes. One can interpret this additional algebraic structure in two ways. On the one hand, by \cite[Theorem A]{FARR}, one can identify the Hochschild cochain complex with the derived center of $A$, where $A$ is regarded as an $\mathbb{E}_1$-algebra in cochain complexes. The derived center carries a canonical $\mathbb{E}_1$-algebra structure in the $\infty$-category of $\mathbb{E}_1$-algebras in cochain complexes, and thus, by Lurie's Dunn additivity theorem, an $\mathbb{E}_2$-algebra structure. On the other hand, J. Francis showed in \cite{FR} that the full Hochschild cochain complex corresponds to a formal moduli problem that can be lifted to an $\mathbb{E}_2$-formal moduli problem, and therefore obtains a natural $\mathbb{E}_2$-algebra structure. In fact, J. Lurie showed in \cite{DAGX} that these two $\mathbb{E}_2$-algebra structures agree. \displaypar
In this paper, we show that this relationship between the deformation theory of an algebra and its center extends to a large class of algebras over Koszul operads. Let $\mathscr{P}$ be a Koszul operad and $A$ a $\mathscr{P}$-algebra satisfying the connectivity assumptions of Section \ref{the_formal_automorphism_group}. The formal neighborhood of $A$ in the moduli space of $\mathscr{P}$-algebras is a commutative formal moduli problem called the deformation problem of $A$. Its associated formal group is the formal automorphism group of $A$. The Lie algebra of this formal group, which is the dg Lie algebra associated to the deformation problem of $A$ under the Lurie-Pridham correspondence, is called the operadic tangent complex of $A$. \displaypar
To produce an operadic lift of this picture, we use the theory of operadic formal moduli problems developed in \cite{CCN}, and in particular the operadic Lurie-Pridham correspondence \cite[Theorem 1.1]{CCN} for the shifted Koszul dual operad $\mathscr{P}^!\{-1\}$
\begin{align*}
    \Psi_{\mathscr{P}^!\{-1\}}: \alg_{\mathscr{P}\{1\}}(\modu_{\field}) \xrightarrow{\simeq} \fmp_{\mathscr{P}^!\{-1\}}.
\end{align*} 
If the Koszul dual cooperad $\mathscr{P}^{\text{!`}}$ is equipped with a so-called admissible Hopf counital structure, the $\infty$-category of $\mathscr{P}$-algebras, and thus also the $\infty$-category of $\mathscr{P}^!\{-1\}$-formal moduli problems, admits a symmetric monoidal structure. We prove in Proposition \ref{prop1} that in this case, the $\infty$-category of $\mathscr{P}$-algebras admits all centers and centralizers.\displaypar
We define a formal $\mathscr{P}^!\{-1\}$-group to be an $\mathbb{E}_1$-algebra in $\fmp_{\mathscr{P}^!\{-1\}}$. In Section \ref{the_formal_automorphism_group_1}, we construct an enhancement of the formal automorphism group of $A$ as a functor
\begin{align*}
    \underline{\faut}_{\mathscr{P}^!\{-1\}}(A): \alg^{\text{Art}}_{\mathscr{P}^!\{-1\}} \rightarrow \an
\end{align*}
on $\mathscr{P}^!\{-1\}$-Artin algebras. We call this the formal internal automorphism group of $A$. Our main result is the following.
\begin{Itheorem}[Theorem \ref{thm21}]
The formal internal automorphism group is a formal $\mathscr{P}^!\{-1\}$-group. Under the operadic Lurie-Pridham correspondence, the underlying $\mathscr{P}^!\{-1\}$-formal moduli problem corresponds to the shifted centralizer $\mathfrak{Z}(\textnormal{id}_A)[-1]\in \alg_{\mathscr{P}\{1\}}(\modu_{\field})$ of the identity on $A$. The formal $\mathscr{P}^!\{-1\}$-group structure corresponds to the canonical $\mathbb{E}_1$-algebra structure on the shifted center $\mathfrak{Z}(A)[-1]$.
\end{Itheorem}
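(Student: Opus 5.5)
The plan is to identify the formal internal automorphism group with a mapping object in $\mathscr{P}$-algebras, using the symmetric monoidal structure on $\alg_{\mathscr{P}}$ coming from the admissible Hopf counital structure on $\mathscr{P}^{\text{!`}}$. For a $\mathscr{P}^!\{-1\}$-Artin algebra $R$, its Koszul dual $\mathfrak{D}(R)$ is (after shifting) a $\mathscr{P}\{1\}$-coalgebra, or dually a $\mathscr{P}$-algebra object $\mathfrak{D}(R)^\vee$. I will construct $\underline{\faut}_{\mathscr{P}^!\{-1\}}(A)(R)$ as the fiber over $\textnormal{id}_A$ of the space of automorphisms of the tensor product $A\otimes R$, viewed as an algebra in the appropriate module category, which reduces to the trivial deformation. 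Equivalently, via the tensor–hom adjunction in $\alg_{\mathscr{P}}$, it is the fiber of $\map_{\alg_{\mathscr{P}}}(\mathfrak{D}(R)^\vee\otimes A, A)\to \map(A,A)$ over $\textnormal{id}_A$.

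\textbf{Step 1: the functor is an operadic formal moduli problem.} I check $\underline{\faut}(A)(\field)\simeq\ast$; this holds because the fiber over the identity is taken at the trivial algebra. Then I check that pullback squares of Artin algebras along small extensions go to pullbacks of spaces. Koszul duality $\mathfrak{D}$ sends such pullbacks to pushouts of coalgebras. The functor $(-)\otimes A$ preserves colimits in each variable, since the monoidal structure is admissible. Finally $\map(-,A)$ turns these pushouts into pullbacks. The connectivity assumptions of Section \ref{the_formal_automorphism_group} are needed here, so that $\mathfrak{D}$ behaves correctly on Artin algebras.

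\textbf{Step 2: identify the corresponding $\mathscr{P}\{1\}$-algebra.} By the definition of the centralizer as the universal object $\mathfrak{Z}(f)$ with $\mathfrak{Z}(f)\otimes A\to A$ extending $f$ (Proposition \ref{prop1} gives existence), we obtain
\[
\map_{\alg_{\mathscr{P}}}(\mathfrak{D}(R)^\vee\otimes A, A)\times_{\map(A,A)}\{\textnormal{id}_A\}\simeq \map_{\alg_{\mathscr{P}}}^{\textnormal{aug}}(\mathfrak{D}(R)^\vee,\mathfrak{Z}(\textnormal{id}_A)).
\]
The right-hand side is exactly the value on $R$ of $\Psi_{\mathscr{P}^!\{-1\}}$ applied to $\mathfrak{Z}(\textnormal{id}_A)[-1]$, by the explicit description of the operadic Lurie–Pridham functor in \cite{CCN}, with the shift $\{1\}$ versus $[-1]$ accounting for suspension. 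Since $\Psi$ is an equivalence, this identifies the underlying formal moduli problem.

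\textbf{Step 3: the group structure.} Composition of automorphisms makes $\underline{\faut}(A)$ an $\mathbb{E}_1$-monoid in $\fmp_{\mathscr{P}^!\{-1\}}$. It is grouplike because every deformation of the identity is invertible, by nilpotence of Artin algebras. I then show that the equivalence of Step 2 intertwines this composition with the $\mathbb{E}_1$-structure on the center $\mathfrak{Z}(A)=\mathfrak{Z}(\textnormal{id}_A)$, which is the endomorphism object of $A$ in the module $\infty$-category. I expect this step to be the main obstacle: one must promote the Step 2 equivalence to a monoidal one, compatibly with $\Psi$, which is itself symmetric monoidal for the induced structures. My first attempt would present both sides as endomorphism algebras, $\e(A)$ in $\lmod_{\alg_{\mathscr{P}}}$ and $\e(\textnormal{id})$ in $\fmp$. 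The Yoneda-type identification of Step 2 is natural in the action of $\alg_{\mathscr{P}}$ on itself, so Lurie's universal property of endomorphism objects should yield an equivalence of $\mathbb{E}_1$-algebras.
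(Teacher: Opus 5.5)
Your proposal is correct and follows essentially the paper's argument. The steps match: representability of $\underline{\faut}$ by the centralizer via its universal property is Proposition \ref{prop4}, which you phrase on the algebra side of the symmetric monoidal Bar--Cobar equivalence. The identification with $\Psi_{\mathscr{P}^!\{-1\}}$ of the shifted centralizer is the computation in Proposition \ref{prop2}. Your matching of composition with the center's $\mathbb{E}_1$-structure through the universal property of the representing object corresponds to the paper's use of Day convolution and the monoidal Yoneda embedding, rather than Lurie's endomorphism objects.

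Three small corrections. The $\mathscr{P}$-algebra you want is $\mathfrak{D}(R)$ itself, with $\textnormal{Bar}(\mathfrak{D}(R))=\delta_{\mathscr{P}^!\{-1\}}(R)$, not a dual of it. Your Step 1 is redundant, since Step 2 already exhibits the functor as $\Psi_{\mathscr{P}^!\{-1\}}$ of an object. The grouplikeness claim is unnecessary, since a formal $\mathscr{P}^!\{-1\}$-group is by definition just an $\mathbb{E}_1$-algebra for a non-Cartesian tensor product.
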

In Section \ref{the_formal_automorphism_group_2}, we prove that our formal internal automorphism group lifts the extended formal automorphism group of $A$ obtained as the loops on the curved deformation problem of $A$. In particular, we show that the underlying Lie algebra of the center of $A$ agrees with the extended operadic tangent complex of $A$.\displaypar
Finally, in Section \ref{the_formal_automorphism_group_3}, we apply this theory to the Lie, associative, and shifted Poisson operads. For the shifted Poisson case, we prove that the center of a $\mathbb{P}_n$-algebra agrees with its convolution center as defined by V. Melani and P. Safronov in \cite{MS}. In particular, we prove the following result resolving a conjecture by Safronov \cite[Remark 1.7]{Saf2}.
\begin{Itheorem}[Corollary \ref{cor3}]
Let $A$ be a smooth finitely presented commutative $\field$-algebra. Consider $A$ as a Poisson algebra with zero bracket. Then its Poisson center $\mathfrak{Z}(A) \in \alg_{\mathbb{E}_1}(\alg_{\mathbb{P}_1}(\modu_{\field})) \simeq \alg_{\mathbb{P}_2}(\modu_{\field})$ is equivalent to the $\mathbb{P}_2$-algebra of polyvector fields
\begin{align*}
    T_{\textnormal{poly}}(A) := \sym_A (\textnormal{Der}_{\field}(A,A)[-1])
\end{align*}
equipped with the zero differential and the Schouten bracket.
\end{Itheorem}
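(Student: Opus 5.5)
The plan is to deduce this from the identification, established in Section \ref{the_formal_automorphism_group_3}, of the center of a $\mathbb{P}_n$-algebra with its convolution center in the sense of Melani and Safronov \cite{MS}, specialized to $n=1$. The convolution center is given by an explicit strict model, and for a smooth algebra with zero bracket that model can be computed by hand.

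\textbf{Step 1: pass to the convolution center.} The center $\mathfrak{Z}(A)$, which lives in $\alg_{\mathbb{E}_1}(\alg_{\mathbb{P}_1}(\modu_{\field}))$, acquires a $\mathbb{P}_2$-structure through two additivity equivalences: Dunn additivity for the $\mathbb{E}_1$-direction, and its Lie/Poisson analogue $\alg_{\mathbb{E}_1}(\alg_{\mathbb{P}_1}) \simeq \alg_{\mathbb{P}_2}$. I would invoke the comparison from Section \ref{the_formal_automorphism_group_3}, together with the compatibility of Lie additivity and Dunn additivity proved in Appendix \ref{appendixA}. This shows that the $\mathbb{P}_2$-algebra obtained this way is equivalent to the Melani--Safronov convolution center of $A$. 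This reduces the problem to the concrete Melani--Safronov model:
\[
\hom_A\bigl(\sym_A(\mathbb{L}_A[1]), A\bigr),
\]
a completed polyvector complex. Its differential is twisted by the bivector $\pi$, its commutative product is the convolution (wedge) product, and its bracket is the Schouten-type bracket.

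\textbf{Step 2: compute the model for smooth $A$ with $\pi=0$.}
\begin{itemize}
\item Since $A$ is smooth and finitely presented, $\mathbb{L}_A \simeq \Omega^1_A$ is a finitely generated projective $A$-module concentrated in degree $0$.
\item Hence $\hom_A(\sym^k_A(\Omega^1_A[1]),A) \cong \wedge^k_A \textnormal{Der}_{\field}(A,A)$, sitting in degree $k$.
\item The completion $\prod_k$ coincides with $\bigoplus_k$, because $\Omega^1_A$ has locally bounded rank, so only finitely many $k$ contribute.
\item The twisting term $[\pi,-]$ vanishes because $\pi=0$, so the differential is zero.
\end{itemize}
On this model the convolution product is the wedge product over $A$, and the bracket is the Schouten bracket extending the commutator of derivations. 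Both follow from the formulas in \cite{MS}. Together these give a strict $\mathbb{P}_2$-isomorphism with $T_{\textnormal{poly}}(A)$.

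\textbf{Main obstacle.} The hard part is Step 1: making sure that the $\mathbb{P}_2$-structure coming from the $\mathbb{E}_1$-algebra structure on $\mathfrak{Z}(A)$ (via Theorem \ref{thm21}) really is the Melani--Safronov one, not merely something equivalent as a complex or as a Lie algebra. I would first identify the underlying shifted Lie algebras. Here I would use the result of Section \ref{the_formal_automorphism_group_2}: the underlying Lie algebra of the center equals the extended operadic tangent complex, which for $\mathbb{P}_1$ is the Poisson cochain complex. I would then treat the commutative (Dunn) direction separately using Appendix \ref{appendixA}.

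Once the structures are matched, formality is automatic. The equivalence is realized by a strict map of models rather than merely on cohomology, so no separate formality argument for $T_{\textnormal{poly}}(A)$ is needed.
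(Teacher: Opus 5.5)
Your overall architecture matches the paper's. The proof of Corollary \ref{cor3} is Theorem \ref{thm3}: under Poisson additivity the center corresponds to the Melani--Safronov center. This is followed by the identification of that center with polyvector fields, which the remark after Corollary \ref{cor3} extends to smooth $A$ using that the Harrison complex is quasi-isomorphic to $\Omega^1_A$. Your Step 2 is that computation.

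The gap is in how you propose to secure Step 1, which you yourself flag as the hard part. First, Appendix \ref{appendixA} is irrelevant here. It concerns Dunn additivity $\alg_{\mathbb{E}_2}\simeq\alg_{\mathbb{E}_1}(\alg_{\mathbb{E}_1})$ and is used only for the associative example. The $\mathbb{P}_2$-structure on $\mathfrak{Z}(A)$ comes solely from Poisson additivity $\alg_{\mathbb{E}_1}(\alg_{\mathbb{P}_1})\simeq\alg_{\mathbb{P}_2}$, so there is no ``Dunn direction.'' Second, and more seriously, matching the underlying Lie algebras first and then treating the commutative product separately cannot produce an equivalence of homotopy $\mathbb{P}_2$-algebras:
\begin{itemize}
\item Corollary \ref{cor2}, combined with the compatibility of Poisson and Lie additivity \cite[Proposition 2.21]{Saf} (not with the appendix), only identifies $((\varphi^{\vee})^!)^{\ast}\mathfrak{Z}(A)$. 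Since $(\varphi^{\vee})^!$ forgets the commutative product, this recovers the Schouten bracket and nothing more.
\item Nothing in Sections \ref{the_formal_automorphism_group_1}--\ref{the_formal_automorphism_group_2} identifies the commutative product on $\mathfrak{Z}(A)$ concretely.
\item Even separate identifications of product and bracket would not give a single map compatible with both and with the coherence data of the $\mathbb{P}_2$-structure.
\end{itemize}
So your argument does not produce the ``strict map of models'' you appeal to at the end.

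The missing ingredient is exactly the content of Theorem \ref{thm3}. Tamarkin's representability result \cite[Proposition 3.2]{Tam4} gives the strict model $\textnormal{Bar}(\hom_{\field}(\textnormal{Bar}(A[1])^+,A)[1])$ for $\mathfrak{Z}(\textnormal{id}_{\textnormal{Bar}(A[1])})$. This pins down the underlying $\text{co}\mathbb{P}_1$-coalgebra, and hence the convolution $\mathbb{P}_1$-structure. Corestricting the universal action to cogenerators then shows that the multiplication induced by composition is the brace multiplication of the Melani--Safronov center. Finally, \cite[Proposition 2.17]{Saf} together with $\textnormal{Br}_{\text{co}\mathbb{P}_1}\simeq\mathbb{P}_2\{-1\}$ \cite{CW} converts brace additivity into Poisson additivity. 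If you cite Theorem \ref{thm3} for Step 1 and drop the appendix and the Lie-then-commutative plan, your argument becomes the paper's.

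In Step 2, make explicit that the polyvector formula is a theorem about the bar-construction model $\hom_{\field}(\textnormal{Bar}(A[1])^+,A)$, not its definition. For non-cofibrant smooth $A$, rerun that comparison directly, using that the Harrison complex of $A$ is quasi-isomorphic to $\Omega^1_A$. Do not transport the Schouten bracket along a cofibrant replacement, since polyvector fields are not functorial along it.
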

\textbf{Conventions.} Throughout, let $\field$ be a field of characteristic 0. All complexes are cochain graded, with differentials of degree $+1$. Constructions naturally written in homological grading are interpreted by reindexing $C^i=C_{-i}$. For a cochain complex $C^\ast$, we denote by $C^\ast[n]$ the cochain complex with $C^k[n]=C^{k+n}$ and differential $d_{C[n]}=(-1)^n d_C$. We denote the 1-category of cochain complexes by $\ch(\field)$, and for a commutative ring $R$, we denote the derived $\infty$-category of $R$-modules by $\modu_R$. We retain the conventional notation $C_\ast^{\mathrm{CE}}$ for Chevalley--Eilenberg chains.\par
The term ``operad'' refers to not necessarily reduced unital symmetric operads. By ``Koszul operad'' we mean a binary quadratic Koszul operad. In particular, Koszul operads are reduced and canonically augmented. For a dg operad $\mathscr{P}$, we denote by $\mathscr{P}\{k\}$ its shift by $k$. In particular, $A$ is a $\mathscr{P}$-algebra
if and only if $A[-k]$ is a $\mathscr{P}\{k\}$-algebra. \par
For a Koszul operad $\mathscr{P}$, we equip the category of conilpotent coalgebras over its Koszul dual cooperad $\mathscr{P}^{\text{!`}}$ with the model structure of B.~Vallette \cite{V}. We denote its weak equivalences by $W_{\mathrm{Kos}}$ and call them ``Koszul weak equivalences''. A coalgebra morphism $f$ belongs to $W_{\mathrm{Kos}}$ iff $\cobar_\kappa(f)$ is a quasi-isomorphism, where $\kappa$ is the canonical Koszul twisting morphism. Cofibrations are degreewise injections, so every coalgebra is cofibrant. The Bar--Cobar adjunction associated to $\kappa$ is a Quillen equivalence between this model category and $\mathscr{P}$-algebras with their usual model structure, whose weak equivalences are quasi-isomorphisms \cite[Theorem~2.2]{V}. When the twisting morphism is clear from the context, we write $\mathrm{Bar}$ and $\mathrm{Cobar}$ for $\mathrm{Bar}_{\kappa}$ and $\mathrm{Cobar}_{\kappa}$, respectively.\par
The $\infty$-category of spaces is denoted by $\an$. 
\displaypar
\textbf{Acknowledgments.} This paper is an expanded version of Chapter 5.4 of the author's Ph.D. dissertation \cite{FARR2}. The techniques used here were largely inspired by D. Tamarkin's paper \textit{The Deformation Complex of a $d$-algebra is a $(d+1)$-algebra} \cite{Tam4}.\par
AI was used to help locate references, review and revise earlier drafts, and work out some of the explicit formulas in the proof of Theorem \ref{thm3}. The author takes full responsibility for the content of this article.\par
The author would like to thank Rok Gregoric and Chris Rogers for helpful remarks, and the Max Planck Institute for Mathematics in Bonn, where this work was carried out, for its hospitality and support.

%% file: background.tex
\subsection{Deformation theory of algebras over Koszul operads}\label{deformation_theory_of_algebras_over_koszul_operads}
Let $\mathscr{P}$ be a dg operad concentrated in non-positive degrees. The functor
\begin{align*}
    \mathcal{M}_{\mathscr{P}}: \alg_{\dgcomm}(\text{Mod}_{\field}^{\text{cn}}) &\rightarrow \an \\
    R &\mapsto \alg_{\mathscr{P}}(\text{Mod}_R)^{\simeq}
\end{align*}
sending a derived affine scheme $R$ to the space of $\mathscr{P}$-algebras in $R$-modules is an fpqc derived $\infty$-stack called the \textbf{moduli space of $\mathscr{P}$-algebras}. Given a $\field$-point $A \in \alg_{\mathscr{P}}(\text{Mod}_{\field})$, one can consider its formal neighborhood in $\mathcal{M}_{\mathscr{P}}$
\begin{align*}
    (\mathcal{M}_{\mathscr{P}})^{\wedge}_A: \alg_{\dgcomm}^{\text{Art}} &\rightarrow \an\\
    R &\mapsto \mathcal{M}_{\mathscr{P}}(R) \times_{\mathcal{M}_{\mathscr{P}}(\field)} \{A\} \simeq \alg_{\mathscr{P}}(\text{Mod}_R)^{\simeq} \times_{\alg_{\mathscr{P}}(\modu_{\field})^{\simeq}} \{A\}.
\end{align*}
This is a formal moduli problem in the sense of \cite{DAGX}. An $R$-point of $(\mathcal{M}_{\mathscr{P}})^{\wedge}_A$ is a so-called $R$-deformation of $A$, i.e. a $\mathscr{P}$-algebra $B$ in $R$-modules together with an equivalence $A \simeq B\otimes_R \field$. We call this formal moduli problem the \textbf{deformation problem of $A$}, and denote it by $\text{Def}^{\mathscr{P}}_A$.\displaypar
By \cite{Hin4}, under the Lurie-Pridham correspondence \cite[Theorem 4.55]{Pr},\cite[Theorem 2.0.2]{DAGX}, this formal moduli problem corresponds to the dg Lie algebra given by the \textbf{operadic tangent complex} of the $\mathscr{P}$-algebra $A$.
\begin{definition}
Let $\mathscr{P}$ be a dg operad. The operadic tangent complex of a $\mathscr{P}$-algebra $A$ is the dg Lie algebra of derived $\mathscr{P}$-derivations of $A$
\begin{align*}
    \mathbb{T}^{\mathscr{P}}_A:= \mathbb{R}\text{Der}^{\mathscr{P}}(A,A).
\end{align*}
\end{definition}
If $\mathscr{P}$ is Koszul, this can be computed by the twisted convolution algebra
\begin{align*}
    \mathbb{T}^{\mathscr{P}}_A \simeq \text{Conv}(\mathscr{P}^{\text{!`}}, \e_A)^{\phi} \cong \left(\map_{\field}(\text{Bar}(A),A), d + [\phi,-]\right),
\end{align*}
with $\phi$ the Maurer-Cartan element in the convolution algebra corresponding to the $\mathscr{P}$-algebra structure on $A$, and $d$ induced by the differentials on $A$ and $\mathscr{P}^{\text{!`}}$. \displaypar
In particular, we have
\begin{align*}
    \mathbb{T}_{(\mathcal{M}_{\mathscr{P}})^{\wedge}_A,A}[-1] \simeq \mathbb{T}^{\mathscr{P}}_A
\end{align*}
as dg Lie algebras.
\begin{example}
Let $\mathscr{P} = \nuassoc$ and $A \in \alg_{\nuassoc}$ a non-unital associative algebra. Then the formal moduli problem of deformations of $A$ as a non-unital associative algebra corresponds to the dg Lie algebra
\begin{align*}
\mathbb{T}^{\nuassoc}_A \simeq \text{Conv}(\nccoassoc\{1\},\e_A)^{\phi} \cong \prod_{n\geq 1} \map_{\field}(A^{\otimes n},A)[1-n]
\end{align*}
with the Hochschild differential. This is just the truncated shifted Hochschild cochain complex of $A$
\begin{align*}
\mathbb{T}^{\nuassoc}_A \simeq C^{\geq 1}(A,A)[1].
\end{align*}
This recovers the fact that Hochschild cohomology governs the deformation theory of associative algebras.
\end{example}
\begin{remark}\label{rmk1}
If $\mathscr{P}$ is a dg operad and $A\in \alg_{\mathscr{P}}(\ch(\field))$, then for any $n$ we have an equivalence of functors
\begin{align*}
    \text{Def}^{\mathscr{P}}_A \simeq \text{Def}^{\mathscr{P}\{-n\}}_{A[n]}.
\end{align*}
Consequently, if $\mathscr{P}$ and $A$ are concentrated in non-positive degrees, then $\text{Def}^{\mathscr{P}\{-n\}}_{A[n]}$ is also a formal moduli problem, which furthermore corresponds to the dg Lie algebra
\begin{align*}
    \mathbb{T}^{\mathscr{P}}_A \simeq \mathbb{T}^{\mathscr{P}\{-n\}}_{A[n]}.
\end{align*}
\end{remark}
\subsection{A reminder on operadic formal moduli problems}
The Lurie-Pridham correspondence establishes an equivalence of $\infty$-categories between formal moduli problems and dg Lie algebras, given by sending a dg Lie algebra to its Maurer-Cartan space functor. This equivalence is an instance of the Koszul duality between $\dgcomm$ and $\lie$. In \cite{DAGX}, Lurie extends this correspondence to an equivalence between $\mathbb{E}_n$-formal moduli problems and $\mathbb{E}_n$-algebras using the Koszul self-duality of $\mathbb{E}_n$. In \cite{CCN}, D. Calaque, R. Campos and J. Nuiten extend this theory to more general Koszul operads. We recall their notion of an operadic formal moduli problem and explain how such problems can be expressed in terms of coalgebras.  \displaypar
Let $\mathscr{P}$ be a reduced dg operad concentrated in non-positive degrees. The full $\infty$-subcategory $\art$ of \textbf{Artin $\mathscr{P}$-algebras} is given by $\mathscr{P}$-algebras $A$ such that 
\begin{itemize}
    \item $H^i(A) = 0$ for $i>0$ and $i\ll 0$,
    \item for all $i$, $H^i(A)$ is finite dimensional, and
    \item each $H^i(A)$ is a nilpotent module over the $H^0(\mathscr{P})$-algebra $H^0(A)$.
\end{itemize}
If $\mathscr{P} = \nucomm$ is the non-unital commutative operad, Artin $\nucomm$-algebras recover precisely the augmentation ideals of usual dg Artin $\field$-algebras.\displaypar
A $\mathscr{P}$-formal moduli problem can then be defined as a functor satisfying the $\mathscr{P}$-analogue of the Schlessinger conditions.
\begin{definition}[Definition 2.11 \cite{CCN}]
A \textbf{$\mathscr{P}$-formal moduli problem} is a functor 
\begin{align*}
    X: \art \rightarrow \an
\end{align*}
such that 
\begin{itemize}
    \item $X(0) \simeq \ast$, where $0$ is the zero $\mathscr{P}$-algebra, and
    \item for any $A\in \art$ and any map $A \rightarrow \field[n]$ with $n\geq 1$, the functor $X$ sends the pullback $A\times_{\field[n]} 0$ to a pullback in $\an$.
\end{itemize}
Let $\fmp_{\mathscr{P}}$ denote the $\infty$-category of $\mathscr{P}$-formal moduli problems.
\end{definition}
Lurie's proof of the Lurie-Pridham correspondence uses the Koszul duality functor for the Lie operad to describe the Maurer-Cartan space functor. For a Koszul operad $\mathscr{P}$ concentrated in non-positive degrees, Calaque-Campos-Nuiten follow an analogous strategy and define a Koszul duality functor
\begin{align*}
    \mathfrak{D}: (\alg_{\mathscr{P}}(\modu_{\field}))^{\text{op}} \xrightarrow{\text{Bar}} (\coalg_{\mathscr{P}^{\text{!`}}}(\ch(\field))[W_{\text{Kos}}^{-1}])^{\text{op}} \xrightarrow{(-)^{\vee}} \alg_{\mathscr{P}^!\{-1\}}(\modu_{\field})
\end{align*}
for $\mathscr{P}$. 
\begin{theorem}[Theorem 1.1 \cite{CCN}]\label{cor1}
Let $\mathscr{P}$ be a Koszul operad concentrated in non-positive degrees. There is an equivalence of $\infty$-categories
\begin{align*}
    \Psi_{\mathscr{P}}: \alg_{\mathscr{P}^!}(\modu_{\field}) &\rightarrow \fmp_{\mathscr{P}}\\
    \mathfrak{g} &\mapsto \map_{\alg_{\mathscr{P}^!}(\modu_{\field})}(\tilde{\mathfrak{D}}(-), \mathfrak{g})
\end{align*}
with $\tilde{\mathfrak{D}} \simeq [-1]\circ \mathfrak{D}$.
\end{theorem}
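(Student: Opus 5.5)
The plan is to follow Lurie's axiomatic strategy from \cite{DAGX} for the Lurie--Pridham correspondence, replacing the pair $(\dgcomm,\lie)$ by $(\mathscr{P},\mathscr{P}^!)$ and taking the Koszul duality functor $\mathfrak{D}$ as the basic input. In Lurie's language, $\art$ together with the objects $\field[n]$ should form a \emph{deformation context}. One then shows that the functor $\tilde{\mathfrak{D}}\colon (\art)^{\mathrm{op}}\to \alg_{\mathscr{P}^!}(\modu_{\field})$ is a \emph{weak deformation theory}, and upgrades it to a deformation theory. Lurie's general theorem then says that $\mathfrak{g}\mapsto \map(\tilde{\mathfrak{D}}(-),\mathfrak{g})$ is an equivalence onto formal moduli problems.

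The first step is to set up the framework and to check that $\Psi_{\mathscr{P}}(\mathfrak{g})$ is a $\mathscr{P}$-formal moduli problem.
\begin{itemize}
\item By construction, $\mathfrak{D}$ is $\obar$ followed by linear duality.
\item $\obar$ is a left Quillen equivalence, so it sends homotopy pushouts to homotopy pushouts. Pullbacks in $\mathscr{P}$-algebras are computed on underlying complexes, and the Artin condition gives finite-dimensional cohomology. The bar construction of a finite-type square should then remain a finite-type square, so dualizing sends the pullback $A\times_{\field[n]}0$ to a pushout of $\mathscr{P}^!$-algebras.
\item Hence $\map(\tilde{\mathfrak{D}}(-),\mathfrak{g})$ sends these pullbacks to pullbacks of spaces, and it sends $0$ to a point because $\tilde{\mathfrak{D}}(0)$ is the initial (zero) algebra.
\item The shift $[-1]$ is exactly what matches $\mathscr{P}^!\{-1\}$ with $\mathscr{P}^!$.
\end{itemize}

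The second step is to identify the generators. For the trivial square-zero algebra $\field[n]$, the bar construction is the cofree conilpotent $\mathscr{P}^{\text{!`}}$-coalgebra on $\field[n+1]$. Its dual, after the shift, is the free $\mathscr{P}^!$-algebra on a one-dimensional generator in degree $-n$. It follows that $\Psi_{\mathscr{P}}(\mathfrak{g})(\field[n])\simeq \Omega^{\infty-n}\mathfrak{g}$, up to a fixed shift. This is the tangent-complex computation, and it shows that $\Psi_{\mathscr{P}}$ is conservative. The functor preserves small limits, and $\alg_{\mathscr{P}^!}(\modu_{\field})$ is presentable, so $\Psi_{\mathscr{P}}$ has a left adjoint.

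The third step, which I expect to be the main obstacle, is to show that the unit and counit are equivalences. This reduces to Lurie's condition that $\tilde{\mathfrak{D}}$ is fully faithful on a generating class and that the free algebras $\tilde{\mathfrak{D}}(\field[n])$ generate under sifted colimits. Concretely, I would prove that for $A$ Artin the unit $A\to \mathfrak{D}\mathfrak{D}(A)$ is an equivalence, i.e.\ biduality for finite-type algebras, using the following argument.
\begin{itemize}
\item Every Artin $\mathscr{P}$-algebra is built from $0$ by finitely many elementary extensions $A'=A\times_{\field[n]}0$ with $n\geq 1$. I would prove this by induction on total cohomological dimension, using Postnikov/square-zero extensions and the nilpotence assumption on $H^0$.
\item Both sides of the unit take these squares to squares, so it suffices to treat $\field[n]$, where the statement is the explicit bar--cobar computation with finite-dimensional pieces.
\end{itemize}
The delicate points are the following.
\begin{itemize}
\item \emph{Convergence.} Dualizing the conilpotent coalgebra must give a complete algebra that agrees with the derived one. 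The non-positivity of $\mathscr{P}$ and the Koszul hypothesis are needed here, to control the weight filtration of $\obar$ degreewise.
\item \emph{Essential surjectivity.} Every formal moduli problem must be a colimit of corepresentables $\map(\tilde{\mathfrak{D}}(A),-)$. This is handled as in \cite[Thm.~1.3.12]{DAGX}, using the cell decomposition of the second step and the fact that $\Psi_{\mathscr{P}}$ commutes with sifted colimits on the relevant objects.
\end{itemize}
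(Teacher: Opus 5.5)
The paper gives no proof of this statement. It quotes it from \cite{CCN} and only remarks that Calaque--Campos--Nuiten follow Lurie's strategy for the Koszul duality functor $\mathfrak{D}$. Your architecture is that route: verify Lurie's deformation-theory axioms for $\mathfrak{D}$, then invoke the abstract equivalence theorem of \cite{DAGX}. Your tangent-complex step is also fine, modulo a sign slip: the generator of $\tilde{\mathfrak{D}}(\field[n])$ must sit in positive degree, namely $n+1$ with the paper's conventions, otherwise the mapping spaces would not detect all cohomology.

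The gap is the single non-formal input, that $\mathfrak{D}$ carries $A\times_{\field[n]}0$ to a pushout, and your argument for it does not work. Since $\obar$ is an equivalence of $\infty$-categories, it sends the square to a pullback in $\ccoalg_{\mathscr{P}^{\text{!`}}}(\ch(\field))[W_{\textnormal{Kos}}^{-1}]$; it is the right Quillen functor, not the left, but that is immaterial. However, limits of conilpotent coalgebras are not computed on underlying complexes; only colimits are. Linear duality turns colimits of coalgebras into limits of algebras, so it tells you nothing about the dual of a pullback of coalgebras, finite type or not. There is also a completion issue. Linear duals of conilpotent coalgebras are complete, product-type algebras; already $\mathfrak{D}(\field[n])$ is a priori a completed free algebra. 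By contrast, $\mathfrak{D}(A)\amalg_{\mathfrak{D}(\field[n])}0$ is a pushout among all $\mathscr{P}^!\{-1\}$-algebras.

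Your third step inherits the gap. The claim ``both sides of the unit take these squares to squares'' is the same pushout property again, so the induction is circular. Likewise, the essential-surjectivity step needs $\Psi_{\mathscr{P}}(\tilde{\mathfrak{D}}(A))\simeq \map_{\art}(A,-)$, i.e.\ full faithfulness of $\tilde{\mathfrak{D}}$ on Artin algebras, which is the same missing statement.

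The fix is to run the logic in the opposite direction, as in \cite{DAGX}.
Work with the left adjoint $\mathfrak{D}'$ of $\mathfrak{D}\colon (\alg_{\mathscr{P}}(\modu_{\field}))^{\mathrm{op}}\to \alg_{\mathscr{P}^!\{-1\}}(\modu_{\field})$. Being a left adjoint into an opposite category, it automatically sends pushouts of $\mathscr{P}^!\{-1\}$-algebras to pullbacks of $\mathscr{P}$-algebras. Let $\mathcal{B}_0$ consist of the algebras obtained from $0$ by finitely many pushouts of the form $K\amalg_{\mathfrak{D}(\field[n])}0$ with $n\geq 1$. A direct computation gives $\mathfrak{D}'\mathfrak{D}(\field[n])\simeq \field[n]$, so by induction $\mathfrak{D}'(K)$ is Artin for every $K\in\mathcal{B}_0$.

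The real content of the theorem is then to prove that the unit $K\to \mathfrak{D}\mathfrak{D}'(K)$ is an equivalence for every $K\in\mathcal{B}_0$, using explicit bar--cobar models and a convergence argument. This is where Koszulness and the non-positivity of $\mathscr{P}$ are actually used. The cells of such $K$ lie in degrees for which each cohomological degree of the relevant bar constructions and free algebras receives contributions from only finitely many weights, so completed and uncompleted constructions agree.

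Once this holds, your smallness lemma gives, formally and by induction along the elementary extensions, three things for every Artin $A$: $\mathfrak{D}(A)\in\mathcal{B}_0$, $A\simeq \mathfrak{D}'\mathfrak{D}(A)$, and $\mathfrak{D}$ carries the elementary pullbacks to pushouts. Your Steps 1 and 3 and the appeal to \cite{DAGX} then go through.
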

Similar to how a commutative formal space can be described by its cocommutative coalgebra of distributions, we want to describe $\mathscr{P}$-formal moduli problems in terms of $\mathscr{P}^{\vee}$-coalgebras. If $R\in \art$ is a strict Artin $\mathscr{P}$-algebra \cite[Definition 2.5]{CCN}, i.e. if it is built by finitely many square zero extensions, then its linear dual $R^{\vee}$ is a conilpotent $\mathscr{P}^{\vee}$-coalgebra. If $R\in \art$ is non-strict, the linear dual need not be a $\mathscr{P}^{\vee}$-coalgebra. Instead, define 
\begin{align*}
\delta_{\mathscr{P}} = \text{Bar}_{\kappa^{\vee}}\circ \mathfrak{D}: \art \rightarrow (\ccoalg_{\mathscr{P}^{\vee}}(\ch(\field))[W_{\text{Kos}}^{-1}])^{\text{op}}
\end{align*}
with $\kappa^{\vee}: \mathscr{P}^{\vee} \rightarrow (\mathscr{P}^{\text{!`}})^{\vee} $ the dual of the Koszul twisting morphism for $\mathscr{P}$, which is the Koszul twisting morphism for $\mathscr{P}^!\{-1\}$. This recovers the linear dual on strict Artin $\mathscr{P}$-algebras. Using \cite[Theorem 5.1 (A)]{CCN} and \cite[Proposition 4.13]{CCN}, one sees that for any Artin $\mathscr{P}$-algebra $R$, we get an equivalence
\begin{align}\label{eq1}
    R \simeq \delta_{\mathscr{P}}(R)^{\vee}.
\end{align}
Thus, any conilpotent $\mathscr{P}^{\vee}$-coalgebra $C$ describes a functor $\spc(C)$ on $\mathscr{P}$-Artin algebras given by 
\begin{align*}
    \spc(C): \art &\rightarrow \an\\
    R &\mapsto \map_{\ccoalg_{\mathscr{P}^{\vee}}(\ch(\field))[W_{\text{Kos}}^{-1}]}(\delta_{\mathscr{P}}(R), C)
\end{align*}
We can use this functor to identify $\mathscr{P}$-formal moduli problems with $\mathscr{P}^{\vee}$-coalgebras.
\begin{proposition}\label{prop2}
The functor 
\begin{align*}
    \spc: \ccoalg_{\mathscr{P}^{\vee}}(\ch(\field))[W_{\textnormal{Kos}}^{-1}] \rightarrow \textnormal{Fun}(\art, \an)
\end{align*}
is fully faithful with essential image the $\mathscr{P}$-formal moduli problems.
\end{proposition}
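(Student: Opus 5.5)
The plan is to write $\spc$ as a composite of functors that are already known to be equivalences or fully faithful, and then to identify the image. The composite is
\begin{align*}
\ccoalg_{\mathscr{P}^{\vee}}(\ch(\field))[W_{\textnormal{Kos}}^{-1}] \xrightarrow{\cobar_{\kappa^{\vee}}} \alg_{\mathscr{P}^!\{-1\}}(\modu_{\field}) \xrightarrow{[1]} \alg_{\mathscr{P}^!}(\modu_{\field}) \xrightarrow{\Psi_{\mathscr{P}}} \fmp_{\mathscr{P}} \hookrightarrow \textnormal{Fun}(\art,\an).
\end{align*}
The first functor is the Bar--Cobar Quillen equivalence of Vallette for the Koszul operad $\mathscr{P}^!\{-1\}$; its Koszul dual cooperad is $\mathscr{P}^{\vee}$ up to the relevant shift, and its twisting morphism is $\kappa^{\vee}$. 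So this functor is an equivalence of $\infty$-categories. The second functor is the shift equivalence from the conventions. The third functor is Theorem \ref{cor1}. Hence the whole composite is fully faithful with essential image $\fmp_{\mathscr{P}}$. It therefore suffices to produce a natural equivalence between $\spc(C)$ and $\Psi_{\mathscr{P}}(\cobar_{\kappa^{\vee}}(C)[1])$.

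To build that equivalence, fix $R \in \art$ and compute
\begin{align*}
\spc(C)(R) &= \map(\delta_{\mathscr{P}}(R), C) = \map(\obar_{\kappa^{\vee}}\mathfrak{D}(R), C) \\
&\simeq \map_{\alg_{\mathscr{P}^!\{-1\}}}(\mathfrak{D}(R), \cobar_{\kappa^{\vee}} C)\\
&\simeq \map_{\alg_{\mathscr{P}^!}}(\tilde{\mathfrak{D}}(R), \cobar_{\kappa^{\vee}}(C)[1]).
\end{align*}
The first equivalence uses the Bar--Cobar equivalence, in which $\obar_{\kappa^{\vee}}$ is inverse to $\cobar_{\kappa^{\vee}}$ on the localized $\infty$-categories. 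The second uses $\tilde{\mathfrak{D}} \simeq [-1]\circ\mathfrak{D}$, where the shift is read as the equivalence $\alg_{\mathscr{P}^!\{-1\}} \simeq \alg_{\mathscr{P}^!}$ that shifts the underlying complex by one. Every step is natural in $R$ and in $C$, so we obtain a natural equivalence $\spc \simeq \Psi_{\mathscr{P}}\circ[1]\circ\cobar_{\kappa^{\vee}}$.

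The main obstacle is bookkeeping of shifts and twisting morphisms. First, one must check that the coalgebra model category on which $\obar_{\kappa^{\vee}}$ lands, namely conilpotent coalgebras over the Koszul dual cooperad of $\mathscr{P}^!\{-1\}$, is exactly $\ccoalg_{\mathscr{P}^{\vee}}$ with the same Koszul weak equivalences. Second, the shift in $\tilde{\mathfrak{D}}$ must match the direction $\mathscr{P}^!\{-1\} \to \mathscr{P}^!$ used in Theorem \ref{cor1}. If these signs disagree, the fix is to absorb the discrepancy into the definition of the intermediate shift functor; this is harmless, since every shift is an equivalence.

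One further point needs care: Vallette's theorem requires $\mathscr{P}^!\{-1\}$ to be Koszul, which holds because $\mathscr{P}$ is Koszul. The identification $R \simeq \delta_{\mathscr{P}}(R)^{\vee}$ from \eqref{eq1} is not needed for the proof itself. It only explains why $\spc(C)$ extends the naive functor $R \mapsto \hom(R^{\vee}, C)$ on strict Artin algebras.
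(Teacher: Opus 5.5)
Your proposal is correct and follows the paper's proof essentially verbatim. It uses the same chain $\spc(C)(R)\simeq \map(\mathfrak{D}(R),\cobar_{\kappa^{\vee}}(C))\simeq \map(\tilde{\mathfrak{D}}(R),\cobar_{\kappa^{\vee}}(C)[-1])\simeq \Psi_{\mathscr{P}}(\cobar_{\kappa^{\vee}}(C)[-1])(R)$, and concludes because $\cobar_{\kappa^{\vee}}$, the shift and $\Psi_{\mathscr{P}}$ are equivalences. The only slip is the sign you flagged yourself: since $\tilde{\mathfrak{D}}\simeq[-1]\circ\mathfrak{D}$, the equivalence $\alg_{\mathscr{P}^!\{-1\}}(\modu_{\field})\simeq\alg_{\mathscr{P}^!}(\modu_{\field})$ is $[-1]$ and must be applied to both arguments, so the representing object is $\cobar_{\kappa^{\vee}}(C)[-1]$ rather than $\cobar_{\kappa^{\vee}}(C)[1]$, which by the paper's conventions would be a $\mathscr{P}^!\{-2\}$-algebra.
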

\begin{proof}
Direct computation yields
\begin{align*}
    \spc(C)(R) &= \map_{\ccoalg_{\mathscr{P}^{\vee}}(\ch(\field))[W_{\textnormal{Kos}}^{-1}]}(\delta_{\mathscr{P}}(R),C) \\&\simeq \map_{\alg_{\mathscr{P}^!\{-1\}}(\modu_{\field})}(\mathfrak{D}(R), \text{Cobar}_{\kappa^{\vee}}(C)) \\&\simeq \map_{\alg_{\mathscr{P}^!}(\modu_{\field})}(\tilde{\mathfrak{D}}(R), \text{Cobar}_{\kappa^{\vee}}(C)[-1]) \\&\simeq \Psi_{\mathscr{P}}(\text{Cobar}_{\kappa^{\vee}}(C)[-1])(R).
\end{align*}
Since the Cobar construction and $\Psi_{\mathscr{P}}$ are equivalences, the result follows.
\end{proof}
\begin{remark}
For the dg operad $\dgcomm$ of unital commutative algebras and a dg Artin $\field$-algebra $R\in \alg_{\dgcomm}^{\text{Art}}$, we denote by $\delta_{\dgcomm}(R) := \delta_{\nucomm}(\mathfrak{m}_R)^+$ the coaugmented counital cocommutative coalgebra corresponding to the non-counital cocommutative coalgebra $\delta_{\nucomm}(\mathfrak{m}_R)$, where $\mathfrak{m}_R$ is the augmentation ideal of $R$.
\end{remark}
\subsection{Formal groups and operadic formal groups}\label{formal_groups_and_operadic_formal_groups}
The $\infty$-category of commutative formal moduli problems $\fmp$ is equipped with the Cartesian symmetric monoidal structure. We can hence define a formal group to be a group object in $\fmp^{\times}$. In  \cite[Chapter 7.3]{GR2}, D. Gaitsgory and N. Rozenblyum construct the Lie algebra of such a formal group. They also prove \cite[Chapter 5, Theorem 2.3.2]{GR2} that the loops functor
\begin{align*}
    \Omega: \fmp \rightarrow \text{Grp}(\fmp)
\end{align*}
is an equivalence of $\infty$-categories. By \cite[Appendix B]{FARR2}, the Lie algebra associated to a formal moduli problem $X$ under the Lurie-Pridham correspondence is precisely the Lie algebra of the formal group $\Omega X$.\displaypar
The commutative Lurie-Pridham correspondence is symmetric monoidal when equipping the $\infty$-category of Lie algebras with the direct sum symmetric monoidal structure. For more general operads, we want to consider more general symmetric monoidal structures. 
\begin{definition}[Definition 1.12 \cite{Saf}]\label{def2}
Let $\mathscr{C}$ be a dg cooperad. Let $\mathscr{C}^{\text{cu}}$ be the symmetric sequence that agrees with $\mathscr{C}$ in arities $n>0$, and satisfies $\mathscr{C}^{\text{cu}}(0) = \mathscr{C}(0) \oplus \field$. A \textbf{Hopf counital structure} on $\mathscr{C}$ is a Hopf cooperad structure on the dg cooperad $\mathscr{C}^{\text{cu}}$ such that the projection $\mathscr{C}^{\text{cu}} \rightarrow \mathscr{C}$ is a morphism of cooperads and such that the unit (of the algebra structure with respect to the Hadamard tensor product) on $\mathscr{C}^{\text{cu}}(0)$ is given by the inclusion into the second factor.
\end{definition}
If $\mathscr{C}$ admits a Hopf counital structure, then by \cite[Lemma 1.14]{Saf}, the category $\ccoalg_{\mathscr{C}}(\ch(\field)) \simeq \coalg^{\text{coaug}}_{\mathscr{C}^{\text{cu}}}(\ch(\field))$ inherits the structure of a symmetric monoidal category. We call a Hopf counital structure on a dg cooperad $\mathscr{C}$ \textbf{admissible} if the tensor product on $\coalg^{\text{coaug}}_{\mathscr{C}^{\text{cu}}}(\ch(\field))$ preserves Koszul weak equivalences in both variables.\displaypar 
If $\mathscr{P}$ is a Koszul operad and its Koszul dual cooperad $\mathscr{P}^{\text{!`}}$ is equipped with an admissible Hopf counital structure, the equivalence of $\infty$-categories
\begin{center}
    \begin{tikzcd}
             \alg_{\mathscr{P}}(\ch(\field))[W^{-1}] \arrow[r, shift left=1ex, "\text{Bar}"{name=G}] &\ccoalg_{\mathscr{P}^{\text{!`}}}(\ch(\field))[W_{\text{Kos}}^{-1}]\arrow[l, shift left=.5ex, "\text{Cobar}"{name=F}]
            \arrow[phantom, from=F, to=G, , "\scriptscriptstyle\boldsymbol{\simeq}" rotate=0]
        \end{tikzcd}
\end{center}
makes $\alg_{\mathscr{P}}(\modu_{\field}) \simeq \alg_{\mathscr{P}}(\ch(\field))[W^{-1}]$ into a symmetric monoidal $\infty$-category. We denote by $\otimesC$ the tensor product on $\ccoalg_{\mathscr{P}^{\text{!`}}}(\ch(\field))[W_{\text{Kos}}^{-1}]$.\displaypar
Through the equivalence $\spc: \ccoalg_{\mathscr{P}^{\text{!`}}}(\ch(\field))[W_{\text{Kos}}^{-1}] \xrightarrow{\simeq} \fmp_{\mathscr{P}^!\{-1\}}$, we can also equip $\fmp_{\mathscr{P}^!\{-1\}}$ with a symmetric monoidal structure. By construction, this makes the operadic Lurie-Pridham correspondence
\begin{align*}
    \Psi_{\mathscr{P}^!\{-1\}}: \alg_{\mathscr{P}\{1\}}(\modu_{\field}) \rightarrow \fmp_{\mathscr{P}^!\{-1\}}
\end{align*}
symmetric monoidal, where $\alg_{\mathscr{P}\{1\}}(\modu_{\field})$ is equipped with the shifted symmetric monoidal structure.
\begin{definition}
A \textbf{formal $\mathscr{P}^!\{-1\}$-group} is an $\mathbb{E}_1$-algebra object in $\fmp_{\mathscr{P}^!\{-1\}}$.
\end{definition}
The data of a formal $\mathscr{P}^!\{-1\}$-group structure on a $\mathscr{P}^!\{-1\}$-formal moduli problem $\Psi_{\mathscr{P}^!\{-1\}}(\mathfrak{g})$ is equivalent to the data of an $\mathbb{E}_1$-algebra structure in $\mathscr{P}$-algebras on the $\mathscr{P}\{1\}$-algebra $\mathfrak{g}$, which in turn is equivalent to an $\mathbb{E}_1$-algebra structure on the $\mathscr{P}$-algebra $\mathfrak{g}[1]$. 
\subsection{Centers and centralizers}
We give a brief overview of Lurie's theory of centers developed in \cite[Section 5.2]{HA}. In addition, we prove that if $\mathscr{P}^{\text{!`}}$ is equipped with a Hopf counital structure, the $\infty$-category $\alg_{\mathscr{P}}(\modu_{\field})$ admits all centers and centralizers. Let $\mathcal{M}$ be an $\infty$-category left tensored over a monoidal $\infty$-category $\mathcal{C}^{\otimes}$. 
\begin{definition}\label{def1}
Let $M \in \mathcal{M}$, and $L_M: \mathcal{C}_{\mathbb{1}/} \rightarrow \mathcal{M}_{M/}$ be given by tensoring with $\text{id}_M$. A \textbf{centralizer} $\mathfrak{Z}(f)$ of a morphism $f: M \rightarrow N$ in $\mathcal{M}$ is a final object of the \textbf{action $\infty$-category of $f$}
\begin{align*}
    \text{Act}(f) := \mathcal{C}_{\mathbb{1}/} \times_{\mathcal{M}_{M/}} (\mathcal{M}_{M/})_{/f}.
\end{align*}
\end{definition}
Thus, a centralizer of $f$ is a universal object $\mathfrak{Z}(f)$ equipped with an action $\alpha: \mathfrak{Z}(f) \otimes M \rightarrow N$ that reduces to $f$ on the unit. If $f = \text{id}_M: M \rightarrow M$ is an identity morphism, then the composition action
\begin{align*}
    \mathfrak{Z}(\text{id}_M) \otimes \mathfrak{Z}(\text{id}_M) \otimes M \xrightarrow{\text{id}_{\mathfrak{Z}(\text{id}_M)} \otimes \alpha} \mathfrak{Z}(\text{id}_M) \otimes M \xrightarrow{\alpha} M
\end{align*}
equips the centralizer $\mathfrak{Z}(\text{id}_M)$ with a multiplication $\mathfrak{Z}(\text{id}_M) \otimes \mathfrak{Z}(\text{id}_M) \rightarrow \mathfrak{Z}(\text{id}_M)$. One can obtain this additional algebraic structure more directly by defining 
\begin{definition}
A \textbf{center} $\mathfrak{Z}(M)$ of an object $M\in \mathcal{M}$ is a final object of the $\infty$-category
\begin{align*}
    \lmod(\mathcal{M}) \times_{\mathcal{M}} \{M\}.
\end{align*}
\end{definition}
In particular, a center $\mathfrak{Z}(M)$ is an $\mathbb{E}_1$-algebra object in $\mathcal{C}$, together with a universal unital action $\mathfrak{Z}(M) \otimes M \rightarrow M$. The $\mathbb{E}_1$-algebra structure on a center is consistent with the above described algebraic structure on a centralizer of an identity map.
\begin{proposition}[Proposition 5.3.1.8 \cite{HA}]
Let $M\in \mathcal{M}$, and suppose there exists a centralizer $\mathfrak{Z}(\text{id}_M)$. Then there exists a center $\mathfrak{Z}(M)$, and a lift of $M$ to a left module over an $\mathbb{E}_1$-algebra $A$ in $\mathcal{C}$ exhibits $A$ as a center of $M$ if and only if the action map $A \otimes M \rightarrow M$ exhibits $A$ as a centralizer of the identity $\text{id}_M$.
\end{proposition}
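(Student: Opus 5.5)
The plan is to reduce the statement to a general fact about final objects in monoidal $\infty$-categories. We would do this by realizing the action $\infty$-category $\text{Act}(\text{id}_M)$ as a monoidal $\infty$-category whose algebra objects are exactly left module structures on $M$. Concretely, following Lurie's construction of endomorphism objects \cite[Section 4.7.1]{HA}, we would consider the $\infty$-category $\mathcal{C}[M]$ of pairs $(X, \alpha\colon X\otimes M \to M)$. It carries a monoidal structure: the unit is $(\mathbb{1}, \text{id}_M)$ and the tensor product is composition of actions,
\begin{align*}
(X,\alpha)\otimes(Y,\beta) = \bigl(X\otimes Y,\ X\otimes Y\otimes M \xrightarrow{\text{id}_X\otimes\beta} X\otimes M\xrightarrow{\alpha} M\bigr),
\end{align*}
and the forgetful functor $\mathcal{C}[M]\to\mathcal{C}$ is monoidal. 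Comparing with Definition \ref{def1}, $\text{Act}(\text{id}_M)$ is the unital variant $\mathcal{C}[M]\times_{\mathcal{C}}\mathcal{C}_{\mathbb{1}/}$: its objects additionally carry a unit $\mathbb{1}\to X$ and a homotopy identifying the restriction of $\alpha$ to $\mathbb{1}\otimes M$ with $\text{id}_M$. This variant inherits a monoidal structure in the same way, since $\mathcal{C}_{\mathbb{1}/}$ is monoidal.

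The key steps, in order, are as follows.

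First, we would construct this monoidal structure rigorously, as a coCartesian fibration of $\infty$-operads over $\mathbf{Assoc}^{\otimes}$. We would build it from the left-tensored structure $\mathcal{M}$ over $\mathcal{C}^{\otimes}$, i.e. from the $\mathcal{LM}^{\otimes}$-monoidal $\infty$-category, by taking the fiber over $M$ of the appropriate $\infty$-category of operad maps, exactly as in Lurie's treatment of endomorphism objects.

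Second, we would identify $\lmod(\mathcal{M})\times_{\mathcal{M}}\{M\}$ with $\alg(\text{Act}(\text{id}_M))$, compatibly with the forgetful functors to $\mathcal{C}$. In words, an $\mathbb{E}_1$-algebra $A$ with a left action on $M$ is the same thing as an algebra object in the monoidal $\infty$-category of unital actions on $M$. Unwinding definitions, both sides classify maps of $\infty$-operads $\mathcal{LM}^{\otimes}\to\mathcal{M}^{\otimes}$ sending the module color to $M$. Unitality of the module action on the left corresponds to working with the unital variant $\text{Act}(\text{id}_M)$ on the right; here we would invoke the universal property of endomorphism objects \cite[Section 4.7.1]{HA}.

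Third, we would use the general fact that if a monoidal $\infty$-category $\mathcal{D}^{\otimes}$ has a final object $T$, then $\alg(\mathcal{D})$ has a final object, and the forgetful functor $\alg(\mathcal{D})\to\mathcal{D}$ both preserves and detects final objects. The reason is that for any algebra $B$, the mapping space $\map_{\alg(\mathcal{D})}(B,T)$ is computed as a limit of mapping spaces into tensor powers of $T$. Each of these is contractible, since $T^{\otimes n}$ receives a map from $T$ and the final object is a retract-type limit. This is the content of \cite[Corollary 3.2.2.5]{HA}.

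Applying the third step with $\mathcal{D}=\text{Act}(\text{id}_M)$ gives the conclusion. The hypothesis provides a final object $\mathfrak{Z}(\text{id}_M)$, so it acquires an essentially unique algebra structure that is final in $\alg(\text{Act}(\text{id}_M))\simeq\lmod(\mathcal{M})\times_{\mathcal{M}}\{M\}$. This final object is a center $\mathfrak{Z}(M)$. Moreover, since the forgetful functor detects final objects, a module lift of $M$ over $A$ is final, that is, a center, if and only if its underlying action $A\otimes M\to M$ is final in $\text{Act}(\text{id}_M)$, that is, exhibits $A$ as a centralizer of $\text{id}_M$. This is precisely the claimed equivalence.

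The main obstacle is the first two steps: making the composition-of-actions monoidal structure on $\text{Act}(\text{id}_M)$ coherent, and proving the equivalence $\lmod(\mathcal{M})\times_{\mathcal{M}}\{M\}\simeq\alg(\text{Act}(\text{id}_M))$ while correctly tracking units. We would handle this by working entirely at the level of $\infty$-operads over $\mathcal{LM}^{\otimes}$ and quoting Lurie's endomorphism-object machinery, rather than building the monoidal structure by hand.
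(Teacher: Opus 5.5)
The paper does not prove this statement; it quotes it from \cite{HA}. Your outline follows the standard argument behind Lurie's proof: the monoidal $\infty$-category $\mathcal{C}[M]$ of \cite[\S 4.7.1]{HA}, the identification of its algebras with $\lmod(\mathcal{M})\times_{\mathcal{M}}\{M\}$, passage to the undercategory of the unit, and \cite[Corollary 3.2.2.5]{HA}. The strategy is sound, but three details need correcting.

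\textbf{The action category.} $\textnormal{Act}(\mathrm{id}_M)$ is not $\mathcal{C}[M]\times_{\mathcal{C}}\mathcal{C}_{\mathbb{1}/}$. That fiber product forgets the homotopy $\alpha\circ(u\otimes\mathrm{id}_M)\simeq\mathrm{id}_M$. It also has no final object unless $\map_{\mathcal{M}}(M,M)$ is contractible: map the objects $(\mathbb{1},\beta,\mathrm{id}_{\mathbb{1}})$, with $\beta\colon M\to M$ arbitrary, into a putative final object. Your verbal description is the correct one. Namely, $\textnormal{Act}(\mathrm{id}_M)\simeq\mathcal{C}[M]_{\mathbf{1}_M/}$, where $\mathbf{1}_M=(\mathbb{1},\mathrm{id}_M)$ is the monoidal unit of $\mathcal{C}[M]$, and it is monoidal as the undercategory of that unit.

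\textbf{Tracking units.} The endomorphism-object theorem already gives $\alg(\mathcal{C}[M])\simeq\lmod(\mathcal{M})\times_{\mathcal{M}}\{M\}$ with unital modules. So \emph{unitality of the module} is not what the passage to $\textnormal{Act}(\mathrm{id}_M)$ accounts for. What that passage needs is that $\mathbf{1}_M$ is initial in $\alg(\mathcal{C}[M])$ \cite[\S 3.2.1]{HA}, whence $\alg(\mathcal{C}[M]_{\mathbf{1}_M/})\simeq\alg(\mathcal{C}[M])_{\mathbf{1}_M/}\simeq\alg(\mathcal{C}[M])$. This step is not cosmetic. $\mathfrak{Z}(\mathrm{id}_M)$ is final in $\mathcal{C}[M]_{\mathbf{1}_M/}$ but need not be final in $\mathcal{C}[M]$, where a final object would be an endomorphism object.

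\textbf{The final-object lemma.} Your justification is wrong as stated. Tensor powers $T^{\otimes n}$ formed in $\mathcal{D}$ need not be final; for instance, in $\textnormal{Act}(\mathrm{id}_M)$ the object $\mathfrak{Z}(\mathrm{id}_M)^{\otimes 2}$ is final only if the multiplication is an equivalence. They are also not what enters the mapping-space computation. The correct reason has three parts:
\begin{itemize}
\item $(T,\dots,T)$ is final in the fiber $\mathcal{D}^{\otimes}_{\langle n\rangle}\simeq\mathcal{D}^{n}$;
\item fiberwise final objects of the coCartesian fibration $\mathcal{D}^{\otimes}\to\mathbb{E}_1^{\otimes}$ are relative final objects;
\item the resulting final section sends inert morphisms to inert morphisms.
\end{itemize}
This is exactly what the corollary you cite packages, so the citation itself is the right one.
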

Taking $\mathcal{M} = \mathcal{C}$ as left tensored over itself, we may simply speak of centralizers and centers in $\mathcal{C}$.\displaypar
To apply these constructions to $\mathscr{P}$-algebras, we need to equip the $\infty$-category of $\mathscr{P}$-algebras with a monoidal structure. While this is automatic for $\infty$-operads, for a dg operad $\mathscr{P}$, the $\infty$-category of $\mathscr{P}$-algebras in $\modu_{\field}$ does not automatically inherit a monoidal structure. It can however be supplied by a Hopf counital structure on the Koszul dual cooperad $\mathscr{P}^{\text{!`}}$.\displaypar
We prove that in this situation, the $\infty$-category of $\mathscr{P}$-algebras admits all centers and centralizers.
\begin{proposition}\label{prop1}
Let $\mathscr{P}$ be a Koszul operad and let $\mathscr{P}^{\text{!`}}$ be equipped with an admissible Hopf counital structure. Then any morphism  in $\alg_{\mathscr{P}}(\modu_{\field})$ admits a centralizer.
\end{proposition}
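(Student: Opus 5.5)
The strategy is to use Lurie's existence criterion for centralizers: if $\mathcal{C}$ is a presentable monoidal $\infty$-category whose tensor product preserves small colimits separately in each variable, then $\mathcal{C}$ is closed monoidal, and every morphism $f: M \to N$ has a centralizer. In that case $\mathfrak{Z}(f)$ is obtained from the internal mapping object. It is the fiber product of the internal hom $\underline{\map}(M,N)$ with the unit, taken over the point $f$, so that $\mathfrak{Z}(f)$ represents pairs consisting of an object $X \in \mathcal{C}_{\mathbb{1}/}$ and an action $X\otimes M \to N$ restricting to $f$ (cf. \cite[Section 5.3.1]{HA}). So it suffices to show two things about $\alg_{\mathscr{P}}(\modu_{\field})$ with the symmetric monoidal structure transported from $\otimesC$: it is presentable, and the tensor product preserves colimits in each variable.

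Presentability is standard. $\alg_{\mathscr{P}}(\ch(\field))$ is a combinatorial model category, and its $\infty$-categorical localization $\alg_{\mathscr{P}}(\modu_{\field})$ is therefore presentable. Through the Bar--Cobar Quillen equivalence this transfers to $\ccoalg_{\mathscr{P}^{\text{!`}}}(\ch(\field))[W_{\text{Kos}}^{-1}]$, which is also the localization of a combinatorial model category (Vallette's model structure). I would therefore carry out the argument on the coalgebra side, where the monoidal structure is defined directly.

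The main step is colimit preservation of $C\otimesC -$. I would argue at the model-categorical level. Colimits in $\ccoalg_{\mathscr{P}^{\text{!`}}}$ are computed on underlying complexes, since the forgetful functor to chain complexes is a left adjoint to the cofree conilpotent coalgebra functor. By \cite[Lemma 1.14]{Saf}, the tensor product of coaugmented $\mathscr{P}^{\text{!`},\text{cu}}$-coalgebras has underlying complex the tensor product of the underlying coaugmented complexes. Under the identification of conilpotent coalgebras with coaugmented ones, $C\otimesC D$ then has underlying non-counital complex $C\oplus D\oplus C\otimes D$. This functor commutes with filtered colimits and with pushouts in each variable, but it does not preserve coproducts, because the coproduct is not the tensor unit. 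So the correct statement is the pointed/augmented one: $C\otimesC -$ preserves colimits of diagrams in the coaugmented category. I would handle this by working in the coaugmented category $\coalg^{\text{coaug}}_{\mathscr{P}^{\text{!`},\text{cu}}}$ throughout, where coproducts are fibre sums over the unit and $\otimes$ distributes over them. Every coalgebra is cofibrant and admissibility makes $\otimes$ preserve Koszul weak equivalences, so the point-set tensor product is already left derived. Homotopy colimits can be computed by filtered colimits and homotopy pushouts along cofibrations, both of which are preserved degreewise. Hence the induced functor on the $\infty$-category preserves all small colimits in each variable.

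Having established this, the adjoint functor theorem gives internal homs, and the standard centralizer construction produces $\mathfrak{Z}(f)$ for any $f$. I expect the main obstacle to be the colimit-preservation step: checking that the formula for the underlying complex of the Hopf tensor product interacts correctly with colimits, especially coproducts versus the coaugmentation, and that passing from the 1-category to the localization retains this property. Admissibility is exactly the hypothesis that makes the latter work.
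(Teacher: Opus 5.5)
There is a genuine gap, and it is the step you flagged as the main obstacle. The criterion you invoke (tensor product preserving all small colimits in each variable, hence closed monoidal, hence centralizers from internal homs) cannot apply here. The tensor unit of $\otimesC$ is $0$, i.e. the coaugmented coalgebra $\field$, and it is also the initial object. A colimit-preserving $C\otimesC -$ would have to send it to the initial object, but $C\otimesC 0\cong C$, which is not initial unless $C=0$. You compute this yourself on coproducts: $C\otimesC(D_1\sqcup D_2)$ contains one copy of $C$, while $(C\otimesC D_1)\sqcup(C\otimesC D_2)$ contains two.

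Passing to coaugmented $(\mathscr{P}^{\text{!`}})^{\textnormal{cu}}$-coalgebras does not repair this. It is an equivalent category with the same coproducts, namely $D_1^+\sqcup D_2^+=\field\oplus\bar D_1\oplus\bar D_2$. The functor $C^+\otimes -$ still duplicates $\bar C$ on such coproducts, so your claim that $\otimes$ distributes over them is false. Hence the monoidal structure is not closed, there is no internal hom, and your final step (adjoint functor theorem giving internal homs, then the standard construction) has nothing to apply to.

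The missing idea is that, by Definition~\ref{def1}, a centralizer only requires a right adjoint to $L_M:\mathcal{C}_{\mathbb{1}/}\to\mathcal{C}_{M/}$, $X\mapsto X\otimes M$. You do not need a right adjoint to $-\otimes M$ as an endofunctor of $\mathcal{C}$.

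Colimits in these slices are generated by the initial object, pushouts and filtered colimits, the latter two computed in $\mathcal{C}$; a coproduct in $\mathcal{C}_{M/}$ is a pushout over $M$. The functor $L_M$ sends the initial object $\textnormal{id}_{\mathbb{1}}$ to the initial object $\textnormal{id}_M$. It also preserves pushouts and filtered colimits, which are exactly the colimits you already checked. So $L_M$ preserves all small colimits and has a right adjoint by presentability. Its value on $f$ is a final object of $\textnormal{Act}(f)$, i.e. the centralizer.

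This is precisely the paper's proof. The paper considers $l_A$, whose underlying functor is $C\mapsto \textnormal{Bar}(A)\oplus C\oplus\textnormal{Bar}(A)\otimes C$ with target $\ch(\field)_{\textnormal{Bar}(A)/}$. It shows that $l_A$ preserves colimits and is left Quillen on the slice model structures: cofibrations are injections, and admissibility handles weak equivalences. It then uses Lemma~\ref{lem2} to identify the slices of the localization with the localizations of the slices. Finally, it obtains a final object of $L_A\downarrow\textnormal{Bar}(f)\simeq\textnormal{Act}(\textnormal{Bar}(f))$ from \cite[Proposition 5.2.4.2]{HTT}.
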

To be more precise, we will prove the following. Let $A,B \in \alg_{\mathscr{P}}(\modu_{\field})$, and let $f: A \rightarrow B$ be a morphism of $\mathscr{P}$-algebras. Then the induced morphism $\textnormal{Bar}(f): \textnormal{Bar}(A) \rightarrow \textnormal{Bar}(B)$ admits a centralizer $\mathfrak{Z}(\textnormal{Bar}(f))$ in the symmetric monoidal $\infty$-category $\ccoalg_{\mathscr{P}^{\textnormal{!`}}}(\ch(\field))[W_{\textnormal{Kos}}^{-1}]$. 
\begin{lemma}\label{lem2}
There is an equivalence of $\infty$-categories
\begin{align*}
(\ccoalg_{\mathscr{P}^{\textnormal{!`}}}(\ch(\field))_{\textnormal{Bar}(A)/})[W_{\textnormal{Kos}}^{-1}] \xrightarrow{\simeq} (\ccoalg_{\mathscr{P}^{\textnormal{!`}}}(\ch(\field))[W_{\textnormal{Kos}}^{-1}])_{\textnormal{Bar}(A)/},
\end{align*}
where $\ccoalg_{\mathscr{P}^{\textnormal{!`}}}(\ch(\field))_{\textnormal{Bar}(A)/}$ is equipped with the slice category model structure.
\end{lemma}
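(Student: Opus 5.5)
We reduce to the general fact that for a model category $\mathcal{M}$ and an object $X$ of $\mathcal{M}$, the canonical comparison functor $(\mathcal{M}_{X/})[W^{-1}] \to (\mathcal{M}[W^{-1}])_{X/}$ is an equivalence whenever $X$ is cofibrant. This is the $\infty$-categorical statement of the invariance of under-categories; see Lurie's treatment of slices of underlying $\infty$-categories of model categories, or the dual of the standard result for over-categories of fibrant objects. We apply this with $\mathcal{M} = \ccoalg_{\mathscr{P}^{\textnormal{!`}}}(\ch(\field))$ carrying Vallette's model structure and $X = \textnormal{Bar}(A)$. By the conventions in the introduction, cofibrations are the degreewise injections, so every coalgebra is cofibrant; in particular $\textnormal{Bar}(A)$ is cofibrant, and the slice model structure on $\mathcal{M}_{\textnormal{Bar}(A)/}$ is well defined, with weak equivalences, fibrations and cofibrations detected after forgetting to $\mathcal{M}$.

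We first construct the comparison functor. The forgetful functor $\mathcal{M}_{X/} \to \mathcal{M}$ sends weak equivalences to weak equivalences, so we obtain $\mathcal{M}_{X/} \to \mathcal{M}[W^{-1}]$ together with a natural transformation from the constant functor at $X$. This yields a functor $\mathcal{M}_{X/} \to (\mathcal{M}[W^{-1}])_{X/}$. It inverts weak equivalences, since equivalences in an under-$\infty$-category are detected in the underlying $\infty$-category. It therefore factors through $(\mathcal{M}_{X/})[W^{-1}]$.

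Next we prove that this functor is an equivalence, checking essential surjectivity and full faithfulness separately.

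\textbf{Essential surjectivity.} An object of $(\mathcal{M}[W^{-1}])_{X/}$ is a morphism $X \to Y$ in $\mathcal{M}[W^{-1}]$. Since $X$ is cofibrant, we may take a fibrant replacement $Y \to RY$. The morphism $X \to RY$ is then represented by a strict map, because $\pi_0 \map(X,RY)$ is the set of homotopy classes of strict maps $X \to RY$.

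\textbf{Full faithfulness.} We compare mapping spaces using the standard description of the mapping space in an under-category as a fiber:
\begin{align*}
\map_{(\mathcal{M}[W^{-1}])_{X/}}(Y,Z) \simeq \map(Y,Z) \times_{\map(X,Z)} \{\ast\}.
\end{align*}
To compute the left side, we replace the object $X \to Y$ by a cofibration $X \to Y'$ (a cofibrant object of the slice) and replace $Z$ by a fibrant object. The mapping space in the slice model category is then computed via a cosimplicial resolution of $Y'$ relative to $X$. Since $X \to Y'$ is a cofibration and $Z$ is fibrant, the restriction map $\map(Y',Z) \to \map(X,Z)$ is a Kan fibration, so its strict fiber computes the homotopy fiber. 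This identifies the two sides.

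The main obstacle is making this comparison of mapping spaces rigorous. Rather than carry it out by hand, we expect to cite the general result, for instance Lurie's statement that for a combinatorial model category and a cofibrant object $X$, the underlying $\infty$-category of the slice $\mathcal{M}_{X/}$ is equivalent to $N(\mathcal{M}^{c})[W^{-1}]_{X/}$. This requires Vallette's model structure to be combinatorial, or at least to admit functorial factorizations. Vallette's model structure is cofibrantly generated on a locally presentable category, so we expect this hypothesis to hold, though it should be checked against the reference.
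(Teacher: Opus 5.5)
Your proposal is correct and is essentially the paper's argument. The paper also uses only that every object of the Koszul model structure is cofibrant, and then cites the general slice-invariance result, namely Cisinski's Corollary 7.6.13 applied to the fibrant object $\textnormal{Bar}(A)$ of the opposite model category. That result needs no combinatoriality hypothesis, so citing it spares you both the combinatoriality check and the hand-made mapping-space comparison you sketch.
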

\begin{proof}
Every object in the Koszul model structure is cofibrant \cite[Theorem 3.11(2)]{DCH}. We can thus apply \cite[Corollary 7.6.13]{C} to the opposite model category, in which $\textnormal{Bar}(A)$ is fibrant. Taking opposites yields the asserted equivalence.
\end{proof}
\begin{proof}[Proof of Proposition \ref{prop1}]
Let $A\in \alg_{\mathscr{P}}(\ch(\field))$ and consider the functor
\begin{align*}
l_A: \ccoalg_{\mathscr{P}^{\textnormal{!`}}}(\ch(\field))_{0/} \rightarrow \ccoalg_{\mathscr{P}^{\textnormal{!`}}}(\ch(\field))_{\text{Bar}(A)/}\\
(0 \rightarrow S) \mapsto (\text{Bar}(A) \rightarrow S\otimesC \text{Bar}(A)).
\end{align*}
given by tensoring with $\text{id}_{\text{Bar}(A)}$. Colimits in $\ccoalg_{\mathscr{P}^{\textnormal{!`}}}(\ch(\field))$ are created by the forgetful functor to $\ch(\field)$, and similarly colimits in $\ccoalg_{\mathscr{P}^{\textnormal{!`}}}(\ch(\field))_{\text{Bar}(A)/}$ are created by the forgetful functor to $\ch(\field)_{\text{Bar}(A)/}$. The underlying functor on complexes of $l_A$ is given by 
\begin{align*}
\ch(\field)_{0/} &\rightarrow \ch(\field)_{\text{Bar}(A)/}\\
(0 \rightarrow C) &\mapsto (\text{Bar}(A) \rightarrow \text{Bar}(A) \oplus C \oplus \text{Bar}(A)\otimes C).
\end{align*}
This functor preserves colimits, since the direct sum and tensor product of complexes do. In particular, $l_A$ preserves colimits. Since $\ch(\field)$ is locally presentable, and $\ccoalg_{\mathscr{P}^{\textnormal{!`}}}(\ch(\field))$ is given by coalgebras over an accessible comonad, it is again locally presentable. Thus, $l_A$ admits a right adjoint.\par
Since cofibrations in $\ccoalg_{\mathscr{P}^{\textnormal{!`}}}(\ch(\field))_{0/}$ and $\ccoalg_{\mathscr{P}^{\textnormal{!`}}}(\ch(\field))_{\text{Bar}(A)/}$ respectively are given by degreewise injections in the underlying complexes, $l_A$ preserves cofibrations. By the admissibility assumption, it also preserves weak equivalences, and is therefore left Quillen. By the above lemma, we thus get an induced adjunction of $\infty$-categories
\begin{center}
    \begin{tikzcd}
             (\ccoalg_{\mathscr{P}^{\textnormal{!`}}}(\ch(\field))[W_{\text{Kos}}^{-1}])_{0/} \arrow[r, shift left=1ex, "L_A"{name=G}] &(\ccoalg_{\mathscr{P}^{\textnormal{!`}}}(\ch(\field))[W_{\text{Kos}}^{-1}])_{\text{Bar}(A)/}\arrow[l, shift left=.5ex, ""{name=F}]
        \end{tikzcd}.
\end{center}
Since every object of $\ccoalg_{\mathscr{P}^{\textnormal{!`}}}(\ch(\field))$ is cofibrant, the left adjoint $L_A$ is again given by tensoring with $\text{id}_{\text{Bar}(A)}$. By \cite[Proposition 5.2.4.2]{HTT}, for every object $c: \text{Bar}(A) \rightarrow C$ in $(\ccoalg_{\mathscr{P}^{\textnormal{!`}}}(\ch(\field))[W_{\text{Kos}}^{-1}])_{\text{Bar}(A)/}$, the comma $\infty$-category $L_A \downarrow c$ admits a final object. Taking $c=\text{Bar}(f): \text{Bar}(A) \rightarrow \text{Bar}(B)$, we therefore get that
\begin{align*}
    L_A &\downarrow \text{Bar}(f) \\&\simeq (\ccoalg_{\mathscr{P}^{\textnormal{!`}}}(\ch(\field))[W_{\text{Kos}}^{-1}])_{0/} \times_{(\ccoalg_{\mathscr{P}^{\textnormal{!`}}}(\ch(\field))[W_{\text{Kos}}^{-1}])_{\text{Bar}(A)/}} ((\ccoalg_{\mathscr{P}^{\textnormal{!`}}}(\ch(\field))[W_{\text{Kos}}^{-1}])_{\text{Bar}(A)/})_{/\text{Bar}(f)} \\&\simeq \text{Act}(\text{Bar}(f))
\end{align*}
admits a final object, and hence $\text{Bar}(f)$ admits a centralizer.
\end{proof}

%% file: the_formal_automorphism_group.tex
Let $\mathscr{P}$ be a Koszul operad, let $A\in \alg_{\mathscr{P}}(\ch(\field))$ a $\mathscr{P}$-algebra, and assume that there exists an $n$  such that $\mathscr{P}\{n\}$ and the $\mathscr{P}\{n\}$-algebra $A[-n]$ are both concentrated in non-positive degrees.\displaypar
Recall from Section \ref{deformation_theory_of_algebras_over_koszul_operads} that the deformation problem ${\defm}_A$ is the formal neighborhood $(\mathcal{M}_{\mathscr{P}})^{\wedge}_A$ of $A$ in the moduli space $\mathcal{M}_{\mathscr{P}}$ of $\mathscr{P}$-algebras. As described in \ref{formal_groups_and_operadic_formal_groups}, the associated dg Lie algebra of ${\defm}_A$ is the Lie algebra of the formal group $\Omega (\mathcal{M}_{\mathscr{P}})^{\wedge}_A$. This formal group can be identified with $(\Omega_A \mathcal{M}_{\mathscr{P}})^{\wedge}_A$, the formal completion of the loop space at the constant loop at $A$. The derived $\infty$-stack $\Omega_A \mathcal{M}_{\mathscr{P}}$ can be identified as the moduli space $\mathcal{A}\text{ut}_{\mathscr{P}}(A)$ of $\mathscr{P}$-algebra automorphisms of $A$,
\begin{align*}
    \mathcal{A}\text{ut}_{\mathscr{P}}(A): \alg_{\dgcomm}({\modu_{\field}}^{\text{cn}}) &\rightarrow \mathscr{S}\\
    R &\mapsto \text{Aut}_{\alg_{\mathscr{P}}(\modu_R)}(R\otimes A).
\end{align*}
Consequently, the completion $(\Omega_A \mathcal{M}_{\mathscr{P}})^{\wedge}_A$ sends a dg Artin $\field$-algebra $R$ to the \textbf{formal automorphism group}
\begin{align*}
    \faut_{\mathscr{P}}(A)(R) :=  \text{Aut}_{\alg_{\mathscr{P}}(\modu_R)}(R\otimes A) \times_{\text{Aut}_{\alg_{\mathscr{P}}({\modu_{\field}})}(A)} \{\text{id}_A\}.
\end{align*}
For $R\in \alg^{\text{Art}}_{\dgcomm}$, we have $R\otimes A \simeq \map_{\field}(\delta_{\dgcomm}(R), A)$ by (\ref{eq1}), and therefore, using the Bar-Cobar equivalence,
\begin{align*}
\map_{\alg_{\mathscr{P}}(\modu_R)}(R\otimes A, R\otimes A) &\simeq \map_{\alg_{\mathscr{P}}({\modu_{\field}})}(A, R\otimes A)\\&\simeq \map_{\alg_{\mathscr{P}}(\modu_{\field})}(A, \map_{\field}(\delta_{\dgcomm}(R), A)) \\&\simeq \map_{\ccoalg_{\mathscr{P}^{\text{!`}}}(\ch(\field))[W_{\text{Kos}}^{-1}]}(\delta_{\dgcomm}(R)\otimes \text{Bar}(A),\text{Bar}(A))
\end{align*}
where in the last step we use the theory of twisting morphisms as in \cite[Proposition 11.3.2]{LV}. \displaypar
Thus, $\faut_{\mathscr{P}}(A)$ is the restriction along $\delta_{\dgcomm}$ of the functor
\begin{align*}
    (\text{CoAlg}^{\text{coaug}}_{\dgcocomm}[W_{\text{Kos}}^{-1}])^{\text{op}} &\rightarrow \mathscr{S}\\
    S \mapsto \map_{\ccoalg_{\mathscr{P}^{\text{!`}}}[W_{\text{Kos}}^{-1}]}(S\otimes \text{Bar}(A),\text{Bar}(A))&\times_{\map_{\ccoalg_{\mathscr{P}^{\text{!`}}}[W_{\text{Kos}}^{-1}]}(\text{Bar}(A),\text{Bar}(A))}\{\text{id}_{\obar(A)}\}.
\end{align*}
\subsection{The formal internal automorphism group}\label{the_formal_automorphism_group_1}
For the remainder of this section, assume that $\mathscr{P}^{\text{!`}}$ admits an admissible Hopf counital structure as defined in \ref{def2}. In this situation, one can construct an ``internal'' version of the formal automorphism group
\begin{align*}
    \underline{\faut}_{\mathscr{P}}(A): (\ccoalg_{\mathscr{P}^{\text{!`}}}(\ch(\field))[W_{\text{Kos}^{-1}}])^{\text{op}} &\rightarrow \mathscr{S}\\
    S \mapsto \map_{\coalg_{\mathscr{P}^{\text{!`}}}(\ch(\field))[W_{\text{Kos}}^{-1}]}(S\otimesC \text{Bar}(A),\text{Bar}(A))&\times_{\map_{\coalg_{\mathscr{P}^{\text{!`}}}(\ch(\field))[W_{\text{Kos}}^{-1}]}(\text{Bar}(A),\text{Bar}(A))}\{\text{id}_{\obar(A)}\}.
\end{align*}
The following result is the principal link between the center and the deformation theory of $A$.
\begin{proposition}\label{prop4}
The functor $\underline{\faut}_{\mathscr{P}}(A)$ is representable, with representing object given by the centralizer of $\textnormal{id}_{\textnormal{Bar}(A)}$ in $\ccoalg_{\mathscr{P}^{\textnormal{!`}}}(\ch(\field))[W_{\textnormal{Kos}^{-1}}]$. 
\end{proposition}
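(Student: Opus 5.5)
We write $\mathcal{C} := \ccoalg_{\mathscr{P}^{\textnormal{!`}}}(\ch(\field))[W_{\textnormal{Kos}}^{-1}]$ and $M := \textnormal{Bar}(A)$. The plan is to compare $\underline{\faut}_{\mathscr{P}}(A)$ directly with the mapping spaces out of the final object of the action $\infty$-category $\textnormal{Act}(\textnormal{id}_M)$, whose existence is guaranteed by Proposition \ref{prop1}.

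First I would rewrite the functor using the comma category $L_M \downarrow \textnormal{id}_M$ from the proof of Proposition \ref{prop1}. The unit of $\otimesC$ is the zero coalgebra $0$ (the coaugmented counital coalgebra $\field$). It is both initial and final in $\mathcal{C}$, since it corresponds under Bar--Cobar to the zero $\mathscr{P}$-algebra. Hence $\mathcal{C}_{0/} \simeq \mathcal{C}$, and every $S$ carries a unique point $0 \to S$. Applying $L_M$ gives the map $M \simeq 0 \otimesC M \to S \otimesC M$. The mapping space of $(\mathcal{C}_{M/})_{/\textnormal{id}_M}$-type objects over $S$ is then the space of maps $\alpha: S\otimesC M \to M$ in $\mathcal{C}$ together with a homotopy identifying the composite $M \to S\otimesC M \to M$ with $\textnormal{id}_M$. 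This is exactly the fiber
\begin{align*}
\map_{\mathcal{C}}(S\otimesC M, M)\times_{\map_{\mathcal{C}}(M,M)}\{\textnormal{id}_M\} = \underline{\faut}_{\mathscr{P}}(A)(S).
\end{align*}
This identification should be functorial in $S$. The cleanest way to see this is that the forgetful functor $\textnormal{Act}(\textnormal{id}_M) \to \mathcal{C}_{0/} \simeq \mathcal{C}$ is the right fibration classified by $\underline{\faut}_{\mathscr{P}}(A)$. Indeed, $\textnormal{Act}(\textnormal{id}_M)$ is defined as a pullback of $(\mathcal{C}_{M/})_{/\textnormal{id}_M} \to \mathcal{C}_{M/}$, which is a right fibration whose fiber over $M \to T$ is $\map_{\mathcal{C}_{M/}}(T, \textnormal{id}_M)$. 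Pulling back along $L_M$ gives the fiber formula above, using the standard description of mapping spaces in undercategories as fibers.

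Second, I would use the general fact that a right fibration $\mathcal{E} \to \mathcal{C}$ is representable if and only if $\mathcal{E}$ has a final object (\cite[Proposition 4.4.4.5]{HTT}). In that case the representing object is the image of the final object. By Proposition \ref{prop1}, or more precisely the argument in its proof with $f=\textnormal{id}_A$ using the right adjoint to $L_M$, $\textnormal{Act}(\textnormal{id}_M)$ has a final object $\mathfrak{Z}(\textnormal{id}_M)$. Hence $\underline{\faut}_{\mathscr{P}}(A) \simeq \map_{\mathcal{C}}(-, \mathfrak{Z}(\textnormal{id}_M))$. Concretely, the right adjoint $R_M$ of $L_M$ gives this directly:
\begin{align*}
\map_{\mathcal{C}_{M/}}(L_M S, \textnormal{id}_M) \simeq \map_{\mathcal{C}_{0/}}(S, R_M(\textnormal{id}_M)),
\end{align*}
so $\mathfrak{Z}(\textnormal{id}_M) = R_M(\textnormal{id}_M)$.

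The main obstacle is the first step. There I must check that the mapping spaces in $\mathcal{C}_{M/}$ of the $\infty$-categorical slice agree with the fiber of mapping spaces in $\mathcal{C}$. This is where Lemma \ref{lem2} is used, to know that the slice of the localization is the localization of the model-categorical slice, so that $L_M$ is indeed computed by tensoring with $\textnormal{id}_M$. A second point to check is that $\mathcal{C}_{0/}\simeq\mathcal{C}$, i.e. that $0$ is initial. I would justify this via the Bar--Cobar equivalence with $\alg_{\mathscr{P}}(\modu_{\field})$, where the zero algebra is a zero object.
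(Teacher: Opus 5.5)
Your proposal is correct and is essentially the paper's proof. The paper likewise takes the centralizer from Proposition \ref{prop1} as the value of the right adjoint of tensoring with $\textnormal{id}_{\textnormal{Bar}(A)}$ (citing \cite[Lemma 2.13]{FARR}), and then identifies $\map(u\otimesC \textnormal{id}_{\textnormal{Bar}(A)}, \textnormal{id}_{\textnormal{Bar}(A)})$ in the undercategory with the fiber $\map(S\otimesC \textnormal{Bar}(A),\textnormal{Bar}(A))\times_{\map(\textnormal{Bar}(A),\textnormal{Bar}(A))}\{\textnormal{id}_{\textnormal{Bar}(A)}\}$ via \cite[Lemma 5.5.5.12]{HTT}; your right-fibration packaging and your observation that $0$ is initial (so $(\ldots)_{0/}$ is the whole category) just make explicit the naturality in $S$ and the passage to the unsliced category, which the paper leaves implicit.
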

\begin{proof}
Note first that the centralizer $\mathfrak{Z}(\text{id}_{\text{Bar}(A)})$ exists by Proposition \ref{prop1}. Let $(u: 0 \rightarrow S)\in (\ccoalg_{\mathscr{P}^{\text{!`}}}(\ch(\field))[W_{\text{Kos}}^{-1}])_{0/}$. By \cite[Lemma 2.13]{FARR} and \cite[Lemma 5.5.5.12]{HTT}, we can identify
\begin{align*}
&\map_{(\ccoalg_{\mathscr{P}^{\text{!`}}}(\ch(\field))[W_{\text{Kos}}^{-1}])_{0/}}(u, \mathfrak{Z}(\text{id}_{\text{Bar}(A)}))\\&\simeq \map_{(\ccoalg_{\mathscr{P}^{\text{!`}}}(\ch(\field))[W_{\text{Kos}}^{-1}])_{\text{Bar}(A)/}}(u \otimesC \text{id}_{\text{Bar}(A)},\text{id}_{\text{Bar}(A)}) \\ &\simeq \map_{\coalg_{\mathscr{P}^{\text{!`}}}(\ch(\field))[W_{\text{Kos}}^{-1}]}(S\otimesC \text{Bar}(A),\text{Bar}(A))\times_{\map_{\coalg_{\mathscr{P}^{\text{!`}}}(\ch(\field))[W_{\text{Kos}}^{-1}]}(\text{Bar}(A),\text{Bar}(A))}\{\text{id}_{\obar(A)}\}.
\end{align*}
\end{proof}
We can restrict $\underline{\faut}_{\mathscr{P}}(A)$ along $\delta_{\mathscr{P}^!\{-1\}}$ to obtain a functor
\begin{align*}
    \underline{\faut}_{\mathscr{P}}(A)|_{\alg^{\text{Art}}_{\mathscr{P}^{\text{!}}\{-1\}}}: \alg^{\text{Art}}_{\mathscr{P}^{\text{!}}\{-1\}} \rightarrow \an.
\end{align*}
\begin{theorem}\label{thm21}
Assume that $\mathscr{P}^!\{-1\}$ is concentrated in non-positive degrees. The functor $\underline{\faut}_{\mathscr{P}}(A)|_{\alg^{\textnormal{Art}}_{\mathscr{P}^{\textnormal{!}}\{-1\}}}$ is a $\mathscr{P}^!\{-1\}$-formal group, which under the operadic Lurie-Pridham correspondence \ref{cor1} corresponds to the $\mathbb{E}_1$-algebra in $\mathscr{P}\{1\}$-algebras given by the shifted center $\mathfrak{Z}(A)[-1]$ of $A$ in the symmetric monoidal $\infty$-category of $\mathscr{P}$-algebras.
\end{theorem}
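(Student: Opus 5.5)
The proof combines Proposition \ref{prop4} with Proposition \ref{prop2} and the symmetric monoidal form of the operadic Lurie-Pridham correspondence. First, by Proposition \ref{prop4}, the functor $\underline{\faut}_{\mathscr{P}}(A)$ on $(\ccoalg_{\mathscr{P}^{\text{!`}}}(\ch(\field))[W_{\text{Kos}}^{-1}])^{\text{op}}$ is represented by $Z:=\mathfrak{Z}(\textnormal{id}_{\textnormal{Bar}(A)})$. Note that $(\mathscr{P}^!\{-1\})^{\vee}$ identifies with $\mathscr{P}^{\text{!`}}$ up to the shift conventions, so that $\delta_{\mathscr{P}^!\{-1\}}$ lands in $\ccoalg_{\mathscr{P}^{\text{!`}}}(\ch(\field))[W_{\text{Kos}}^{-1}]$. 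The restriction along $\delta_{\mathscr{P}^!\{-1\}}$ is therefore exactly $\spc(Z)$. By Proposition \ref{prop2}, $\spc(Z)$ is a $\mathscr{P}^!\{-1\}$-formal moduli problem. The computation in the proof of Proposition \ref{prop2} then gives $\spc(Z)\simeq \Psi_{\mathscr{P}^!\{-1\}}(\textnormal{Cobar}(Z)[-1])$, where the hypothesis that $\mathscr{P}^!\{-1\}$ is non-positively graded is what allows Theorem \ref{cor1} to be applied.

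Next, I identify $\textnormal{Cobar}(Z)$ with the centralizer of $\textnormal{id}_A$ in $\alg_{\mathscr{P}}(\modu_{\field})$. The monoidal structure on $\alg_{\mathscr{P}}(\modu_{\field})$ is transported along the Bar--Cobar equivalence, which is therefore a monoidal equivalence. Centralizers are defined purely in terms of the monoidal $\infty$-category, as final objects of $\text{Act}(f)$, so any monoidal equivalence preserves them. Hence $\textnormal{Cobar}(\mathfrak{Z}(\textnormal{id}_{\textnormal{Bar}(A)}))\simeq \mathfrak{Z}(\textnormal{id}_A)$, and by HA 5.3.1.8 this underlies the center $\mathfrak{Z}(A)$. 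The same transport argument identifies the center of $\textnormal{Bar}(A)$, which is an $\mathbb{E}_1$-algebra in coalgebras, with $\textnormal{Bar}(\mathfrak{Z}(A))$ as an $\mathbb{E}_1$-algebra.

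For the group structure, I use that $\spc$ is symmetric monoidal by construction, since the monoidal structure on $\fmp_{\mathscr{P}^!\{-1\}}$ was defined by transport. It therefore induces an equivalence on $\mathbb{E}_1$-algebra objects. The $\mathbb{E}_1$-structure on $\mathfrak{Z}(\textnormal{Bar}(A))$ thus gives $\spc(Z)$ the structure of a formal $\mathscr{P}^!\{-1\}$-group, and $\Psi_{\mathscr{P}^!\{-1\}}$, being symmetric monoidal for the shifted structure, matches this with the $\mathbb{E}_1$-structure on $\mathfrak{Z}(A)[-1]$ in $\mathscr{P}\{1\}$-algebras. The remark at the end of Section \ref{formal_groups_and_operadic_formal_groups} translates this back to an $\mathbb{E}_1$-algebra in $\mathscr{P}$-algebras on $\mathfrak{Z}(A)$.

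The main obstacle is bookkeeping compatibility. One must check that the composite $\Psi\circ[-1]\circ\textnormal{Cobar}$ agrees with $\spc$ as symmetric monoidal functors, not merely on objects, so that the $\mathbb{E}_1$-structures really correspond. A related point is that the group structure visible on $\underline{\faut}$, composition of automorphisms, coincides with the multiplication on the centralizer coming from the composition action. I would handle both by working throughout in the coalgebra model, where every identification is either a definition by transport or the representability isomorphism of Proposition \ref{prop4}. That isomorphism is natural in $S$ and compatible with the composition maps of HA 5.3.1.8.
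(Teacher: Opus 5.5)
Your proposal is correct and follows essentially the same route as the paper. Propositions \ref{prop4} and \ref{prop2} give $\underline{\faut}_{\mathscr{P}}(A)|_{\alg^{\textnormal{Art}}_{\mathscr{P}^!\{-1\}}}\simeq \spc(\mathfrak{Z}(\textnormal{id}_{\textnormal{Bar}(A)}))\simeq \Psi_{\mathscr{P}^!\{-1\}}(\textnormal{Cobar}(\mathfrak{Z}(\textnormal{id}_{\textnormal{Bar}(A)}))[-1])$, and the symmetric monoidal Bar--Cobar equivalence then identifies this with $\mathfrak{Z}(A)[-1]$ as an $\mathbb{E}_1$-algebra. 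The only difference is the direction of the $\mathbb{E}_1$-comparison: the paper builds the structure from the composition maps on $\underline{\faut}_{\mathscr{P}}(A)$ (lax monoidal presheaf, Day convolution, monoidal Yoneda) and matches it to the center's multiplication via the universal action, whereas you transport the center's structure and check compatibility with composition afterwards, using the same ingredients.
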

\begin{proof}
By Proposition \ref{prop4}, we have an equivalence of functors
\begin{align*}
    \underline{\faut}_{\mathscr{P}}(A)|_{\alg^{\text{Art}}_{\mathscr{P}^!\{-1\}}} \simeq \spc(\mathfrak{Z}(\text{id}_{\text{Bar}(A)})).
\end{align*}
By Proposition \ref{prop2}, this shows that $\underline{\faut}_{\mathscr{P}}(A)|_{\alg^{\text{Art}}_{\mathscr{P}^!\{-1\}}}$ is a $\mathscr{P}^!\{-1\}$-formal moduli problem, which under the operadic Lurie-Pridham correspondence corresponds to $\text{Cobar}(\mathfrak{Z}(\text{id}_{\text{Bar}(A)}))[-1]$. The functor $\underline{\faut}_{\mathscr{P}}(A)$ has a lax monoidal structure induced by the composition in the formal internal automorphism group. In particular, for $S, T\in \ccoalg_{\mathscr{P}^{\text{!`}}}(\ch(\field))[W_{\text{Kos}^{-1}}]$, we have a canonical map
\begin{align*}
    \underline{\faut}_{\mathscr{P}}(A)(T) \times \underline{\faut}_{\mathscr{P}}(A)(S) &\rightarrow \underline{\faut}_{\mathscr{P}}(A)(T\otimesC S) 
\end{align*}
sending $f: S \otimesC \text{Bar}(A) \rightarrow \text{Bar}(A)$ and $g: T \otimesC \text{Bar}(A) \rightarrow \text{Bar}(A)$ to the composition
\begin{align*}
    T \otimesC S \otimesC \text{Bar}(A) \xrightarrow{\text{id}_T\otimesC f} T \otimesC \text{Bar}(A) \xrightarrow{g} \text{Bar}(A).
\end{align*}
Since lax monoidal presheaves correspond to $\mathbb{E}_1$-algebras in the Day convolution monoidal structure, and the Yoneda embedding is fully faithful and monoidal, this precisely corresponds to the structure of an $\mathbb{E}_1$-algebra on the representing object $\mathfrak{Z}(\text{id}_{\text{Bar}(A)})$. This is the canonical multiplication on the center $\mathfrak{Z}(\text{Bar}(A))$, since both are characterized by composition of the universal action. The symmetric monoidal Bar-Cobar equivalence identifies $\text{Cobar}(\mathfrak{Z}(\text{Bar}(A)))$ with $\mathfrak{Z}(A)$ as $\mathbb{E}_1$-algebras in $\mathscr{P}$-algebras.
\end{proof} 
\subsection{Comparison to the deformation problem of $A$}\label{the_formal_automorphism_group_2}
The counital cocommutative cooperad $\dgcocomm$ is the monoidal unit with respect to the Hadamard tensor product of cooperads. Consider the unit morphism of the Hopf cooperad $(\mathscr{P}^{\text{!`}})^{\text{cu}}$ 
\begin{align*}
    \varphi^{\text{cu}}: \dgcocomm \rightarrow (\mathscr{P}^{\text{!`}})^{\text{cu}}.
\end{align*}
This induces a morphism of cooperads
\begin{align*}
    \varphi: \nccocomm \rightarrow \mathscr{P}^{\text{!`}}.
\end{align*}
By \cite[Theorem 3.11 and Proposition 3.14]{DCH}, 
\begin{center}
    \begin{tikzcd}
             \ccoalg_{\nccocomm}(\ch(\field)) \arrow[r, shift left=1ex, "\varphi_{\ast}"{name=G}] &\ccoalg_{\mathscr{P}^{\textnormal{!`}}}(\ch(\field))\arrow[l, shift left=.5ex, "\varphi^{\text{ext}}"{name=F}]
        \end{tikzcd}
\end{center}
is a Quillen adjunction with respect to the respective Koszul weak equivalences, and further $\varphi_{\ast}$ preserves those weak equivalences. Since $\varphi^{\text{cu}}$ is a morphism of Hopf cooperads, the functor $\varphi_{\ast}$ is strong symmetric monoidal, and so is the induced functor on $\infty$-categories. In particular, the induced right adjoint on $\infty$-categories is lax symmetric monoidal. \displaypar
We want to compare the $\mathscr{P}^!\{-1\}$-formal group given by the formal internal automorphism group to the formal group given by loops of the commutative formal moduli problem of deformations of $A$. To this end, we need to describe how the morphism $\varphi$ yields a comparison between $\mathscr{P}^!\{-1\}$-formal moduli problems and commutative formal moduli problems.
\begin{lemma}
Let $\mathscr{Q}$ and $\mathscr{R}$ be Koszul operads such that $\mathscr{Q}^!\{-1\}$ and $\mathscr{R}^!\{-1\}$ are concentrated in non-positive degrees, and let $\varphi: \mathscr{Q}^{\textnormal{!`}} \rightarrow \mathscr{R}^{\text{!`}}$ be a morphism of cooperads. The following diagram commutes
\begin{center}
\resizebox{\linewidth}{!}{%
\begin{tikzcd}[
  ampersand replacement=\&,
  column sep={3.5cm,between origins},
  row sep={3.5cm,between origins},
  cells={nodes={inner sep=3pt}},
  every label/.append style={font=\small, inner sep=2.5pt}
]
\&
\alg_{\mathcal Q\{1\}}(\modu_{\field})
\&[5cm]
\&
\textnormal{Fun}\!\left(
  \alg^{\textnormal{Art}}_{\mathcal Q^!\{-1\}},\an
\right)
\\
\alg_{\mathcal R\{1\}}(\modu_{\field})
\&
\&
\textnormal{Fun}\!\left(
  \alg^{\textnormal{Art}}_{\mathcal R^!\{-1\}},\an
\right)
\&
\\[-2cm]
\&
\ccoalg_{\mathcal Q^{\textnormal{!`}}}
  (\ch(\field))[W_{\textnormal{Kos}}^{-1}]
\&
\&
\textnormal{PSh}\!\left(
  \ccoalg_{\mathcal Q^{\textnormal{!`}}}
  (\ch(\field))[W_{\textnormal{Kos}}^{-1}]
\right)
\\
\ccoalg_{\mathcal R^{\textnormal{!`}}}
  (\ch(\field))[W_{\textnormal{Kos}}^{-1}]
\&
\&
\textnormal{PSh}\!\left(
  \ccoalg_{\mathcal R^{\textnormal{!`}}}
  (\ch(\field))[W_{\textnormal{Kos}}^{-1}]
\right)
\arrow[from=3-2, to=1-2,
  "{[-1]\circ\cobar_{\mathcal Q}}"', pos=.55]
\arrow[from=1-2, to=1-4,
  "{\Psi_{\mathcal Q^!\{-1\}}}"]
\arrow[from=3-2, to=3-4, "y", pos=.16]
\arrow[from=3-4, to=1-4,
  "{\delta_{\mathcal Q^!\{-1\}}^*}"']
\arrow[from=3-2, to=1-4, "\spc", pos=.52]
\arrow[from=2-1, to=1-2,
  "{\bigl((\varphi^\vee)^!\bigr)^*}"]
\arrow[from=4-1, to=3-2,
  "\varphi^{\textnormal{ext}}"]
\arrow[from=4-3, to=3-4,
  "{(\varphi_*)^*}"']
\arrow[from=2-3, to=1-4,
  "{\bigl((\varphi^\vee)^*\bigr)^*}"', pos=.25]
\arrow[from=4-1, to=2-1,
  "{[-1]\circ\cobar_{\mathcal R}}"]
\arrow[from=2-1, to=2-3, crossing over,
  "{\Psi_{\mathcal R^!\{-1\}}}"', pos=.82]
\arrow[from=4-1, to=4-3, "y"]
\arrow[from=4-3, to=2-3, crossing over,
  "{\delta_{\mathcal R^!\{-1\}}^*}", pos=.5]
\arrow[from=4-1, to=2-3, crossing over,
  "\spc"', pos=.5]
\end{tikzcd}%
}
\end{center}
If $\mathscr{Q}^{\textnormal{!`}}$ and $\mathscr{R}^{\textnormal{!`}}$ are equipped with admissible Hopf counital structures, and $\varphi$ is the restriction of a morphism of Hopf cooperads $\varphi^{\textnormal{cu}}: (\mathscr{Q}^{\textnormal{!`}})^{\textnormal{cu}} \rightarrow (\mathscr{R}^{\textnormal{!`}})^{\textnormal{cu}}$, then all the arrows in the above diagram preserve $\mathbb{E}_1$-algebra objects if we equip coalgebra presheaves with the Day convolution monoidal structure.
\end{lemma}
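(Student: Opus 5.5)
The diagram is a cube, so the plan is to check commutativity face by face and then the monoidal claim arrow by arrow. The front and back faces are the triangles and squares already shown to commute in Proposition \ref{prop2}: for each of $\mathscr{Q}$ and $\mathscr{R}$ we have $\spc \simeq \delta^*\circ y$ and $\spc \simeq \Psi\circ[-1]\circ\cobar$ by the computation in that proof. So only the side faces involving $\varphi$ remain.

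\textbf{Bottom face: $(\varphi_*)^*\circ y \simeq y\circ\varphi^{\textnormal{ext}}$.} This is the adjunction $\varphi_*\dashv\varphi^{\textnormal{ext}}$ on $\infty$-categories, which exists because $\varphi_*$ preserves Koszul weak equivalences and the pair is Quillen \cite[Theorem 3.11, Proposition 3.14]{DCH}. For a $\mathscr{R}^{\textnormal{!`}}$-coalgebra $C$ it gives, naturally in $S$,
\[
\map(\varphi_*S, C)\simeq \map(S,\varphi^{\textnormal{ext}}C).
\]

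\textbf{Right face: $\delta^*_{\mathscr{Q}^!\{-1\}}\circ(\varphi_*)^*\simeq((\varphi^\vee)^*)^*\circ\delta^*_{\mathscr{R}^!\{-1\}}$.} Here $(\varphi^\vee)^*$ is restriction of Artin algebras along $\varphi^\vee:\mathscr{R}^\vee\to\mathscr{Q}^\vee$. It therefore suffices to produce a natural equivalence
\[
\varphi_*\,\delta_{\mathscr{Q}^!\{-1\}}(R)\simeq \delta_{\mathscr{R}^!\{-1\}}\bigl((\varphi^\vee)^*R\bigr).
\]
I would first prove this on strict Artin algebras. There $\delta$ is the linear dual, $\varphi_*$ is corestriction of coalgebras, and restriction along $\varphi^\vee$ is dual to corestriction along $\varphi$, so the equivalence is tautological. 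For a general Artin algebra I would resolve $R$ as an iterated square-zero extension, i.e. a finite limit of strict pieces. Both sides send these pullbacks to pushouts of coalgebras: on the left because $\varphi_*$ is a left adjoint, on the right by the Schlessinger property of $\delta$ used in Proposition \ref{prop2}. Alternatively I would use \cite[Proposition 4.13]{CCN}, which expresses every Artin algebra through strict ones.

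\textbf{Top and left faces.} The top face then follows formally. Each $\Psi$ is an equivalence by Theorem \ref{cor1}, so the top arrow $((\varphi^\vee)^!)^*$ can be defined, or identified, as the unique functor making the top face commute. Since the other faces commute, the left face commutes as well. Concretely this is the functor induced on Koszul duals by $\varphi$, i.e. $[-1]\circ\cobar_{\mathscr{Q}}\circ\varphi^{\textnormal{ext}}\circ\obar_{\mathscr{R}}\circ[1]$. I expect the right face to be the main obstacle, because $\delta$ is only the linear dual on strict objects and the comparison must be made coherently natural.

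\textbf{Monoidal statement.} It suffices to check that every arrow is lax monoidal.
\begin{itemize}
\item $\varphi_*$ is strong monoidal, since $\varphi^{\textnormal{cu}}$ is a Hopf morphism. Hence $\varphi^{\textnormal{ext}}$ is lax monoidal.
\item The Yoneda embedding $y$ is strong monoidal for Day convolution.
\item $(\varphi_*)^*$ is lax monoidal for Day convolution, because $\varphi_*$ is strong monoidal.
\item The vertical maps $[-1]\circ\cobar$ are symmetric monoidal equivalences by construction of the transported structures.
\item $\delta^*$ and $\Psi$ transport $\mathbb{E}_1$-algebras, as in the proof of Theorem \ref{thm21}, which identifies $\mathbb{E}_1$-algebras in $\fmp$ with $\mathbb{E}_1$-algebras in coalgebra presheaves that are representable.
\end{itemize}
Lax monoidal functors preserve $\mathbb{E}_1$-algebras, and the remaining diagonal arrows are composites of the above, so all arrows preserve $\mathbb{E}_1$-algebra objects.
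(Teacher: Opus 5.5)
Your front, back and bottom faces are fine, but ``the top face then follows formally'' is a genuine gap. The arrow $((\varphi^\vee)^!)^*$ is not yours to define. It is restriction along the Koszul dual morphism $(\varphi^\vee)^!\colon\mathscr{Q}\{1\}\to\mathscr{R}\{1\}$, and the paper uses it as exactly that after Corollary~\ref{cor2} (commutator Lie algebra, forgetting the commutative product).

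Full faithfulness of $\Psi_{\mathscr{Q}^!\{-1\}}$ only gives you \emph{some} functor $F=[-1]\circ\cobar_{\mathscr{Q}}\circ\varphi^{\textnormal{ext}}\circ\obar_{\mathscr{R}}\circ[1]$ making the top face commute. The statement $F\simeq((\varphi^\vee)^!)^*$ is precisely the left face, so ``since the other faces commute, the left face commutes as well'' is circular. The faces you verify never involve $((\varphi^\vee)^!)^*$. Together they only give $\Psi_{\mathscr{Q}^!\{-1\}}\circ[-1]\circ\cobar_{\mathscr{Q}}\circ\varphi^{\textnormal{ext}}\simeq((\varphi^\vee)^*)^*\circ\Psi_{\mathscr{R}^!\{-1\}}\circ[-1]\circ\cobar_{\mathscr{R}}$, which says nothing about restricting algebras. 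This missing input is what the paper takes from \cite[Proposition~6.10]{CCN} together with naturality of Bar--Cobar. Your monoidal claim for $((\varphi^\vee)^!)^*$ rests on the same unproved identification.

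To close the gap, prove the left face directly.
\begin{itemize}
\item Let $f\colon\mathscr{Q}\to\mathscr{R}$ be the unshifted $(\varphi^\vee)^!$. For $(\varphi^\vee)^!$ to make sense, $\varphi$ must preserve weight and cogenerators, so $\kappa_{\mathscr{R}}\circ\varphi=f\circ\kappa_{\mathscr{Q}}$.
\item Hence $\cobar_{\mathscr{R}}(\varphi_*C)$ and $f_!\cobar_{\mathscr{Q}}(C)$ are both $\mathscr{R}(C)$ with the same twisted differential. This gives a strict isomorphism $\cobar_{\mathscr{R}}\circ\varphi_*\cong f_!\circ\cobar_{\mathscr{Q}}$ between functors preserving weak equivalences, since $\cobar_{\mathscr{Q}}(C)$ is cofibrant.
\item Passing to $\infty$-categorical right adjoints gives $\varphi^{\textnormal{ext}}\circ\obar_{\mathscr{R}}\simeq\obar_{\mathscr{Q}}\circ f^*$. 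After shifting, this is the left face, and your argument then yields the top face.
\end{itemize}

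Alternatively, take the top face from \cite[Proposition~6.10]{CCN}. Top and left, together with front, back, bottom and the adjunction $\varphi_*\dashv\varphi^{\textnormal{ext}}$, give $\map(\varphi_*\delta_{\mathscr{Q}^!\{-1\}}(R),C)\simeq\map(\delta_{\mathscr{R}^!\{-1\}}((\varphi^\vee)^*R),C)$ naturally in $R$ and $C$. By Yoneda this yields the right face. This route also removes the weakness in your right-face argument: as written, your induction over square-zero extensions has no natural comparison map on non-strict Artin algebras to induct with, which is the coherence problem you flagged yourself.
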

\begin{proof}
Commutativity follows from naturality of the Bar-Cobar equivalences,
linear duality, and \cite[Proposition~6.10]{CCN}. Under the Hopf compatibility hypotheses, the remaining monoidal compatibilities follow from monoidality of the Yoneda embedding with respect to Day convolution, lax monoidality of precomposition along strong symmetric monoidal
functors, and transport along the displayed equivalences.
Thus all arrows preserve $\mathbb E_1$-algebra objects.
\end{proof}
Applying this to $\varphi: \nccocomm \rightarrow \mathscr{P}^{\text{!`}}$, we obtain a commutative formal group 
\begin{align*}
    ((\varphi^{\vee})^{\ast})^{\ast} \left(\underline{\faut}_{\mathscr{P}}(A)|_{\alg^{\textnormal{Art}}_{\mathscr{P}^{\textnormal{!}}\{-1\}}}\right) \simeq \left((\varphi_{\ast})^{\ast}\underline{\faut}_{\mathscr{P}}(A)\right)|_{\alg^{\text{Art}}_{\nucomm}}
\end{align*}
\begin{proposition}\label{prop3}
The formal group $\left((\varphi_{\ast})^{\ast} \underline{\faut}_{\mathscr{P}}(A)\right)|_{\alg^{\textnormal{Art}}_{\nucomm}}$ corresponds to the Lie algebra given by the \textbf{extended operadic tangent complex}
\begin{align*}
    \mathbb{T}^{\mathscr{P}, \textnormal{ex}}_A := \textnormal{Conv}((\mathscr{P}^{\textnormal{!`}})^{\textnormal{cu}}, \e_A)^{\phi}
\end{align*}
with $\phi$ the Maurer-Cartan element corresponding to the $\mathscr{P}$-algebra structure on $A$.
\end{proposition}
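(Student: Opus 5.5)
We compute the Lie algebra corresponding to $\left((\varphi_{\ast})^{\ast}\underline{\faut}_{\mathscr{P}}(A)\right)|_{\alg^{\text{Art}}_{\nucomm}}$ by tracing it through the commutative diagram of the preceding lemma, with $\mathscr{Q}^{\text{!`}} = \nccocomm$ and $\mathscr{R}^{\text{!`}} = \mathscr{P}^{\text{!`}}$.

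\emph{Step 1: identify the formal moduli problem.} By Proposition \ref{prop4}, $\underline{\faut}_{\mathscr{P}}(A) \simeq y(\mathfrak{Z}(\text{id}_{\text{Bar}(A)}))$. Commutativity of the lemma's diagram then gives
\begin{align*}
(\varphi_\ast)^\ast y(\mathfrak{Z}(\text{id}_{\text{Bar}(A)})) \simeq y(\varphi^{\text{ext}}\mathfrak{Z}(\text{id}_{\text{Bar}(A)})).
\end{align*}
Hence the restricted functor is $\spc$ of the cocommutative coalgebra $\varphi^{\text{ext}}\mathfrak{Z}(\text{id}_{\text{Bar}(A)})$. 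Under the Lurie-Pridham correspondence it therefore corresponds to the dg Lie algebra $\mathfrak{g} := \cobar_{\nccocomm}(\varphi^{\text{ext}}\mathfrak{Z}(\text{id}_{\text{Bar}(A)}))[-1]$. The $\mathbb{E}_1$/group structure is carried along by the lemma, so it suffices to identify $\mathfrak{g}$ as a Lie algebra. Here we use that commutative formal groups are equivalent to formal moduli problems via $\Omega$, so the group's Lie algebra is the one attached to its delooping.

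\emph{Step 2: compute $\mathfrak{g}$ via a mapping space.} Rather than computing $\varphi^{\text{ext}}$ of the centralizer directly, we test $\mathfrak{g}$ against Artin algebras. For $R \in \alg^{\text{Art}}_{\nucomm}$ with $S = \delta_{\nucomm}(R)$, the value of the functor is
\begin{align*}
\map(\varphi_\ast S \otimesC \text{Bar}(A), \text{Bar}(A)) \times_{\map(\text{Bar}(A),\text{Bar}(A))} \{\text{id}\}.
\end{align*}
Using the twisting-morphism description \cite[Proposition 11.3.2]{LV}, coalgebra maps $\varphi_\ast S \otimesC \text{Bar}(A) \to \text{Bar}(A)$ correspond to twisting morphisms. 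These are Maurer-Cartan elements in the convolution algebra $\hom(\varphi_\ast S\otimesC \text{Bar}(A), A)$. Since $\varphi_\ast S\otimesC\text{Bar}(A)$ has underlying complex $\text{Bar}(A)\oplus S\oplus S\otimes\text{Bar}(A)$ (as in the proof of Proposition \ref{prop1}), fixing the restriction to $\text{Bar}(A)$ to be $\phi$ leaves MC data in
\begin{align*}
\hom(S\oplus S\otimes \text{Bar}(A), A) \cong \mathfrak{m}_R \hat\otimes \hom(\field\oplus \text{Bar}(A), A) = \mathfrak{m}_R\otimes \text{Conv}((\mathscr{P}^{\text{!`}})^{\text{cu}},\e_A).
\end{align*}
Here the extra summand $\field$ comes precisely from the arity-zero counit $\field\subset (\mathscr{P}^{\text{!`}})^{\text{cu}}(0)$. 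Twisting by $\phi$ produces the differential $d+[\phi,-]$. This exhibits the functor as $R\mapsto \text{MC}(\mathfrak{m}_R\otimes \mathbb{T}^{\mathscr{P},\text{ex}}_A[?])$, up to the shift convention relating the formal group to its classifying formal moduli problem. By Lurie-Pridham, the Lie algebra is then $\mathbb{T}^{\mathscr{P},\text{ex}}_A$.

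\emph{Main obstacle.} The delicate step is Step 2: verifying that the bracket matches. The convolution bracket on $\text{Conv}((\mathscr{P}^{\text{!`}})^{\text{cu}},\e_A)$ must be the one induced on the MC equation by the Hopf structure, i.e. by how the coproduct of $\varphi_\ast S\otimesC\text{Bar}(A)$ mixes $S$ and $\text{Bar}(A)$. I would check this first on strict Artin algebras, where $S = R^\vee$ and everything is strict, by writing out the cooperad decomposition maps of $(\mathscr{P}^{\text{!`}})^{\text{cu}}$ and using the unit axiom of Definition \ref{def2}. I would then extend to general $R$ using that both sides are formal moduli problems determined by their values on strict Artin algebras. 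A secondary check is the degree shift: it should agree with the known case $\mathbb{T}_{\mathcal{M},A}[-1]\simeq \mathbb{T}^{\mathscr{P}}_A$, with the extension accounting for the additional curvature term.
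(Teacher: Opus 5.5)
There is a genuine gap, and it sits exactly where you write ``$[?]$'' and ``up to the shift convention''. The points you compute in Step 2 are coalgebra maps $\varphi_\ast S\otimesC\obar(A)\to\obar(A)$ restricting to the identity. These are twisting morphisms relative to $\kappa$, and such twisting morphisms are maps of degree $0$: in \cite[Proposition 11.3.2]{LV} the shift is carried by $\kappa$. So a point of the group is a degree-$0$ element of $\mathfrak{m}_R\otimes\mathbb{T}^{\mathscr{P},\textnormal{ex}}_A$ satisfying a quadratic equation whose quadratic term has degree $+1$. This is automorphism data, not a Maurer--Cartan element of $\mathfrak{m}_R\otimes\mathbb{T}^{\mathscr{P},\textnormal{ex}}_A$; those would be curved deformations, i.e.\ points of the delooping. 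In particular, that quadratic term cannot be the convolution bracket. For strict $R$, writing the automorphism as $\exp(x)$ even turns the equation into the linear condition $[D,x]=0$.

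This reflects a general fact. For a formal group $\Omega X$, the Lie algebra that Lurie--Pridham attaches to its underlying formal moduli problem is $\Omega\mathfrak{g}_X$, which is homotopy abelian with underlying complex $\mathfrak{g}_X[-1]$. So your reduction ``it suffices to identify $\mathfrak{g}$ as a Lie algebra'' is false. Your $\mathfrak{g}=\cobar(\varphi^{\textnormal{ext}}\mathfrak{Z}(\textnormal{id}_{\obar(A)}))[-1]$ is $\Omega\mathbb{T}^{\mathscr{P},\textnormal{ex}}_A$, and $\Omega\mathfrak{h}\simeq\Omega\mathfrak{h}'$ whenever $\mathfrak{h},\mathfrak{h}'$ are quasi-isomorphic as complexes. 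Hence the bracket matching in your ``main obstacle'' cannot succeed as formulated. (A minor point: the restriction to $\obar(A)$ is the projection $\obar(A)\to A$ corresponding to $\textnormal{id}_{\obar(A)}$, not $\phi$. The twisted differential $d+[\phi,-]$ arises from linearizing at that projection.)

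The missing idea is that the bracket of $\mathbb{T}^{\mathscr{P},\textnormal{ex}}_A$ is encoded only in the $\mathbb{E}_1$-structure given by composition, which you never use. The paper proceeds as follows.
\begin{itemize}
\item It computes the strict right adjoint $\tilde r_A$ of $\tilde l_A=l_A\circ\varphi_\ast$ at $\textnormal{id}_{\obar(A)}$. Testing against trivial coalgebras $V$ identifies its primitives with $\textnormal{Coder}_{(\mathscr{P}^{\textnormal{!`}})^{\textnormal{cu}}}(\obar(A)^+)$.
\item It deduces $\tilde r_A(\textnormal{id}_{\obar(A)})\cong\overline{U}(\textnormal{Coder}_{(\mathscr{P}^{\textnormal{!`}})^{\textnormal{cu}}}(\obar(A)^+))$. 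Since the commutator of the composition product on primitives is the commutator of coderivations, this is an isomorphism of Hopf algebras.
\item Left Quillen-ness of $\tilde l_A$ and fibrancy of $\textnormal{id}_{\obar(A)}$ (Lemma \ref{lem2}) promote this to the $\infty$-categorical representing object. This also fixes a secondary problem in your Step 2, which computes hom-sets rather than derived mapping spaces.
\item Finally, $\overline{U}\simeq C^{\textnormal{CE}}_\ast\circ\Omega$ identifies the $\mathbb{E}_1$-algebra in Lie algebras as $\Omega\mathbb{T}^{\mathscr{P},\textnormal{ex}}_A$, which deloops to $\mathbb{T}^{\mathscr{P},\textnormal{ex}}_A$.
\end{itemize}
To rescue your route you would need an analogous step. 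For example, you could show that composition matches the loop-group structure, so that the delooping of your functor is $R\mapsto\textnormal{MC}(\mathfrak{m}_R\otimes\mathbb{T}^{\mathscr{P},\textnormal{ex}}_A)$.
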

\begin{proof}
The functor $(\varphi_{\ast})^{\ast} \underline{\faut}_{\mathscr{P}}(A)$ sends $S\in \ccoalg_{\nccocomm}[W_{\textnormal{Kos}}^{-1}]$ to 
\begin{align*}
    \map_{\ccoalg_{\mathscr{P}^{\textnormal{!`}}}[W_{\textnormal{Kos}}^{-1}]}(\varphi_{\ast}(S) \otimesC \textnormal{Bar}(A), \textnormal{Bar}(A)) \times_{\map_{\ccoalg_{\mathscr{P}^{\textnormal{!`}}}[W_{\textnormal{Kos}}^{-1}]}(\textnormal{Bar}(A), \textnormal{Bar}(A))} \{\textnormal{id}_{\textnormal{Bar}(A)}\}.
\end{align*}
Consider the unit axiom for the Hopf cooperad $(\mathscr{P}^{\textnormal{!`}})^{\textnormal{cu}}$
\begin{center}
    \begin{tikzcd}
\mathbf{coCom} \otimes_{H}(\mathscr{P}^{\textnormal{!`}})^{\textnormal{cu}} \arrow[rd, "\simeq"'] \arrow[r, "\varphi^{\textnormal{cu}}\otimes \textnormal{id}"] & (\mathscr{P}^{\textnormal{!`}})^{\textnormal{cu}}\otimes_H (\mathscr{P}^{\textnormal{!`}})^{\textnormal{cu}} \arrow[d, "\textnormal{mult}"] \\
                                                                                                                                                          & (\mathscr{P}^{\textnormal{!`}})^{\textnormal{cu}}                                                                                          
\end{tikzcd}.
\end{center}
This shows that $\varphi_{\ast}(S) \otimesC \textnormal{Bar}(A) \simeq \overline{S^+ \otimes \textnormal{Bar}(A)^+}$.
As in the proof of Proposition \ref{prop1}, let $l_A:\ccoalg_{\mathscr{P}^{\textnormal{!`}}}(\ch(\field))_{0/} \rightarrow \ccoalg_{\mathscr{P}^{\textnormal{!`}}}(\ch(\field))_{\text{Bar}(A)/}$ denote the functor
$(0 \rightarrow S) \mapsto (\text{Bar}(A) \rightarrow S\otimesC \text{Bar}(A))$. Let $\tilde{l}_A := l_A \circ \varphi_{\ast}$. Since $\tilde{l}_A$ is a composite of left adjoints, it admits a right adjoint $\tilde{r}_A$. Now let $V\in \ch(\field)$, and consider the trivial conilpotent cocommutative coalgebra structure on $V$. Since taking primitive elements is right adjoint to the trivial conilpotent cocommutative coalgebra functor, we get
\begin{align*}
    \hom_{\ccoalg_{\nccocomm}(\ch(\field))}(V, \tilde{r}_A(\text{id}_{\text{Bar}(A)})) \cong \hom_{\ch(\field)}(V, \text{Prim}(\tilde{r}_A(\text{id}_{\text{Bar}(A)}))).
\end{align*}
The left hand side is isomorphic to 
\begin{align*}
     &\hom_{\ccoalg_{\mathscr{P}^{\text{!`}}}(\ch(\field))_{\text{Bar}(A)/}}(\tilde{l}_A(V), \text{id}_{\text{Bar}(A)})\\&\cong \hom_{\ccoalg_{\mathscr{P}^{\text{!`}}}(\ch(\field))}(\overline{V^+\otimes \text{Bar}(A)^+}, \text{Bar}(A)) \times_{\hom_{\ccoalg_{\mathscr{P}^{\text{!`}}}(\ch(\field))}(\text{Bar}(A),\text{Bar}(A))} \{\text{id}_{\text{Bar}(A)}\} \\&\cong \hom_{\text{CoAlg}^{\text{conil, coaug}}_{(\mathscr{P}^{\text{!`}})^{\text{cu}}}(\ch(\field))}({V^+\otimes \text{Bar}(A)^+}, \text{Bar}(A)^+) \times_{\hom_{\text{CoAlg}^{\text{conil, coaug}}_{(\mathscr{P}^{\text{!`}})^{\text{cu}}}(\ch(\field))}(\text{Bar}(A)^+,\text{Bar}(A)^+)} \{\text{id}_{\text{Bar}(A)}\} \\&\cong \hom_{\ch(\field)}(V, \text{Coder}_{(\mathscr{P}^{\text{!`}})^{\text{cu}}}(\text{Bar}(A)^+)).
\end{align*}
Hence, we get an equivalence of dg Lie algebras $\text{Prim}(\tilde{r}_A(\text{id}_{\text{Bar}(A)})) \simeq \text{Coder}_{(\mathscr{P}^{\text{!`}})^{\text{cu}}}(\text{Bar}(A)^+)$. In particular, we get an equivalence of non-counital cocommutative coalgebras
\begin{align*}
    \tilde{r}_A(\text{id}_{\text{Bar}(A)}) \simeq \overline{U}(\text{Coder}_{(\mathscr{P}^{\text{!`}})^{\text{cu}}}(\text{Bar}(A)^+)).
\end{align*}
The multiplication on the representing coalgebra is induced by composition of the universal action. Under the above identification of primitives, its graded commutator therefore agrees with the commutator of coderivations. Consequently, the above isomorphism is an isomorphism of non-counital cocommutative Hopf algebras.\par
One directly checks that $\text{Coder}_{(\mathscr{P}^{\text{!`}})^{\text{cu}}}(\text{Bar}(A)^+)$ computes the extended operadic tangent complex $\mathbb{T}^{\mathscr{P},\text{ex}}_A$. To see that $\overline{U}(\mathbb{T}^{\mathscr{P}, \text{ex}}_A)$ also represents the functor of $\infty$-categories $(\varphi_{\ast})^{\ast}\underline{\faut}_{\mathscr{P}}(A)$, note that both $l_A$ and $\varphi_{\ast}$ are left Quillen, and therefore so is $\tilde{l}_A$. Call the induced functor on $\infty$-categories $\tilde{L}_A$, and its right adjoint $\tilde{R}_A$. Then by Lemma \ref{lem2}, we get
\begin{align*}
    \map_{(\ccoalg_{\mathscr{P}^{\text{!`}}}(\ch(\field))[W_{\text{Kos}}^{-1}])_{\text{Bar}(A)}/}(\tilde{L}_A(S), \text{id}_{\text{Bar}(A)}) \simeq \map_{\ccoalg_{\nccocomm}(\ch(\field))[W_{\text{Kos}}^{-1}]}(S, \tilde{R}_A(\text{id}_{\text{Bar}(A)})),
\end{align*}
and since $\text{id}_{\text{Bar}(A)}$ is fibrant, $\tilde{R}_A(\text{id}_{\text{Bar}(A)}) \simeq \overline{U}(\mathbb{T}^{\mathscr{P},\text{ex}}_A)$. \par
Finally, by \cite[Chapter 6, Theorem 6.1.2]{GR2}, we have $\overline{U} \simeq C^{\text{CE}}_{\ast} \circ \Omega$. Since $C_{\ast}^{\text{CE}} \simeq \text{Bar}_{\lie\{-1\}} \circ [1]$, Proposition \ref{prop2} shows that the formal group corresponding to the functor $(\varphi_{\ast})^{\ast}\underline{\faut}_{\mathscr{P}}(A)$ corresponds to the $\mathbb{E}_1$-algebra in Lie algebras $\Omega \mathbb{T}^{\mathscr{P},\text{ex}}_A$, or equivalently to the Lie algebra $\mathbb{T}^{\mathscr{P},\text{ex}}_A$.
\end{proof}
\begin{remark}
We call the functor $(\varphi_{\ast})^{\ast} \underline{\faut}_{\mathscr{P}}(A)$ the \textbf{extended formal automorphism group of $A$}. Under connectivity assumptions for $A$ and $\mathscr{P}$, one can think of this as the formal group corresponding to the formal moduli problem of deforming $A$ as a \textbf{curved} $\mathscr{P}_{\infty}$-algebra (or more precisely, of curved $(\mathscr{P}\{1\})_{\infty}$-deformations of $A[-1]$). 
\end{remark}
By the above diagram, we immediately get the following.
\begin{corollary}\label{cor2}
There is an equivalence of non-counital cocommutative Hopf algebras
\begin{align*}
    \varphi^{\textnormal{ext}}(\mathfrak{Z}(\textnormal{Bar}(A))) \simeq \overline{U}(\mathbb{T}^{\mathscr{P},\textnormal{ex}}_A).
\end{align*}
Equivalently, there exists an equivalence of $\mathbb{E}_1$-algebras in Lie algebras
\begin{align*}
    ((\varphi^{\vee})^!)^{\ast}(\mathfrak{Z}(A)[-1]) \simeq \Omega \mathbb{T}^{\mathscr{P}, \textnormal{ex}}_A.
\end{align*}
\end{corollary}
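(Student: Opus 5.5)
The argument combines the commutative diagram of the preceding lemma with Proposition \ref{prop3}. Everything is transported along the lower square of that diagram, applied to $\mathscr{Q} = \mathscr{P}$, $\mathscr{R}$ the Koszul operad with $\mathscr{R}^{\textnormal{!`}} = \nccocomm$ (so $\mathscr{R}\{1\} = \lie$ up to the conventions fixed above), and $\varphi: \nccocomm \rightarrow \mathscr{P}^{\textnormal{!`}}$ the restriction of the Hopf unit $\varphi^{\textnormal{cu}}$.

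\emph{Step 1.} By Proposition \ref{prop4} and the proof of Theorem \ref{thm21}, the presheaf $\underline{\faut}_{\mathscr{P}}(A)$ is represented by $\mathfrak{Z}(\textnormal{Bar}(A))$. This is an identification of $\mathbb{E}_1$-algebras in $\textnormal{PSh}(\ccoalg_{\mathscr{P}^{\textnormal{!`}}}(\ch(\field))[W_{\textnormal{Kos}}^{-1}])$ with Day convolution.

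\emph{Step 2.} Precompose with $\varphi_\ast$. Since $\varphi^{\textnormal{ext}}$ is right adjoint to $\varphi_\ast$, the square $(\varphi_\ast)^\ast \circ y \simeq y \circ \varphi^{\textnormal{ext}}$ commutes. Hence $(\varphi_\ast)^\ast\underline{\faut}_{\mathscr{P}}(A)$ is represented by $\varphi^{\textnormal{ext}}(\mathfrak{Z}(\textnormal{Bar}(A)))$. By the lemma, with $\varphi_\ast$ strong monoidal and $\varphi^{\textnormal{ext}}$ lax monoidal, this holds as $\mathbb{E}_1$-algebras, i.e. as non-counital cocommutative Hopf algebras. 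The proof of Proposition \ref{prop3} identifies the same representing object with $\tilde{R}_A(\textnormal{id}_{\textnormal{Bar}(A)}) \simeq \overline{U}(\mathbb{T}^{\mathscr{P},\textnormal{ex}}_A)$, compatibly with multiplication. Yoneda full faithfulness, which is monoidal, then gives the first equivalence. One point needs checking: $\tilde{R}_A(\textnormal{id})$ equals $\varphi^{\textnormal{ext}}$ applied to the right adjoint of $L_A$ evaluated at $\textnormal{id}$, which is $\mathfrak{Z}(\textnormal{id}_{\textnormal{Bar}(A)}) \simeq \mathfrak{Z}(\textnormal{Bar}(A))$. This follows because $\tilde{l}_A = l_A \circ \varphi_\ast$, so the right adjoints compose in the opposite order.

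\emph{Step 3.} Apply $[-1]\circ\cobar$ and use the back square of the lemma. The left vertical arrows intertwine $\varphi^{\textnormal{ext}}$ with $((\varphi^\vee)^!)^\ast$ compatibly with $\mathbb{E}_1$-structures. Since $\cobar(\mathfrak{Z}(\textnormal{Bar}(A))) \simeq \mathfrak{Z}(A)$ as $\mathbb{E}_1$-algebras by Theorem \ref{thm21}, the left side becomes $((\varphi^\vee)^!)^\ast(\mathfrak{Z}(A)[-1])$. On the right side, $[-1]\circ\cobar_{\nccocomm}(\overline{U}(\mathfrak{g}))$ is computed using $\overline{U} \simeq C^{\textnormal{CE}}_\ast\circ\Omega$ and $C^{\textnormal{CE}}_\ast \simeq \obar_{\lie\{-1\}}\circ[1]$, exactly as at the end of Proposition \ref{prop3}. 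This gives $\Omega\mathbb{T}^{\mathscr{P},\textnormal{ex}}_A$ as an $\mathbb{E}_1$-algebra in Lie algebras.

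The main obstacle is the monoidal bookkeeping in Step 3: checking that the Hopf algebra structure on $\overline{U}(\mathfrak{g})$ corresponds under Koszul duality to the group structure $\Omega\mathfrak{g}$, and not merely matching underlying objects. I would handle this with the Gaitsgory--Rozenblyum equivalence $\overline{U}\simeq C^{\textnormal{CE}}_\ast\circ\Omega$ taken as an equivalence of monoidal functors from group objects, together with monoidality of the lemma's diagram.
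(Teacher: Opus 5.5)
Your proposal is correct and follows the paper's route. The paper derives the corollary directly from the lemma's commutative diagram applied to $\varphi:\nccocomm\to\mathscr{P}^{\textnormal{!`}}$, together with Theorem \ref{thm21} and Proposition \ref{prop3}; you spell this out, including the useful check that $\tilde{R}_A$ is $\varphi^{\textnormal{ext}}$ composed with the right adjoint of $L_A$. One cosmetic slip: the lemma's $\varphi$ goes $\mathscr{Q}^{\textnormal{!`}}\to\mathscr{R}^{\textnormal{!`}}$, so you should take $\mathscr{Q}^{\textnormal{!`}}=\nccocomm$ and $\mathscr{R}=\mathscr{P}$, not the reverse, although you use every arrow in the correct direction.
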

\begin{remark}
The extended operadic tangent complex was defined by Melani-Safronov in \cite{MS}. They show that its Lie algebra structure extends to a natural algebra structure over the brace operad $\textnormal{Br}_{\mathscr{P}^{\textnormal{!`}}}$, and in particular, using the brace additivity constructed in \cite{Saf}, this equips the extended operadic tangent complex with the structure of an $\mathbb{E}_1$-algebra in $\mathscr{P}$-algebras. We expect this structure to agree with the $\mathbb{E}_1$-algebra structure in $\mathscr{P}$-algebras coming from the center of $A$. In fact, Melani-Safronov call this $\textnormal{Br}_{\mathscr{P}^{\textnormal{!`}}}$-algebra the center of $A$.
\end{remark}
\begin{remark}
There is a canonical morphism 
\begin{align*}
    \faut_{\mathscr{P}}(A) \rightarrow (\varphi_{\ast})^{\ast} \underline{\faut}_{\mathscr{P}}(A)
\end{align*}
corresponding to the projection $\overline{S^+ \otimes \text{Bar}(A)^+} \rightarrow \overline{S^+ \otimes \text{Bar}(A)^+}/\varphi_{\ast}(S) \cong S^+ \otimes \text{Bar}(A)$. This yields a fiber sequence of Lie algebras
\begin{align*}
    ((\varphi^{\vee})^!)^{\ast}(A[-1]) \rightarrow \mathbb{T}^{\mathscr{P}}_A \rightarrow \mathbb{T}^{\mathscr{P},\textnormal{ex}}_A.
\end{align*}
We can think of this as the operadic analogue of the fiber sequence in \cite[Theorem 1.1]{FR}.
\end{remark}
\subsection{Examples} \label{the_formal_automorphism_group_3}
Recall that we can apply the theory of formal internal automorphism groups whenever $\mathscr{P}$ is Koszul, $\mathscr{P}^{\text{!`}}$ admits an admissible Hopf counital structure, $\mathscr{P}^!\{-1\}$ is concentrated in non-positive degrees, and $A$ is a $\mathscr{P}$-algebra such that there exists an $n$ such that $\mathscr{P}\{n\}$ and $A[-n]$ are both concentrated in non-positive degrees.\displaypar
This applies to all examples considered in this section. The fact that the Koszul dual coalgebras considered here admit admissible Hopf counital structures, and that those correspond to the usual tensor products, was shown in \cite[Corollary 1.18 and Corollary 1.19]{Saf}.\displaypar
First consider $\mathscr{P} = \lie\{-1\}$, so $\mathscr{P}^{\text{!`}} = \nccocomm$ with the Hopf counital structure coming from the counital cocommutative cooperad. Let $\mathfrak{g}$ be a Lie algebra concentrated in non-positive degrees. Then the formal internal automorphism group
\begin{align*}
    \underline{\faut}_{\lie\{-1\}}(\mathfrak{g}[1])|_{\alg^{\text{Art}}_{\nucomm}}
\end{align*}
is a formal group, which corresponds to the $\mathbb{E}_1$-algebra in Lie algebras $\mathfrak{Z}_{\lie\{-1\}}(\mathfrak{g}[1])[-1] \simeq \mathfrak{Z}_{\lie}(\mathfrak{g})$. The map $\varphi$ is the identity on $\nccocomm$, so this formal group is equivalent to the extended formal automorphism group of the shifted Lie algebra $\mathfrak{g}[1]$. In particular, we see that the Lie algebra center is just given by the extended operadic tangent complex
\begin{align*}
\mathfrak{Z}_{\lie}(\mathfrak{g}) \simeq \Omega \mathbb{T}^{\lie, \text{ex}}_{\mathfrak{g}}.
\end{align*}
Now consider $\mathscr{P} = \nuassoc\{-1\}$, so $\mathscr{P}^{\text{!`}} = \nccoassoc$. Again, let $A$ be a non-positive associative algebra. The formal internal automorphism group
\begin{align*}
    \underline{\faut}_{\nuassoc\{-1\}}(A[1])|_{\alg^{\text{Art}}_{\nuassoc}}
\end{align*}
is an associative formal group corresponding to the $\mathbb{E}_1$-algebra in non-unital associative algebras $\mathfrak{Z}_{\nuassoc\{-1\}}(A[1])[-1] \simeq \mathfrak{Z}_{\nuassoc}(A)$. The map $\varphi: \nccocomm \rightarrow \nccoassoc$ corresponds to the canonical map $\varphi^{\vee}: \nuassoc \rightarrow \nucomm$, and the map $(\varphi^{\vee})^! = \lie \rightarrow \nuassoc$ sending an associative algebra to its commutator Lie algebra. By Corollary \ref{cor2}, we get an equivalence of $\mathbb{E}_1$-algebras in Lie algebras
\begin{align*}
    ((\varphi^{\vee})^!)^{\ast} \mathfrak{Z}_{\nuassoc}(A) \simeq \Omega \mathbb{T}^{\nuassoc, \text{ex}}_A. 
\end{align*}
We can consider the $\mathbb{E}_1$-algebra in non-unital associative algebras $\mathfrak{Z}(A)$ as a non-unital $\mathbb{E}_2$-algebra using the Dunn additivity map $\rho^{\ast}$ defined in (\ref{eq2}). Let $R_{\mathbb{E}_2}$ denote the map sending an $\mathbb{E}_2$-algebra to its underlying Lie algebra. Then, by Theorem \ref{thm1}, we get an equivalence of Lie algebras
\begin{align*}
    R_{\mathbb{E}_2}((\rho^{\ast})^{-1} \mathfrak{Z}(A)) \simeq \mathbb{T}^{\nuassoc, \text{ex }}_A.
\end{align*}
In other words, the Lie algebra structure underlying the $\mathbb{E}_1$-center of $A$ agrees with the Lie algebra structure on the extended operadic tangent complex of $A$, and we recover the morphism of Lie algebras
\begin{align*}
    \mathbb{T}^{\nuassoc}_A \rightarrow \mathfrak{Z}_{\nuassoc}(A)[1]
\end{align*}
described in \cite{FR} for the $\mathbb{E}_1$-case.\displaypar
Finally, consider $\mathscr{P} = \mathbb{P}_n\{-n\}$, so $\mathscr{P}^{\text{!`}} = \text{co}\mathbb{P}_n$. Let $A$ be a non-positive $\mathbb{P}_n$-algebra. The formal internal automorphism group 
\begin{align*}
    \underline{\faut}_{\mathbb{P}_n\{-n\}}(A[n])|_{\alg^{\text{Art}}_{\mathbb{P}_n}}
\end{align*}
is a $\mathbb{P}_n$-formal group corresponding to the $\mathbb{E}_1$-algebra in $\mathbb{P}_n\{1-n\}$-algebras $\mathfrak{Z}_{\mathbb{P}_n\{-n\}}(A[n])[-1]\simeq \mathfrak{Z}_{\mathbb{P}_n}(A)[n-1]$. \displaypar
The map $\varphi: \nccocomm \rightarrow \text{co}\mathbb{P}_n$ considers a non-counital cocommutative coalgebra as a non-counital $\text{co}\mathbb{P}_n$-coalgebra with zero cobracket, consequently the dual $\varphi^{\vee}: \mathbb{P}_n \rightarrow \nucomm$ considers a non-unital commutative algebra as a shifted Poisson algebra with zero bracket. The Koszul dual map $(\varphi^{\vee})^!: \lie \rightarrow \mathbb{P}_n\{1-n\}$ forgets the commutative product on the $(n-1)$-shifted version of a $\mathbb{P}_n$-algebra. Again by Corollary \ref{cor2}, we get an equivalence of $\mathbb{E}_1$-algebras in Lie algebras
\begin{align*}
    ((\varphi^{\vee})^!)^{\ast}(\mathfrak{Z}_{\mathbb{P}_n}(A)[n-1]) \simeq \Omega \mathbb{T}^{\mathbb{P}_n, \text{ex}}_A. 
\end{align*}
\begin{remark}
By \cite[Proposition 2.21]{Saf}, Poisson additivity is compatible with Lie additivity. In particular, consider the $\mathbb{P}_{n+1}$-algebra $\textnormal{add}(\mathfrak{Z}_{\mathbb{P}_n}(A))$ corresponding to the $\mathbb{E}_1$-algebra in $\mathbb{P}_n$-algebras $\mathfrak{Z}_{\mathbb{P}_n}(A)$ under Poisson additivity. Then $\textnormal{add}(\mathfrak{Z}_{\mathbb{P}_n}(A))[n]$ admits an underlying Lie algebra, which agrees with $\mathbb{T}^{\mathbb{P}_n,\textnormal{ex}}_A$.
\end{remark}
In the shifted Poisson case, we can actually prove a stronger result, which follows as a corollary of the following result of Tamarkin.
\begin{proposition}[Proposition 3.2 \cite{Tam4}]
Let $A\in \alg_{\mathbb{P}_n}(\ch(\field))$. The functor $\mathcal{G}_A:(\ccoalg_{\textnormal{co}\mathbb{P}_n}(\ch(\field)))^{\textnormal{op}} \rightarrow \mathbf{Set}$,
\begin{align*} 
    S \mapsto \hom_{\ccoalg_{\textnormal{co}\mathbb{P}_n}(\ch(\field))}(S \otimes_{\textnormal{co}\mathbb{P}_n} \textnormal{Bar}(A[n]), \textnormal{Bar}(A[n])) \times_{\hom_{\ccoalg_{\textnormal{co}\mathbb{P}_n}(\ch(\field))}(\textnormal{Bar}(A[n]), \textnormal{Bar}(A[n]))} \{\textnormal{id}_{\textnormal{Bar}(A[n])}\}
\end{align*}
is represented by $\textnormal{Bar}(\hom_{\field}(\textnormal{Bar}(A[n])^+,A)[n])$, where $\hom_{\field}(\textnormal{Bar}(A[n])^+,A)$ is equipped with the convolution $\mathbb{P}_n$-algebra structure coming from $\textnormal{Bar}(A[n])^+$ being a (counital) $\textnormal{co}\mathbb{P}_n$-coalgebra and $A$ a $\mathbb{P}_n$-algebra, and with differential twisted by the Maurer--Cartan element corresponding to $\textnormal{id}_{\textnormal{Bar}(A[n])}$.
\end{proposition}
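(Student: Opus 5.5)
The plan is to reduce the representability statement to the universal property of the bar construction as a cofree conilpotent coalgebra, in the form of twisting morphisms. Write $C := \textnormal{Bar}(A[n])$ and $\mathfrak{h} := \hom_{\field}(C^+, A)[n]$ with its twisted convolution structure. By \cite[Proposition 11.3.2]{LV} (applied to the Koszul twisting morphism for $\mathbb{P}_n\{-n\}$), morphisms of conilpotent $\textnormal{co}\mathbb{P}_n$-coalgebras $D \rightarrow \textnormal{Bar}(B)$ are in natural bijection with twisting morphisms $D \rightarrow B$, i.e. Maurer--Cartan elements of $\textnormal{Conv}(D,B)$. So I would compute both sides of the claimed representability as sets of Maurer--Cartan elements and match them.

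First, for $S$ a conilpotent $\textnormal{co}\mathbb{P}_n$-coalgebra, the left-hand side $\hom(S,\textnormal{Bar}(\mathfrak{h}[-n]\,[n]))$, after the shift bookkeeping, becomes the set of twisting morphisms $S \rightarrow \hom_{\field}(C^+,A)$ into the twisted convolution $\mathbb{P}_n$-algebra. Such a twisting morphism is a linear map $S \rightarrow \hom_{\field}(C^+,A)$, equivalently by adjunction a linear map $S \otimes C^+ \rightarrow A$ (with appropriate shifts), satisfying the Maurer--Cartan equation in the convolution algebra twisted by $\phi$, the element corresponding to $\textnormal{id}_C$.

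Second, for the right-hand side, use that $S \otimes_{\textnormal{co}\mathbb{P}_n} C$ is, as in Proposition \ref{prop3}, the reduced part $\overline{S^+ \otimes C^+}$ with the Hopf tensor product coalgebra structure. A morphism $\overline{S^+\otimes C^+} \rightarrow C = \textnormal{Bar}(A[n])$ is a twisting morphism $\overline{S^+\otimes C^+} \rightarrow A[n]$, i.e. a linear map on $S\otimes C \oplus S \oplus C$. The fiber condition over $\textnormal{id}_C$ fixes the component on $C$ to be the projection $\kappa$-part of $\textnormal{id}_C$, so the free data is a linear map on $S \otimes C^+ = S\oplus S\otimes C$, exactly matching the data in the first step. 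It then remains to check that the Maurer--Cartan equation for the full twisting morphism on $\overline{S^+\otimes C^+}$, with the $C$-component fixed to $\phi$, is the Maurer--Cartan equation in $\textnormal{Conv}(S,\hom(C^+,A))^{\phi}$: this amounts to the identity $\textnormal{Conv}(S^+\otimes C^+, A) \cong \textnormal{Conv}(S^+, \hom(C^+,A))$ of convolution algebras, which holds because the convolution $\mathbb{P}_n$-structure on $\hom(C^+,A)$ is defined precisely via the Hopf (Hadamard) structure of $(\textnormal{co}\mathbb{P}_n)^{\textnormal{cu}}$, together with expanding the MC element as $\phi + \alpha$ and noting the terms involving only $C$ give the MC equation of $\phi$ itself. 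Naturality in $S$ is clear since all bijections are natural.

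The main obstacle is the last identification: verifying that the decomposition of the Maurer--Cartan equation for $\phi+\alpha$ on $\overline{S^+\otimes C^+}$ reproduces exactly the $\phi$-twisted convolution bracket and product, including the components involving the counit of $S^+$ (the $S\otimes 1$ summand) and the signs from the shifts by $n$. I would handle this by first treating the case of the commutative and Lie parts of $\textnormal{co}\mathbb{P}_n$ separately (the Hopf structure is componentwise: Hadamard product of $\textnormal{coCom}$ with itself, and the cobracket acts as a derivation), reducing to the two standard formulas for convolution of cocommutative coalgebras with commutative algebras and of Lie coalgebras with Lie algebras, and then checking the Leibniz compatibility.
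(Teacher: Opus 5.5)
The paper gives no proof of this statement. It is quoted from Tamarkin \cite{Tam4} and used as a black box. The only consequence used later, in the proof of Theorem~\ref{thm3}, is that the universal action is determined on cogenerators by $p\alpha(f,x)=\pi(f)(x)$, which is exactly your adjunction $\hom(S,\hom_{\field}(C^+,A[n]))\cong\hom(S\otimes C^+,A[n])$.

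Your argument is the standard cofree-coalgebra/twisting-morphism proof, and it is correct. Write the twisting morphism on $\overline{S^+\otimes C^+}$ as $\tau=p\epsilon+\alpha$, where $p$ is your $\phi$ and $\epsilon$ is the counit of $S^+$. The Maurer--Cartan equation for $\tau$ then splits into two parts:
\begin{itemize}
\item the equation for $p$, which holds automatically because $1\otimes C$ is a subcoalgebra;
\item the equation for $\alpha$ in $\textnormal{Conv}(S,\mathfrak{h})$.
\end{itemize}

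When you write it out, make explicit the two places where the Poisson operad is actually used.

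\textbf{Matching the brackets.} The Koszul twisting morphism of $\mathbb{P}_n\{-n\}$ pairs the coproduct of a $\textnormal{co}\mathbb{P}_n$-coalgebra with the bracket, and the cobracket with the product. The Hopf diagonal sends the bracket to $\beta\otimes\mu+\mu\otimes\beta$. Hence, up to the shift by $n$, the Koszul convolution bracket on $\hom_{\field}(D,B)$ is the bracket of the Hopf convolution $\mathbb{P}_n$-algebra on $\hom_{\field}(D^+,B)$, restricted to maps vanishing on $1$. This is what identifies twisting morphisms $S\to\mathfrak{h}$ with Maurer--Cartan elements for the convolution structure in the statement.

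It also means your final sign check is not a superposition of a coCom/Com and a coLie/Lie computation. On $\overline{S^+\otimes C^+}$ only coproduct/bracket and cobracket/product pairings occur, with $\delta_{S^+\otimes C^+}=\delta_{S^+}\otimes\Delta_{C^+}+\Delta_{S^+}\otimes\delta_{C^+}$. The coCom/Com pairing enters only inside the product of $\mathfrak{h}$, and no coLie/Lie pairing appears at all.

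\textbf{The cross terms.} We have $\Delta_{S^+}(s)=s\otimes 1+1\otimes s+\bar\Delta(s)$, while $\delta_{S^+}(S)\subset S\otimes S$ (dual to $[1,-]=0$ in the unital Poisson operad). Consequently:
\begin{itemize}
\item the $p\otimes\alpha$ terms are exactly $[p,\alpha(-)]_{\mathfrak{h}}$, i.e.\ the twist of the differential;
\item the $\alpha\otimes\alpha$ terms involve only $\bar\Delta_S$ and $\delta_S$, i.e.\ the convolution bracket and product of $\textnormal{Conv}(S,\mathfrak{h})$.
\end{itemize}
Finally, coassociativity of the Hopf diagonal supplies the isomorphism $\hom_{\field}(S^+\otimes C^+,A)\cong\hom_{\field}(S^+,\hom_{\field}(C^+,A))$ of convolution algebras that you invoke.
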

\begin{corollary}
We have an equivalence of $\textnormal{co}\mathbb{P}_n$-coalgebras
\begin{align*}
    \mathfrak{Z}(\textnormal{id}_{\textnormal{Bar}(A[n])}) \simeq \textnormal{Bar}(\hom_{\field}(\textnormal{Bar}(A[n])^+,A)[n])
\end{align*}
\end{corollary}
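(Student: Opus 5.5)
The plan is to compare two representability statements: the strict one in Tamarkin's proposition, and the $\infty$-categorical one in Proposition \ref{prop4}. Proposition \ref{prop4} says that $\mathfrak{Z}(\textnormal{id}_{\textnormal{Bar}(A[n])})$ represents $\underline{\faut}_{\mathbb{P}_n\{-n\}}(A[n])$ on $\ccoalg_{\textnormal{co}\mathbb{P}_n}(\ch(\field))[W_{\textnormal{Kos}}^{-1}]$. Set $C := \textnormal{Bar}(\hom_{\field}(\textnormal{Bar}(A[n])^+,A)[n])$. It then suffices to produce, naturally in $S$, an equivalence
\begin{align*}
\map_{\ccoalg_{\textnormal{co}\mathbb{P}_n}[W_{\textnormal{Kos}}^{-1}]}(S, C) \simeq \underline{\faut}_{\mathbb{P}_n\{-n\}}(A[n])(S),
\end{align*}
and conclude by the Yoneda lemma. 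I will work in the slice language of the proof of Proposition \ref{prop1}. There, $l_A$ is left Quillen, and by Lemma \ref{lem2} its derived right adjoint evaluated at $\textnormal{id}_{\textnormal{Bar}(A[n])}$ is the centralizer. Tamarkin's proposition says that the strict right adjoint $r_A$ of $l_A$ satisfies $r_A(\textnormal{id}_{\textnormal{Bar}(A[n])}) \cong C$, since the strict functor $\mathcal{G}_A$ is exactly $\hom(l_A(-), \textnormal{id}_{\textnormal{Bar}(A[n])})$.

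The key step is to show that the derived right adjoint $R_A$ at $\textnormal{id}_{\textnormal{Bar}(A[n])}$ is computed by the strict $r_A$. By the standard theory of Quillen adjunctions, $R_A$ is obtained by applying $r_A$ to a fibrant replacement. So I must check that $\textnormal{id}_{\textnormal{Bar}(A[n])}$ is fibrant in the slice model structure on $\ccoalg_{\textnormal{co}\mathbb{P}_n}(\ch(\field))_{\textnormal{Bar}(A[n])/}$. Fibrancy in an under-slice is fibrancy of the underlying object, so this reduces to $\textnormal{Bar}(A[n])$ being fibrant in Vallette's model structure. That holds because bar constructions of algebras are the fibrant objects: every fibrant coalgebra is a retract of $\textnormal{Bar}\,\textnormal{Cobar}$ of something, and conversely $\textnormal{Bar}$ is right Quillen and every algebra is fibrant. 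The paper already uses this in the proof of Proposition \ref{prop3} ("since $\textnormal{id}_{\textnormal{Bar}(A)}$ is fibrant"). Hence $R_A(\textnormal{id}) \simeq r_A(\textnormal{id}) \cong C$. The resulting equivalence $\mathfrak{Z}(\textnormal{id}_{\textnormal{Bar}(A[n])}) \simeq C$ holds in $\ccoalg_{\textnormal{co}\mathbb{P}_n}[W_{\textnormal{Kos}}^{-1}]$, that is, as $\textnormal{co}\mathbb{P}_n$-coalgebras up to Koszul weak equivalence.

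One further point needs checking: the tensor product $\otimes_{\textnormal{co}\mathbb{P}_n}$ in Tamarkin's statement must be the one coming from the admissible Hopf counital structure of \cite[Corollary 1.19]{Saf}, so that the strict functor $l_A$ in Tamarkin agrees with ours. I expect this identification of tensor products to be the main obstacle, since it is a matter of matching conventions rather than a formal argument. My first attempt would be to compare both tensor products on cofree coalgebras $\textnormal{co}\mathbb{P}_n(V)$, where both are determined by the Hopf structure on $\textnormal{co}\mathbb{P}_n^{\textnormal{cu}}$ induced from the commutative/Lie Hopf structures.
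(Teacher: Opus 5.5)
Your proposal is correct and matches the paper's proof: you write $\mathcal{G}_A$ as $\hom(l_{A[n]}(-),\textnormal{id}_{\textnormal{Bar}(A[n])})$, so Tamarkin's result computes the strict right adjoint at $\textnormal{id}_{\textnormal{Bar}(A[n])}$, and then you use fibrancy of $\textnormal{Bar}(A[n])$ to identify this with the derived right adjoint, hence (via Proposition \ref{prop4}) with the centralizer. The tensor-product compatibility you flag is not a real obstacle: the paper takes it as given from the cited result of \cite{Saf} that the admissible Hopf counital structure on $\textnormal{co}\mathbb{P}_n$ induces the usual tensor product.
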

\begin{proof}
As in the proof of Proposition \ref{prop1}, let $l_{A[n]}:\ccoalg_{\text{co}\mathbb{P}_n}(\ch(\field))_{0/} \rightarrow \ccoalg_{\text{co}\mathbb{P}_n}(\ch(\field))_{\text{Bar}(A[n])/}$ denote the functor
$(0 \rightarrow S) \mapsto (\text{Bar}(A[n]) \rightarrow S\otimes_{\text{co}\mathbb{P}_n} \text{Bar}(A[n]))$. The functor $\mathcal{G}_A$ is given by $S\mapsto \hom_{\ccoalg_{\text{co}\mathbb{P}_n}(\ch(\field))_{\text{Bar}(A[n])/}}({l}_{A[n]}(S), \text{id}_{\text{Bar}(A[n])})$. By the above theorem, the right adjoint $r_{A[n]}$ satisfies $r_{A[n]}(\text{id}_{\text{Bar}(A[n])}) \simeq \textnormal{Bar}(\hom_{\field}(\textnormal{Bar}(A[n])^+,A)[n])$, and since $\text{Bar}(A[n])$ is fibrant, the same holds for the induced right adjoint on $\infty$-categories $R_{A[n]}$. In particular, $\textnormal{Bar}(\hom_{\field}(\textnormal{Bar}(A[n])^+,A)[n])$ represents the formal internal automorphism group of $A[n]$, and is therefore equivalent to the centralizer of $\text{id}_{\text{Bar}(A[n])}$.
\end{proof}
By giving the representing object $\textnormal{Bar}(\hom_{\field}(\textnormal{Bar}(A[n])^+,A)[n])$ the $\mathbb{E}_1$-algebra structure induced by composition, this equivalence extends to an equivalence of $\mathbb{E}_1$-algebras in $\text{co}\mathbb{P}_n$-coalgebras
\begin{align*}
    \mathfrak{Z}(\text{Bar}(A[n])) \simeq \textnormal{Bar}(\hom_{\field}(\textnormal{Bar}(A[n])^+,A)[n]).
\end{align*}
\begin{theorem}\label{thm3}
Under Poisson additivity, the $\mathbb{E}_1$-algebra in $\mathbb{P}_n$-algebras given by the center $\mathfrak{Z}(A)$ corresponds to the homotopy $\mathbb{P}_{n+1}$-algebra structure on the Melani-Safronov center as defined in \cite[Definition 3.7]{MS} of $A$ corresponding to the canonical $\textnormal{Br}_{\textnormal{co}\mathbb{P}_n}$-algebra structure on its $[n]$-shift under the equivalence $\textnormal{Br}_{\textnormal{co}\mathbb{P}_n} \simeq \mathbb{P}_{n+1}\{-n\}$ of \cite{CW}.
\end{theorem}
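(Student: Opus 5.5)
The proof will compare two $\mathbb{E}_1$-structures on the same underlying $\mathbb{P}_n$-algebra, both seen through Bar constructions. By the preceding Corollary and the remark after it, we have an equivalence of $\mathbb{E}_1$-algebras in $\textnormal{co}\mathbb{P}_n$-coalgebras
\begin{align*}
\mathfrak{Z}(\textnormal{Bar}(A[n])) \simeq \textnormal{Bar}(\hom_{\field}(\textnormal{Bar}(A[n])^+,A)[n]),
\end{align*}
where the right-hand side carries the $\mathbb{E}_1$-structure induced by composition of the universal action. Applying the symmetric monoidal Bar--Cobar equivalence, as in the last step of the proof of Theorem \ref{thm21}, identifies $\mathfrak{Z}(A[n])$, and after shifting $\mathfrak{Z}(A)$, with the convolution $\mathbb{P}_n$-algebra $\hom_{\field}(\textnormal{Bar}(A[n])^+,A)$ with twisted differential. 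This is exactly the underlying $\mathbb{P}_n$-algebra of the Melani--Safronov center of \cite[Definition 3.7]{MS}. It remains to show that the $\mathbb{E}_1$-structures agree once both are transported to $\mathbb{P}_{n+1}$-structures.

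\textbf{Step 1: strict model of the $\mathbb{E}_1$-structure.} I would make the $\mathbb{E}_1$-structure explicit at the point-set level. The composition map
\begin{align*}
\mathcal{G}_A(T)\times\mathcal{G}_A(S)\to\mathcal{G}_A(T\otimes_{\textnormal{co}\mathbb{P}_n}S)
\end{align*}
on Tamarkin's set-valued functor is strictly associative and unital. By Yoneda it therefore gives a strict associative monoid structure on the representing coalgebra $C:=\textnormal{Bar}(\hom_{\field}(\textnormal{Bar}(A[n])^+,A)[n])$ in the strict monoidal category $(\ccoalg_{\textnormal{co}\mathbb{P}_n}(\ch(\field)),\otimes_{\textnormal{co}\mathbb{P}_n})$. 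Since the tensor product is admissible and every coalgebra is cofibrant, this strict monoid presents the $\mathbb{E}_1$-algebra $\mathfrak{Z}(\textnormal{Bar}(A[n]))$ in the localization. The multiplication $C\otimes_{\textnormal{co}\mathbb{P}_n}C\to C$ is a coalgebra map into a cofree conilpotent coalgebra, so it is determined by its corestriction
\begin{align*}
C\otimes_{\textnormal{co}\mathbb{P}_n}C\to\hom_{\field}(\textnormal{Bar}(A[n])^+,A)[n].
\end{align*}
I would compute this corestriction by evaluating the composite of universal actions and projecting to cogenerators. The expected result is a family of operations $\hom^{\otimes k}\otimes\hom\to\hom$ given by substituting $k$ convolution elements into one, that is, brace-type insertions along the $\textnormal{co}\mathbb{P}_n$-decomposition of $\textnormal{Bar}(A[n])^+$.

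\textbf{Step 2: comparison with Melani--Safronov.} Next I would check that these operations are precisely the $\textnormal{Br}_{\textnormal{co}\mathbb{P}_n}$-operations of \cite{MS} on the shifted convolution algebra. Both are defined by composing coderivations or cofree maps of $\textnormal{Bar}(A[n])^+$, so the identification should come down to matching signs and shifts. Then I would invoke Safronov's brace additivity \cite{Saf}. A $\textnormal{Br}_{\mathscr{P}^{\textnormal{!`}}}$-algebra structure on $B$ yields an $\mathbb{E}_1$-algebra in $\mathscr{P}$-algebras, presented as a strict monoid structure on $\textnormal{Bar}(B)$ whose corestriction is given by the brace operations. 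This is the same shape as Step 1. Finally, the equivalence $\textnormal{Br}_{\textnormal{co}\mathbb{P}_n}\simeq\mathbb{P}_{n+1}\{-n\}$ of \cite{CW} is compatible with Poisson additivity, by the construction in \cite{Saf} and \cite[Proposition 2.21]{Saf}. Composing these identifications gives the theorem.

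\textbf{Main obstacle.} The hard part is the explicit computation in Step 1: tracking the counital Hopf structure on $\textnormal{co}\mathbb{P}_n$ through the composite of universal actions, and matching the result, with all Koszul signs, against the brace formulas of \cite{MS} and the monoid structure of \cite{Saf}. My first attempt would be to check agreement on cogenerators in low arity, namely the binary part, which recovers the convolution product and bracket, and the pre-Lie insertion. I would then argue that both structures are determined by the same universal property of cofree coalgebra maps, so agreement on corestrictions suffices. A secondary technical point is justifying that the strict monoid in the 1-category presents the $\infty$-categorical $\mathbb{E}_1$-algebra. This should follow from the admissibility of the tensor product together with Lemma \ref{lem2}.
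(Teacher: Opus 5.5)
Your proposal takes the same route as the paper: identify Tamarkin's representing object with $\textnormal{Bar}(Z(A))$ for $Z(A)=\hom_{\field}(\textnormal{Bar}(A[n])^+,A[n])$, compare the composition product with Safronov's brace product by comparing corestrictions to cogenerators, and conclude from Safronov's compatibility of brace additivity with Poisson additivity (the paper cites \cite[Proposition 2.17]{Saf} for this step, not 2.21). Instead of checking low arities, the paper handles all arities at once. The formula $\pi\mu(f,c(g_1,\dots,g_r))(x)=f(\alpha(c(g_1,\dots,g_r),x))$, the cofreeness formula $\textnormal{pr}_m\alpha=(\textnormal{id}\otimes(p\alpha)^{\otimes m})\Delta_m$, and the vanishing $p\alpha(g,x)=\pi(g)(x)=0$ for $g$ of Bar arity $\geq 2$ together force each $g_i$ into a distinct slot of the coaction, which is exactly the brace formula.
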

\begin{proof}
Denote the $\text{Br}_{\text{co}\mathbb{P}_n}$-algebra given by the $[n]$-shift of the Melani-Safronov center by $Z(A)$. Recall that there is a forgetful functor $\alg_{\text{Br}_{\text{co}\mathbb{P}_n}}(\ch(\field)) \rightarrow \alg_{\Omega(\text{co}\mathbb{P}_n)}(\ch(\field))$. Then, by definition, as homotopy $\mathbb{P}_n\{-n\}$-algebras
\begin{align*}
   Z(A) \cong \mathbb{T}^{\mathbb{P}_n,\text{ex}}_A \cong \hom_{\field}(\text{Bar}(A[n])^+,A[n]).
\end{align*}
Applying brace additivity \cite[Proposition 2.8]{Saf}, the $\text{Br}_{\text{co}\mathbb{P}_n}$-algebra $Z(A)$ yields an $\mathbb{E}_1$-algebra in $\text{co}\mathbb{P}_n$-coalgebras given by $\text{Bar}(Z(A))$ equipped with the $\text{co}\mathbb{P}_n$-coalgebra structure coming from the Bar construction, and the associative product coming from the braces
\begin{align*}
    \text{co}\mathbb{P}_n^{\text{cu}}(r) \otimes Z(A) \otimes Z(A)^{\otimes r} &\rightarrow Z(A)\\
    (c, f, g_1,\dots, g_r) &\mapsto f\{c|g_1,\dots,g_r\}.
\end{align*}
The underlying $\text{co}\mathbb{P}_n$-coalgebra structure on Tamarkin's representing object is also given by $\text{Bar}(\hom_{\field}(\text{Bar}(A[n])^+,A)[n]) \cong \text{Bar}(Z(A))$. It thus suffices to show that the multiplication induced by composition on Tamarkin's representing object agrees with the brace-multiplication above. To this end, consider the universal action map 
\begin{align*}
    \alpha: \text{Bar}(\hom_{\field}(\text{Bar}(A[n])^+,A[n])) \otimes_{\text{co}\mathbb{P}_n} \text{Bar}(A[n]) \rightarrow \text{Bar}(A[n]).
\end{align*}
Let $p: \text{Bar}(A[n]) \rightarrow A[n]$ and $\pi: \text{Bar}(\hom_{\field}(\text{Bar}(A[n])^+,A[n])) \rightarrow \hom_{\field}(\text{Bar}(A[n])^+,A[n])$. Corestricting to cogenerators, for $f\in \text{Bar}(\hom_{\field}(\text{Bar}(A[n])^+,A[n]))$ and $x\in \text{Bar}(A[n])$, the universal action map satisfies
\begin{align*}
    p \alpha(f,x) = \pi(f)(x).
\end{align*}
The composition product $\mu$ is given by $\alpha(\mu(f,g),x) = \alpha(f, \alpha(g,x))$, and corestriction yields $\pi\mu(f,g)(x) = \pi(f)(\alpha(g,x))$. Let $f, g_1,\dots, g_r\in \hom_{\field}(\text{Bar}(A[n])^+,A[n])$ and $c\in \text{co}\mathbb{P}_n(r)$. Then the projection of the product of $f$ with $c(g_1,\dots,g_r)$ satisfies
\begin{align*}
    \pi \mu(f, c(g_1,\dots,g_r))(x) = f(\alpha(c(g_1,\dots,g_r),x)).
\end{align*}
Assume that $f$ and $g_i$ are supported in a single arity, say $m$ and $n_i$ respectively. Then $f(\alpha(c(g_1,\dots,g_r),x)) = f_m(\text{pr}_m \alpha(c(g_1,\dots,g_r),x))$. Since $\text{Bar}(A[n])$ is graded cofree, we get $\text{pr}_m\alpha = \left(\text{id}_{\text{co}\mathbb{P}_n(m)} \otimes (p\alpha)^{\otimes m}\right) \Delta_m$ with $\Delta$ the $\text{co}\mathbb{P}_n$-coaction on the source of $\alpha$. Since $p\alpha(g,x) = \pi(g)(x)$, we have $p\alpha(g,x) = 0$ if $g$ is supported in Bar arity $\geq 2$. Hence the only contributions to $\text{pr}_m \alpha(c(g_1,\dots,g_r),x)$ come from distributing the $g_i$ into distinct slots in the coaction. On those distributions, we recover precisely the brace action formula described in \cite[Section 2.1, Equation (3)]{MS} and \cite[Section 3.1]{CW}. The theorem then follows from \cite[Proposition 2.17]{Saf}.
\end{proof}
\begin{corollary}\label{cor3}
Let $A$ be a cofibrant commutative algebra concentrated in non-positive degrees. Then the classical $\mathbb{P}_2$-algebra structure on polyvector fields $T_{\textnormal{poly}}(A) := \hom_A(\sym_A \Omega_A^1[1],A)$ given by the Schouten bracket is equivalent to the homotopy $\mathbb{P}_2$-algebra $\textnormal{add}(\mathfrak{Z}_{\mathbb{P}_1}(A))$, where we view $A$ as a Poisson algebra with zero bracket.
\end{corollary}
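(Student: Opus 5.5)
The plan is to deduce the corollary from Theorem \ref{thm3} with $n=1$, and then identify the Melani--Safronov center of $A$ (with zero bracket) with $T_{\textnormal{poly}}(A)$ carrying the Schouten bracket. By Theorem \ref{thm3}, $\textnormal{add}(\mathfrak{Z}_{\mathbb{P}_1}(A))$ is the homotopy $\mathbb{P}_2$-algebra obtained from the canonical $\textnormal{Br}_{\textnormal{co}\mathbb{P}_1}$-algebra structure on $Z(A)=\hom_{\field}(\textnormal{Bar}(A[1])^+,A[1])$ through the equivalence $\textnormal{Br}_{\textnormal{co}\mathbb{P}_1}\simeq \mathbb{P}_2\{-1\}$ of \cite{CW}. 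It therefore suffices to exhibit a quasi-isomorphism of homotopy $\mathbb{P}_2$-algebras from $T_{\textnormal{poly}}(A)$ (suitably shifted) to $Z(A)[-1]$, equivalently to the Melani--Safronov center.

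First, compute the underlying complex. Since the bracket on $A$ vanishes, the Maurer--Cartan element $\phi$ only has components in the cocommutative part of $\textnormal{co}\mathbb{P}_1$, namely the multiplication. Hence
\[
\mathbb{T}^{\mathbb{P}_1,\textnormal{ex}}_A\cong \hom_{\field}(\textnormal{co}\mathbb{P}_1^{\textnormal{cu}}\circ A, A)
\]
with the twisted differential splits along the Lie-coalgebra direction. Its $\sym$-weight pieces are Harrison cochain complexes of $A$ with coefficients in $A$, tensored with symmetric powers in the cobracket direction. I would use the Poincar\'e--Birkhoff--Witt style decomposition $\textnormal{co}\mathbb{P}_1\cong \nccocomm\circ \textnormal{coLie}$ together with the fact that for cofibrant (hence quasi-smooth and formally smooth) $A$, Harrison cohomology computes $\hom_A(\Omega^1_A,A)$ in weight one and vanishes in higher weights. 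This yields a quasi-isomorphism $\hom_A(\sym_A\Omega^1_A[1],A)\to Z(A)[-1]$ sending a polyvector $\xi_1\cdots\xi_k$ to the cochain supported on the arity-$k$ coLie-trivial part, given by $a_1\otimes\cdots\otimes a_k\mapsto \sum\pm\,\xi_{\sigma(1)}(a_1)\cdots\xi_{\sigma(k)}(a_k)$. This is the analogue of the HKR map.

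Next, check compatibility with structure. The commutative product on $Z(A)$ arising from the cup-type part of the brace structure, with $c$ the coproduct in $\textnormal{co}\mathbb{P}_1(2)$, restricts on the image of this map to the wedge product of polyvectors. The Lie bracket underlying $\mathbb{P}_2$, which by the Remark after Corollary \ref{cor2} is the bracket on $\mathbb{T}^{\mathbb{P}_1,\textnormal{ex}}_A$ given by the commutator of coderivations, restricts to the Gerstenhaber-type commutator $f\{g\}\mp g\{f\}$. On derivations this is the commutator of vector fields, and by the Leibniz rule it extends to the Schouten bracket. One must also verify that the higher brace operations either vanish on HKR images or contribute only homotopies. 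A route that avoids chasing these homotopies is formality: the operations computed above already define a strict $\mathbb{P}_2$-structure on cohomology, and the HKR map is a strict morphism to a $\mathbb{P}_2\{-1\}$-model. Alternatively, one can work with $\Omega(\textnormal{co}\mathbb{P}_2)$-algebras, show that the $\infty$-morphism has linear part the HKR map, and conclude because $\infty$-quasi-isomorphisms are invertible in the homotopy category.

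The main obstacle is precisely this last step: making the equivalence $\textnormal{Br}_{\textnormal{co}\mathbb{P}_1}\simeq\mathbb{P}_2\{-1\}$ of \cite{CW} explicit enough to confirm that the transferred $\mathbb{P}_2$ operations on HKR images are the wedge product and the Schouten bracket, with no higher correction. The first thing I would try is a weight or grading argument. Put a grading on $Z(A)$ by polyvector degree and coLie weight; the HKR image is concentrated on the diagonal, and any higher operation of a $\mathbb{P}_2$-infinity structure would shift this diagonal weight. This forces the higher operations on the image to vanish up to homotopy, in the same way as in Tamarkin's and Kontsevich's proofs of Hochschild formality for the product alone.
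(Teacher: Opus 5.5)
Your first paragraph is exactly the paper's proof. Theorem~\ref{thm3} with $n=1$ identifies $\textnormal{add}(\mathfrak{Z}_{\mathbb{P}_1}(A))$ with the homotopy $\mathbb{P}_2$-algebra of the Melani--Safronov center. The paper then concludes by citing, as a known result, that for cofibrant $A$ with zero bracket this center is equivalent as a $\mathbb{P}_2$-algebra to $T_{\textnormal{poly}}(A)$ with wedge product and Schouten bracket. Everything after your first paragraph tries to reprove that input, and there the proposal has a genuine gap, at exactly the step you flag. Nothing you write shows that the homotopy $\mathbb{P}_2$-structure transported through $\textnormal{Br}_{\textnormal{co}\mathbb{P}_1}\simeq\mathbb{P}_2\{-1\}$ agrees with wedge and Schouten at chain level. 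The weight argument you sketch cannot supply this, for three reasons.

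First, the product of $A$ is encoded by the cobracket cogenerator of $\textnormal{co}\mathbb{P}_1$, not the cocommutative one. Koszul duality exchanges $\dgcomm$ and $\lie$, which is why the Harrison complexes sit along the coLie factor; you have conflated linear duality with the twisting morphism. Consequently the twisted differential raises coLie weight by one. CoLie weight is only a filtration on $Z(A)$, and your ``diagonal'' is not a graded piece of the complex. Only the polyvector ($\sym$-)weight is preserved, and only because the bracket vanishes.

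Second, a weight grading alone does not force higher operations, or higher components of an $\infty$-morphism, to vanish. For that you need purity: the weight-$k$ cohomology must be concentrated in a single degree. Purity holds for smooth classical $A$. It fails for a general cofibrant $A$ in non-positive degrees, where $\hom_A(\sym^k_A\Omega^1_A[1],A)$ has a nonzero differential and cohomology in many degrees. For the same reason, a formality statement about cohomology would not identify $Z(A)$ with the dg $\mathbb{P}_2$-algebra $T_{\textnormal{poly}}(A)$.

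Third, the HKR image is not closed under the full $\textnormal{Br}_{\textnormal{co}\mathbb{P}_1}$-structure. The convolution product and the commutator of the brace $f\{g\}$ do preserve cochains supported on $\sym(A[1])\subset\obar(A[1])^+$, and they restrict there to wedge and Schouten, so those observations of yours are fine. However, the convolution $\mathbb{P}_1$-bracket is the cobracket of $\obar(A[1])^+$ tensored with the product of $A$. It sends two vector fields $\xi,\eta$ to the Harrison $2$-cochain $(a,b)\mapsto\pm(\xi(a)\eta(b)+\xi(b)\eta(a))$. This is nonzero (take $\partial_x,\partial_y$ on $\field[x,y]$), and up to sign it is the Harrison coboundary of $\xi\circ\eta=\xi\{\eta\}$.

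So any comparison must be an $\infty$-quasi-isomorphism with genuinely nonzero higher components, and producing one is precisely the content of the result the paper cites. To close the gap, either invoke that comparison as the paper does, or construct such an $\infty$-morphism explicitly.
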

\begin{proof}
    This follows directly from the above theorem and the fact that polyvector fields agree with the Melani-Safronov center for cofibrant algebras.
\end{proof}
\begin{remark}
If $A$ is a smooth finitely presented classical commutative $\field$-algebra, we can make the same conclusion even without the cofibrancy hypothesis. Indeed, the Harrison complex is then quasi-isomorphic to the projective module of Kähler differentials, so the same comparison argument applies.
\end{remark}

%% file: appendix.tex
Let $\mathcal{C}^{\otimes}$ be a symmetric monoidal $\infty$-category. Then Lurie's version of the Dunn additivity theorem states \cite[Theorem 5.1.2.2]{HA} that there is an equivalence of $\infty$-categories
\begin{align}\label{eq2}
    \rho^{\ast}: \alg_{\mathbb{E}_{k+k'}}(\mathcal{C}) \xrightarrow{\simeq} \alg_{\mathbb{E}_k}(\alg_{\mathbb{E}_{k'}}(\mathcal{C})).
\end{align}
Here $\rho: \mathbb{E}_k^{\otimes} \times \mathbb{E}_{k'}^{\otimes} \rightarrow \mathbb{E}_{k+k'}^{\otimes}$ is the bifunctor of $\infty$-operads which is given explicitly on topological operads as Cartesian products of little squares (see \cite[Construction 5.1.2.1]{HA}).\displaypar
On the other hand, there exists a Lie additivity functor 
\begin{align*}
    \Omega:  \alg_{\lie}(\modu_{\field}) \xrightarrow{\simeq} \alg_{\mathbb{E}_1}(\alg_{\lie}(\modu_{\field}))
\end{align*}
given by taking loops in the pointed Cartesian symmetric monoidal $\infty$-category of Lie algebras. In \cite{Saf}, Safronov identifies this with the brace additivity functor for the Lie operad. \displaypar
There is a forgetful functor $R_{\mathbb{E}_n}: \alg_{\mathbb{E}_n}(\modu_{\field}) \rightarrow \alg_{\lie}(\modu_{\field})$ sending an $\mathbb{E}_n$-algebra $A$ to the underlying Lie bracket on $A[n-1]$. This admits a left adjoint 
\begin{align*}
    U_{\mathbb{E}_n}: \alg_{\lie}(\modu_{\field}) \rightarrow \alg_{\mathbb{E}_n}(\modu_{\field}),
\end{align*}
which is the $\mathbb{E}_n$-universal enveloping algebra functor. This upgrades to a functor 
\begin{align*}
    U^{\text{Hopf}}_{\mathbb{E}_n}: \alg_{\lie}(\modu_{\field}) \rightarrow \alg_{\mathbb{E}_n}(\coalg_{\textbf{coCom}}(\modu_{\field})).
\end{align*}
\begin{theorem}[\cite{Kn}, \cite{ACBH}, \cite{GR2}]\label{thm2}
    The functor $U^{\textnormal{Hopf}}_{\mathbb{E}_n}$ is equivalent to $\alg_{\mathbb{E}_n}(C^{\textnormal{CE}}_{\ast}) \circ \Omega^{ n}$, with $C_{\ast}^{\textnormal{CE}}: \alg_{\lie}(\modu_{\field}) \rightarrow \coalg_{\textnormal{\textbf{coCom}}}(\modu_{\field})$ the Chevalley-Eilenberg chains functor.
\end{theorem}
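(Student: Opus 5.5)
The plan is to reduce to the case $n=1$ using Dunn additivity \eqref{eq2} on the coalgebra side and Lie additivity $\Omega$ on the Lie side, and to treat $n=1$ by a Koszul duality argument. Throughout, $\alg_{\lie}(\modu_{\field})$ carries the Cartesian structure, whose product is the direct sum. The Chevalley--Eilenberg chains functor
\begin{align*}
C^{\textnormal{CE}}_{\ast}: (\alg_{\lie}(\modu_{\field}),\oplus) \rightarrow (\coalg_{\textbf{coCom}}(\modu_{\field}),\otimes)
\end{align*}
is strong symmetric monoidal, since $C^{\textnormal{CE}}_{\ast}(\mathfrak{g}\oplus\mathfrak{h}) \simeq C^{\textnormal{CE}}_{\ast}(\mathfrak{g})\otimes C^{\textnormal{CE}}_{\ast}(\mathfrak{h})$. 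Hence it induces functors $\alg_{\mathbb{E}_n}(C^{\textnormal{CE}}_{\ast})$ on $\mathbb{E}_n$-algebra objects. It also commutes with the Dunn equivalences $\rho^{\ast}$ on both sides, because $\rho^{\ast}$ is natural in symmetric monoidal functors. So the composite $\alg_{\mathbb{E}_n}(C^{\textnormal{CE}}_{\ast})\circ\Omega^n$ makes sense, and it factors as $n$ iterated applications of the $\mathbb{E}_1$-construction.

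For $n=1$, I would first show that $\alg_{\mathbb{E}_1}(C^{\textnormal{CE}}_{\ast})\circ\Omega$ has the universal property of $U^{\textnormal{Hopf}}_{\mathbb{E}_1}$ after forgetting the coalgebra structure. Concretely, I would build a natural map of $\mathbb{E}_1$-algebras $U_{\mathbb{E}_1}(\mathfrak{g}) \rightarrow C^{\textnormal{CE}}_{\ast}(\Omega\mathfrak{g})$ and check that it is an equivalence. The map comes from the adjunction. The underlying Lie algebra of $\Omega\mathfrak{g}=0\times_{\mathfrak{g}}0$ is $\mathfrak{g}[-1]$ with trivial bracket, so $C^{\textnormal{CE}}_{\ast}(\Omega\mathfrak{g})\simeq \sym(\mathfrak{g})$ as a coalgebra. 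The inclusion of cogenerators $\mathfrak{g}\rightarrow C^{\textnormal{CE}}_{\ast}(\Omega\mathfrak{g})$ is a Lie map for the commutator bracket, because the $\mathbb{E}_1$-structure on $\Omega\mathfrak{g}$ is the loop composition, and its linearization recovers the bracket of $\mathfrak{g}$. It is then an equivalence by PBW: both sides are $\sym(\mathfrak{g})$ as coalgebras, and one compares associated gradeds for the primitive filtration.

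A cleaner route, which I would try first, is to check the statement after applying the bar construction. We have $\obar_{\mathbb{E}_1}(U\mathfrak{g})\simeq C^{\textnormal{CE}}_{\ast}(\mathfrak{g})$. On the other side, $\obar_{\mathbb{E}_1}$ of $C^{\textnormal{CE}}_{\ast}(\Omega\mathfrak{g})$ is the geometric realization of $C^{\textnormal{CE}}_{\ast}((\Omega\mathfrak{g})^{\oplus\bullet})$, which equals $C^{\textnormal{CE}}_{\ast}(|B_\bullet\Omega\mathfrak{g}|)\simeq C^{\textnormal{CE}}_{\ast}(\mathfrak{g})$. This uses that $C^{\textnormal{CE}}_{\ast}$ preserves sifted colimits and that $\Omega$ and $B$ are inverse equivalences in the pointed setting. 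Since $\obar_{\mathbb{E}_1}$ is fully faithful on augmented, connected-enough $\mathbb{E}_1$-coalgebras, this identifies the two Hopf algebras. That uniqueness statement is part of the Gaitsgory--Rozenblyum framework and should be quoted.

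For general $n$, I would induct using the factorization $U^{\textnormal{Hopf}}_{\mathbb{E}_n}\simeq U^{\textnormal{Hopf}}_{\mathbb{E}_1}\circ\cdots$, read inside $\alg_{\mathbb{E}_{n-1}}$. The main obstacle is exactly this compatibility. One must identify the $\mathbb{E}_n$-enveloping algebra, transported along $\rho^{\ast}$, with the $\mathbb{E}_1$-enveloping construction applied internally to $\mathbb{E}_{n-1}$-objects, compatibly with $\Omega^n = \Omega\circ\Omega^{n-1}$. I would handle this by comparing right adjoints instead of left adjoints. The forgetful functors $R_{\mathbb{E}_n}$ compose correctly with $\rho^{\ast}$ up to the shift $[n-1]$, and passing to left adjoints then gives the factorization.
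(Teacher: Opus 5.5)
The paper does not prove this statement; it is quoted from \cite{Kn}, \cite{ACBH}, \cite{GR2}, and for $n=1$ it is the identification $\overline{U}\simeq C^{\textnormal{CE}}_\ast\circ\Omega$ of \cite{GR2}. Your argument therefore has to stand on its own, and it has a genuine gap at exactly the step you call the main obstacle. Write $F_n$ for $\alg_{\mathbb{E}_n}(C^{\textnormal{CE}}_\ast)\circ\Omega^n$ followed by the forgetful functor to $\alg_{\mathbb{E}_n}(\modu_{\field})$. Three inputs give $\rho^\ast\circ F_n\simeq\alg_{\mathbb{E}_1}(U_{\mathbb{E}_{n-1}})\circ\Omega$: naturality of $\rho^\ast$, the lemma $\rho^\ast\circ\Omega^{n}\simeq\alg_{\mathbb{E}_1}(\Omega^{n-1})\circ\Omega$ from Appendix~\ref{appendixA}, and the case $n-1$ as symmetric monoidal functors. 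Hence $F_n$ is left adjoint to $\Omega^{-1}\circ\alg_{\mathbb{E}_1}(R_{\mathbb{E}_{n-1}})\circ\rho^\ast$.

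So, given the case $n-1$, the case $n$ is \emph{equivalent} to the compatibility $\Omega\circ R_{\mathbb{E}_n}\simeq\alg_{\mathbb{E}_1}(R_{\mathbb{E}_{n-1}})\circ\rho^\ast$. Your sentence that the forgetful functors ``compose correctly with $\rho^\ast$ up to the shift $[n-1]$'' therefore just asserts the inductive step. It is also not a statement about shifts. It says that the Lie algebra of the restricted $\mathbb{E}_{n-1}$-algebra is the \emph{abelian} Lie algebra underlying $\Omega(R_{\mathbb{E}_n}A)$, and that the $\mathbb{E}_n$-bracket survives only in the loop $\mathbb{E}_1$-structure. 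For $n=2$ this compatibility is exactly Theorem~\ref{thm1}. The appendix runs your argument in the opposite direction, deducing it \emph{from} the present statement by comparing left adjoints, so you cannot borrow it. You would need an independent argument, for instance Knudsen's factorization-homology description of $U_{\mathbb{E}_n}$, which handles all $n$ at once rather than by induction.

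The base case is also incomplete.
\begin{itemize}
\item In your first route, the whole difficulty is the Lie map $\mathfrak{g}\to C^{\textnormal{CE}}_\ast(\Omega\mathfrak{g})$. Saying that the linearized loop composition ``recovers the bracket'' checks the binary bracket up to homotopy. It does not produce a morphism in $\alg_{\lie}(\modu_{\field})$. The PBW comparison via the primitive filtration also needs the adjoint algebra map to be filtered, which the adjunction does not provide.
\item Your second route relies on $\obar$ being fully faithful, and this fails without connectivity hypotheses. For the abelian Lie algebra $\field$ in degree $0$ one has $\cobar\,\obar(\field[x])\simeq\field[[x]]$. Indeed $U(\field)=\field[x]$ and the group algebra $\field[t^{\pm 1}]$ have equivalent bar constructions, although they are not equivalent as algebras.
\end{itemize}
So the second route proves the statement only for sufficiently connected $\mathfrak{g}$. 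The statement, however, is made (and used in Appendix~\ref{appendixA}) for all of $\alg_{\lie}(\modu_{\field})$. Removing the hypothesis by a sifted-colimit argument would need a natural comparison map defined on all Lie algebras, which is exactly what the first route failed to construct. Finally, the identification $\obar(U\mathfrak{g})\simeq C^{\textnormal{CE}}_\ast(\mathfrak{g})$ as coalgebras, natural in $\mathfrak{g}$, is itself a Koszul-duality theorem for $\lie\to\mathbb{E}_1$ that you would have to supply.
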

In particular, the functor $U_{\mathbb{E}_n}$ is symmetric monoidal, and therefore its right adjoint is lax symmetric monoidal. The goal of this appendix is to prove the following compatibility
\begin{theorem}\label{thm1}
The following diagram commutes
\begin{center}
    \begin{tikzcd}
\alg_{\mathbb{E}_2}(\modu_{\field}) \arrow[r, "\rho^{\ast}"] \arrow[d, "R_{\mathbb{E}_2}"'] & \alg_{\mathbb{E}_1}(\alg_{\mathbb{E}_1}(\modu_{\field})) \arrow[d, "\alg_{\mathbb{E}_1}(R_{\mathbb{E}_1})"] \\
\alg_{\lie}(\modu_{\field}) \arrow[r, "\Omega"']                                            & \alg_{\mathbb{E}_1}(\alg_{\lie}(\modu_{\field}))                                                           
\end{tikzcd}
\end{center}
To this end, we will prove that the corresponding diagram for the left adjoints commutes, i.e.
\begin{center}
    \begin{tikzcd}
\alg_{\mathbb{E}_2}(\modu_{\field}) \arrow[r, "\rho^{\ast}"]                   & \alg_{\mathbb{E}_1}(\alg_{\mathbb{E}_1}(\modu_{\field}))                                             \\
\alg_{\lie}(\modu_{\field}) \arrow[r, "\Omega"'] \arrow[u, "U_{\mathbb{E}_2}"] & \alg_{\mathbb{E}_1}(\alg_{\lie}(\modu_{\field})) \arrow[u, "\alg_{\mathbb{E}_1}(U_{\mathbb{E}_1})"']
\end{tikzcd}.
\end{center}
\begin{lemma}
Let $\mathcal{C}$ be a Cartesian symmetric monoidal $\infty$-category with finite limits. The following diagram commutes
\begin{center}
\begin{tikzcd}
\mathcal{C}_{\ast} \arrow[r, "\Omega^k"] \arrow[d, "\Omega^{k+k'}"'] & \alg_{\mathbb{E}_k}(\mathcal{C}) \arrow[d, "\alg_{\mathbb{E}_k}(\Omega^{k'})"] \\
\alg_{\mathbb{E}_{k+k'}}(\mathcal{C}) \arrow[r, "\rho^{\ast}"']      & \alg_{\mathbb{E}_k}(\alg_{\mathbb{E}_{k'}}(\mathcal{C}))                     
\end{tikzcd}
\end{center}
\end{lemma}
\begin{proof}
Let $X\in \mathcal{C}_{\ast}$ be a pointed object, and consider the two functors
\begin{align*}
    \mathbb{E}_k^{\otimes} \times \mathbb{E}_{k'}^{\otimes} &\xrightarrow{\rho} \mathbb{E}_{k+k'}^{\otimes} \xrightarrow{\Omega^{k+k'}(X)} \mathcal{C},\\
    \mathbb{E}_k^{\otimes} \times \mathbb{E}_{k'}^{\otimes} &\xrightarrow{(\alg_{\mathbb{E}_k}(\Omega^{k'})\circ \Omega^k)(X)} \mathcal{C}. 
\end{align*}
We need to show that these two functors agree. To this end, consider the $\mathbb{E}_r$-action on $\Omega^r X$ as in \cite[Remark 5.2.6.25]{HA}. We have 
\begin{align*}
    \Omega^r(X) \simeq \map_{\ast}(C_r^+, X)
\end{align*}
with $C_r = (0,1)^r$ the $r$-dimensional open square, and $C_r^+$ its one-point compactification. If $c: \coprod_{i=1}^n C_r \rightarrow C_r$ is an arity $n$ operation in $\mathbb{E}_r$, we have a corresponding collapse map $\delta_c: C_r^+ \rightarrow \bigvee_{i=1}^n C_r^+$. Then $c$ acts on $\Omega^r X$ as
\begin{align*}
    (\Omega^r X)^{\times n} &\simeq \map_{\ast}(C_r^+, X)^{\times n} \\&\simeq \map_{\ast}(\bigvee_{i=1}^n C_r^+, X) \\&\xrightarrow{\delta_c^{\ast}} \map_{\ast}(C_r^+,X) \simeq \Omega^r X.
\end{align*}
If $c\in \mathbb{E}_k(m)$ and $d\in \mathbb{E}_{k'}(n)$, then we get $\rho(c,d)\in \mathbb{E}_{k+k'}(mn)$ with collapse map
\begin{align*}
    \delta_{\rho(c,d)} \simeq \delta_c \wedge \delta_d: C_{k+k'}^+ \simeq (C_k \times C_{k'})^+ \simeq C_k^+ \wedge C_{k'}^+ \xrightarrow{\delta_c \wedge \delta_d} \bigvee_{i=1}^m \bigvee_{j=1}^n C_k^+ \wedge C_{k'}^+ \simeq \bigvee_{i=1}^{mn} C_{k+k'}^+.
\end{align*}
It hence suffices to show that on $(\alg_{\mathbb{E}_k}(\Omega^{k'}) \circ \Omega^k)(X)$, the pair $(c,d)$ acts as precomposition with $\delta_c\wedge \delta_d$. But this follows directly from the tensor-power adjunction on $\map_{\ast}(C_k^+, \map_{\ast}(C_{k'}^+,X))$ and the description of the $\mathbb{E}_r$-algebra structure on $\Omega^r X$.
\end{proof}
\begin{corollary}
Let $\mathfrak{g}$ be a Lie algebra. Then $\rho^{\ast}(\Omega^2 \mathfrak{g}) \simeq \alg_{\mathbb{E}_1}(\Omega)(\Omega \mathfrak{g})$.
\end{corollary}
\begin{proof}[Proof of Theorem \ref{thm1}]
By definition, we have 
\begin{align*}
    \rho^{\ast}_{\textbf{coCom}} \circ \alg_{\mathbb{E}_2}(C_{\ast}^{\text{CE}}) \simeq \alg_{\mathbb{E}_1}(\alg_{\mathbb{E}_1}(C_{\ast}^{\text{CE}})) \circ \rho^{\ast}_{\lie}.
\end{align*}
Applying $\alg_{\mathbb{E}_1}(\alg_{\mathbb{E}_1}(C_{\ast}^{\text{CE}}))$ to the above corollary, we hence get
\begin{align*}
    \rho^{\ast}_{\textbf{coCom}}\circ \alg_{\mathbb{E}_2}(C_{\ast}^{\text{CE}}) \circ \Omega^2 \simeq \alg_{\mathbb{E}_1}(\alg_{\mathbb{E}_1}(C_{\ast}^{\text{CE}})) \circ \rho^{\ast}_{\lie} \circ \Omega^2\simeq \alg_{\mathbb{E}_1}(\alg_{\mathbb{E}_1}(C_{\ast}^{\text{CE}}) \circ \Omega) \circ \Omega.
\end{align*}
By Theorem \ref{thm2}, we can identify $\alg_{\mathbb{E}_2}(C_{\ast}^{\text{CE}}) \circ \Omega^2 \simeq U_{\mathbb{E}_2}^{\text{Hopf}}$ and $\alg_{\mathbb{E}_1}(C_{\ast}^{\text{CE}}) \circ \Omega \simeq U_{\mathbb{E}_1}^{\text{Hopf}}$. Thus
\begin{align*}
    \rho^{\ast}_{\textbf{coCom}}\circ U_{\mathbb{E}_2}^{\text{Hopf}} \simeq \alg_{\mathbb{E}_1}(U^{\text{Hopf}}_{\mathbb{E}_1}) \circ \Omega.
\end{align*}
\end{proof}
\end{theorem}